\date{}
\dedicatory{To Gopal Prasad}
\newtheorem{theorem}{Theorem}[section]
\newtheorem{lemma}[theorem]{Lemma}
\newtheorem{corollary}[theorem]{Corollary}
\newtheorem{proposition}[theorem]{Proposition}
\newtheorem{conjecture}[theorem]{Conjecture}
\theoremstyle{definition}
\newtheorem{definition}[theorem]{Definition}
\newtheorem{example}[theorem]{Example}
\theoremstyle{remark}
\newtheorem{remark}[theorem]{Remark}
\numberwithin{equation}{section}
\newcommand{\calA}{\mathcal{A}}
\newcommand{\calB}{\mathcal{B}}
\newcommand{\calE}{\mathcal{E}}
\newcommand{\calF}{\mathcal{F}}
\newcommand{\calI}{\mathcal{I}}
\newcommand{\calL}{\mathcal{L}}
\newcommand{\calN}{\mathcal{N}}
\newcommand{\calO}{\mathcal{O}}
\newcommand{\calP}{\mathcal{P}}
\renewcommand{\epsilon}{\varepsilon}
\newcommand{\frakm}{\mathfrak{m}}
\newcommand{\frakc}{\mathfrak{c}}
\newcommand{\frakf}{\mathfrak{f}}
\newcommand{\frake}{\mathfrak{e}}
\newcommand{\frakC}{\mathfrak{C}}
\newcommand{\frakV}{\mathfrak{V}}
\newcommand{\sfO}{\mathsf{O}}
\newcommand{\sfU}{\mathsf{U}}
\newcommand{\bbA}{\mathbb{A}}
\newcommand{\bmu}{\boldsymbol{\mu}}
\newcommand{\bbF}{\mathbb{F}}
\newcommand{\bbP}{\mathbb{P}}
\newcommand{\bbV}{\mathbb{V}}
\newcommand{\bbZ}{\mathbb{Z}}
\newcommand{\bbG}{\mathbb{G}}
\newcommand{\bfA}{\mathbf{A}}
\newcommand{\bfF}{\mathbf{F}}
\newcommand{\bfU}{\mathbf{U}}
\newcommand{\bfe}{\mathbf{e}}
\newcommand{\bfV}{\mathbf{V}}
\newcommand{\bfG}{\mathbf{G}}
\newcommand{\bfpic}{\mathbf{Pic}}
\newcommand{\sep}{\textup{sep}}
\newcommand{\fib}{\textup{fib}}
\newcommand{\half}{\frac{1}{2}}
\newcommand{\Aut}{\textup{Aut}}
\newcommand{\Hom}{\textup{Hom}}
\newcommand{\GL}{\textup{GL}}
\newcommand{\im}{\textup{Im}}
\newcommand{\Pic}{\textup{Pic}}
\newcommand{\Ker}{\textup{Ker}}
\newcommand{\Spec}{\textup{Spec}}
\newcommand{\et}{\textup{et}}
\newcommand{\loc}{\operatorname{loc}}
\newcommand{\Proj}{\operatorname{Proj}}
\newcommand{\Br}{\operatorname{Br}}
\newcommand{\fl}{\operatorname{fl}}
\newcommand{\Sym}{\textup{Sym}}
\newcommand{\Tors}{\textup{Tors}}
\newcommand{\Der}{\textup{Der}}
\newcommand{\Lie}{\textup{Lie}}
\newcommand{\NS}{\textup{NS}}
\newcommand{\Sh}{\mbox{\usefont{T2A}{\rmdefault}{m}{n}\CYRSH}}
\newcommand{\id}{\textup{id}}
\newcommand{\Jac}{\textup{Jac}}
\newcommand{\bfJ}{\mathbf{J}}
\newcommand{\ga}{\mathbb{G}_{a,F}}
\newcommand{\pp}{\textrm{pp}}
\renewcommand{\bar}[1]{{\overline{#1}}}
\newcommand{\Gal}{\operatorname{Gal}\nolimits}
\renewcommand\emptyset\varnothing
\newcommand{\beq}{\begin{equation}}
\newcommand{\eeq}{\end{equation}}
\begin{document}
\title[Torsors]{Integral models and  torsors of inseparable forms of $\bbG_a$}

\author[Igor Dolgachev]{Igor Dolgachev}
\address{Department of Mathematics, University of Michigan, 525 E. University Av., Ann Arbor, Mi, 49109, USA}
\email{idolga@umich.edu}

\begin{abstract} 
After recalling some basic facts about  $F$-wound commutative unipotent algebraic groups over 
an imperfect field $F$ we 
study their regular integral models over  Dedekind schemes of positive characteristic and compute 
the group of isomorphisms classes of torsors of one-dimensional groups.
\end{abstract}

\maketitle
\setcounter{tocdepth}{1}
\setcounter{section}{-1}

\section{Introduction}\label{S1}  

In this paper we study integral models of unipotent commutative algebraic groups over 
the field $K$ of rational function of a Dedekind scheme $S$ over an algebraically closed 
field $\Bbbk$ of positive characteristic. 
The interest to  unipotent groups over imperfect fields  is related to the study of 
pseudo-reductive algebraic groups \cite{CGP} 
 and  quasi-elliptic fibrations on smooth projective surfaces \cite{BM}, \cite{CDL}. 
The general theory of such groups   was started by J. Tits \cite{Tits}. 
Its earlier appearance in algebraic geometry goes back to 
the  work of 
M. Rosenlicht \cite{Rosenlicht}.  
A one-dimensional unipotent group over $K$ not isomorphic to the additive group $\bbG_{a,K}$  
admits a regular (but not smooth)  compactification, and, via the theory of relative minimal models 
of two-dimensional schemes, can be realized as the generic fiber of a 
flat morphism $f:X\to S$ of regular schemes. In the special case when the genus of a regular compactification 
is equal to one and $S$ is a proper smooth curve over $\Bbbk$, this leads to quasi-elliptic algebraic 
surfaces that play an important role in the 
classification of algebraic  
surfaces over fields of positive characteristic. Their appearance  in the theory of Enriques and K3 surfaces 
in  small characteristic  is the main motivation for writing this paper. 

A most notorious example of a commutative unipotent algebraic group over a field $F$ is a 
vector group, a 
finite-dimensional vector space over $F$ equipped with a structure of an algebraic group over $F$ isomorphic to a 
direct sum of the additive group $\bbG_{a}$ over $F$. Over any base scheme $S$, one can consider its 
form in Zariski topology, a vector group scheme $\bbV(\calE)$ over
 a base scheme $S$ associated to a locally free sheaf 
$\calE$ of $\calO_S$-modules. Its value on any scheme $a:T\to S$ is the 
group of global sections
of the dual of the sheaf $a^*\calE$.  One can also consider forms of $\bbV(\calE)$ in  \'etale or flat 
Grothendieck 
topology. It is known that  there are 
no non-trivial forms of a vector 
group in \'etale topology, however there are forms in flat topology if $F$ is not a perfect field. 
A non-trivial form of  
$\bbG_{a,F}$  is an example of a 
 \emph{$F$-wound unipotent group} as defined by J. Tits \cite{Tits}: a unipotent  algebraic group which 
does not contain any subgroups isomorphic to $\bbG_{a,F}$. A commutative connected unipotent group $\sfU_K$ 
of dimension $r$ killed by multiplication by the characteristic $p$ of $F$ can be embedded in a vector group 
$\bbG_{a,F}^{r+1}$ as the 
zero locus of a $p$-polynomial $\Phi$ in $r+1$ variables, a sum of monomials   in one variable of degree power of $p$.
After adjoining to the field some $p^k$-power roots of the coefficients, one obtains a group 
isomorphic to a vector group, so it becomes a purely inseparable form of a vector group. 
In the case when $\sfU_K$ is one-dimensional one can choose $\Phi$ to be a Russell polynomial, a $p$-polynomial 
of the form
$\Phi = u^{p^n}+v+a_1v^p+\cdots +a_mv^{p^m}$. The minimal extension $F'/F$ such 
that the group becomes isomorphic to $\bbG_{a,F'}$ is equal to $F(a_1^{1/p^n},\ldots,a_m^{1/p^n})$. 
By giving appropriate weights to the variables $u,v$ one can homogenize the affine curve given by equation $\Phi(u,v) = 0$ in 
$\bbA_F^2$ to obtain a weighted homogenous compactification $\bar{\sfU}$ of 
degree $p^{\max\{m,n\}}$ in the weighted homogenous 
projective plane $\bbP(1,1,p^{ \min\{m,n\}})$. 
If the polynomial $u^{p^n}-a_mv^{p^m}$ is irreducible in $F$, the
compactification $\bar{\sfU}$  is a regular projective curve of arithmetic genus 
$\half (p^{\min\{m,n\}}-1)(p^{\max\{m,n\}}-2)$. If this is not the case,  one takes a normalization of this curve to arrive at a 
regular compactification of smaller arithmetic genus. 

We will be mostly interested in the case when the field $F = K$. In this case one can study 
integral models $\bfU$ of one-dimensional $K$-wound unipotent group $\sfU_K$ over schemes $S$ 
obtained by localizations or completions of a smooth 
algebraic curve $C$ 
over $\Bbbk$. The theory of relative minimal models allows us to realize $\bfU$  as an 
open subscheme  
in a regular projective scheme $X\to S$ whose generic fiber $X_K$ is a regular compactification of a 
$K$-wound unipotent  group $\sfU_K$ over $K$ of some genus $g$. If $g > 0$, the group scheme $\bfU$ is the N\'eron model of 
$\sfU_K$ and it is also isomorphic to a closed 
group subscheme of the N\'eron model $\bfJ$ of the Jacobian variety $\Jac(X_K)$ of $X_K$ 
\cite[9.5,Theorem 4]{Bosch}. 
We say that a $K$-wound unipotent group $\sfU_K$ is of genus $g$ if the arithmetic genus of $X_K$ is equal to $g$.  
 In the case $g = 1$, $\sfU_K$ coincides 
with its Jacobian variety and an integral model $X\to S$ is an example of a jacobian quasi-elliptic 
fibration on a surface. The surfaces admitting such fibration were  first studied by Bombieri and Mumford in \cite{BM}. 

The theory of N\'eron models of elliptic curves and, more generally, abelian varieties, 
has an old history and was 
extensively  studied in algebraic geometry and number theory. It is an important tool in the study of 
principal homogeneous spaces (torsors) of 
abelian varieties. The isomorphism classes of  
torsors of an abelian variety $A$  over $K$ form a commutative group, 
 the \emph{Weil-Ch\^{a}telet group} $\textrm{WC}(A,K)$ isomorphic to the group of Galois cohomology  $H^1(K,A)$ 
 identified with the group of \'etale cohomology over $\Spec(K)$ of the abelian sheaf represented by $A$.
 The usual local-to-global study of this group splits into three parts: the study of the group of torsors 
 over a local field $K_v$ obtained from $K$ by a formal completion with respect to 
 a discrete valuation $v$ of $K$, the study of the subgroup of $\textrm{WC}(A,K)$ of isomorphism classes 
 of torsors which are triviial 
 for all valuations $v$ of $K$, the \emph{Tate-Shafarevich group}  
 $\Sh(A,K)$, and finally the study of the group of obstructions for reconstructing the isomorphism class of a torsor from 
 its localizations. The group $\Sh(A,K)$ admits a realization as the group of \'etale cohomology 
 $H_{\et}^1(C,\bfA)$, where $\bfA$ is the N\'eron model of $A$ over  $C$ considered as an 
 abelian sheaf in \'etale topology. The group of obstructions is the group of \'etale cohomology 
 $H_{\et}^2(C,\bfA)$. The existence of N\'eron models of commutative $K$-wound unipotent groups allows one to use 
 the same approach to the study of their torsors over $K$. In the case of one-dimensional 
 groups over fields $K$ of characteristic  $p = 3$
  this study was 
 first initiated by W. Lang \cite{Lang} and has been extended to characteristic 2 in \cite{CDL}. In this paper we 
 treat the case of arbitrary genus $g$. 
 
 Here is the content of the paper.
  
 In Section 1 we remind some known general facts about commutative unipotent groups over imperfect 
 fields. Here we introduce their weighted homogenous compactifications and compute their canonical sheaves.
 
 In Section 2, we specialize to the case of one-dimensional groups and 
 introduce the notion of a unipotent algebraic group $\sfU$ of genus $g$. 
 We discuss a known classification 
 of Russell equations defining groups of genus 0 and 1. 
 As we have already mentioned earlier,  the Jacobian variety of a 
 regular compactification of a genus $g> 0$ curve is a commutative $F$-wound unipotent group of dimension $g$. Although 
 $\sfU$ is a $p$-torsion group, the group $\Jac(X)$ is a $p^s$-torsion, where $s$ is the 
 smallest positive 
 integer such that $\sfU_K$ splits over $K^{1/p^s}$ or become of genus $0$ \cite{Achet}. 
An interesting question for which we do not have  an answer is 
 whether $\sfU$ coincides with the $p$-torsion part of $\Jac(X)$.
 
 In Section 3 we study minimal models of a unipotent group of genus $g$  over a Dedekind scheme over a field of characteristic 
 $p > 0$. We give an explicit description of such a model over a global base as a closed subscheme of 
 a group scheme locally isomorphic to a vector group scheme over the base.
 
 In Section 4, we study integral models of regular compactifications of
  rational $K$-wound one-dimensional unipotent groups over the field $K$ of rational 
  functions of a smooth projective curve $C$. They are 
 isomorphic to minimal ruled surfaces $X\to  C$ with an inseparable  bisection as the boundary of an integral 
 model of the group.  
 For example, if $C\cong \bbP^1$, then $X \cong \bfF_1$ and  the 
 boundary is the pre-image of the strange conic in the plane with the pole at the center of the blow-up. 

 In Section 5 we begin the study of torsors of 
 unipotent groups of genus $g$ by considering the local case where  
 the ground field is the field of formal power series over an algebraically closed field 
 $\Bbbk$ of characteristic $p$.
 The Weil-Ch\^{a}telet group in this case is the quotient group $K/\Phi(K\oplus K)$, where $\Phi$ is the 
 $p$-polynomial defining the group. In spite of this simple realization the group is very difficult to 
 compute and we succeeded to do this only in the case $g=0$, where it is trivial and in the case 
 $g = 1$, where it was computed in the case $p = 3$ by W. Lang. An integral model $X\to \Spec(R)$  of this group over 
 the ring of integers $R = \Bbbk[[t]]$ is a genus one fibration with the closed fiber of multiplicity $p$. 
 The  important invariant of this fibration is the length $l(T)$ of the torsion part $T$ of the 
 $R$-module $H^1(X,\calO_X)$. A torsor is tame (resp. wild) when $T = 0$ (resp. $T\ne 0$). An open problem 
 which we have been unable to solve is to compute this invariant in terms of the Russell equation of the model 
 and compare it with the computation of this invariant in terms of the infinitesimal neighborhoods of the multiple fiber  
 given by Michel Raynaud in his unpublished work (see \cite[4.2]{CDL}).

 In Section 6 we treat the case of global fields, we compute the Tate-Shafarevich group 
 $H_{\et}^1(C,\bfU)$ and the obstruction group $H_{\et}^2(C,\bfU)$ of the N\'eron model $\bfU$ of $\sfU_K$ over a complete smooth curve $C$ with the field of rational functions 
 $K$.  We also discuss here a mysterious relationship between the rank of the elementary 
 abelian group of global sections $\bfU(C)$ and the $p$-rank of some hyperelliptic curves in 
 characteristic $p > 2$.    
 
 I thank T. Katsura, G. Martin, and M. Sch\"utt  for helpful comments on the  paper. I am thankful to the referee foa thor

 \section{Commutative unipotent algebraic groups over non-perfect fields} \label{S2}
 Throughout  this paper $F$ denotes a   
 field of characteristic $p > 0$. In this section  we will review some basic facts about 
 commutative unipotent algebraic groups over 
 $F$ with emphasis on the case when $F$ is imperfect. 
 There are several sources to refer to, e.g.
  \cite[Appendix B]{CGP}, \cite{Kambayashi}, \cite{Oesterle},  \cite{Tits}. 
 Recall that a linear algebraic group $\sfU$ over a field $F$ is called \emph{unipotent} 
 if it is isomorphic to a subgroup of the algebraic group of  
  upper triangular matrices over $F$ with  1 at the diagonal. 
 We will be interested only in commutative unipotent algebraic groups. 
There is a classification of commutative connected unipotent algebraic groups over a perfect field  $F$ 
  \cite[Chapter 5,\S 1]{Demazure}. No classification is known in the case when $F$ is imperfect.
 
 Each commutative unipotent algebraic group over $F$ is a 
 $p^n$-torsion group. It admits a composition 
 series with $p$-torsion quotients. A $p$-torsion group is isomorphic to 
 a codimension one subgroup of  a vector group $\bbG_{a,F}^{r+1}$ defined as the zero locus of 
 a \emph{$p$-polynomial}
  \beq\label{p-polynomial}
\Phi(x_1,\ldots,x_{r+1})  = \sum_{i=1}^{r+1}(\sum_{j=0}^{k_i}c_{j}^{(i)}x_i^{p^j}), \quad c_{k_i}^{(i)}\ne 0.
 \eeq
 (see \cite[Proposition B1.1.13]{CGP}).
The polynomial $\Phi^{\pp}: = \sum_{i=1}^{r+1}c_{k_i}^{(i)}x_i^{p^{k_i}}$ is called 
the \emph{principal part} of $\Phi$. 
We assume  that at least one of the coefficients $c_{0}^{(1)},\ldots,c_{0}^{r+1}$ is not zero. 
If this condition is satisfied we say that $\Phi$ is a \emph{separable $p$-polynomial}.
This condition guarantees that 
the closed subscheme $V(\Phi)$ of $\bbA_F^{r+1}$ is smooth, and hence $\sfU$ is a smooth affine group scheme.

If, for example, $c_0^{(i)} \ne 0$,  then the restriction to $\sfU = V(\Phi)$ of the projection $\ga^{r+1}$ to $\ga^{r}$ given by 
 $(x_1,\ldots,x_{r+1})\to (x_1,\ldots,x_{i-1},x_{i+1},\ldots,x_{r+1})$ is an \'etale isogeny of 
 degree $p^{k_i}$.

\begin{proposition} Let $G$ be a connected subgroup of $\ga^{r+1}, r\ge 1,$ given by a separable $p$-polynomial 
$\Phi$. There exists a  radical extension $F'$ of $F$  such that 
$G_{F'}\cong \bbG_{a,F'}^r$,
\end{proposition}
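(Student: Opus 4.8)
The plan is to reduce to the structure theory of unipotent groups over \emph{perfect} fields and then descend. First I would record the relevant properties of $G$: it is commutative and unipotent (being a subgroup of $\bbG_{a,F}^{r+1}$), it is smooth (this is exactly where separability of $\Phi$ is used, as noted just above the statement), it is connected by hypothesis, it is annihilated by $p$ (multiplication by $p$ is the zero endomorphism of $\bbG_{a,F}^{r+1}$, hence of $G$), and $\dim G = r$ (it is a smooth hypersurface $V(\Phi)\subset\bbA_F^{r+1}$, cut out by the nonzero polynomial $\Phi$).

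Next, base change to the perfect closure $F^{\mathrm{perf}}$. Over a perfect field, every smooth connected commutative unipotent group annihilated by $p$ is a vector group: this is part of the classification of unipotent groups over perfect fields (\cite[Ch.~5,\S1]{Demazure}, \cite[App.~B]{CGP}). Indeed, over a perfect field such a group is split, so it carries a filtration with successive quotients $\bbG_a$; by commutativity one reduces, by induction on the dimension, to the fact that an extension of $\bbG_a$ by a vector group which is itself annihilated by $p$ must split, and over a perfect field every extension of $\bbG_a$ by $\bbG_a$ annihilated by $p$ is trivial --- the nontrivial extensions (Witt-vector extensions, up to Frobenius twist and composition) are annihilated only by a higher power of $p$. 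Applying this to $G_{F^{\mathrm{perf}}}$, which is smooth, connected, commutative, unipotent, annihilated by $p$, and of dimension $r$, we get $G_{F^{\mathrm{perf}}}\cong\bbG_{a,F^{\mathrm{perf}}}^{r}$.

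Finally, descend. The field $F^{\mathrm{perf}}$ is the filtered union of its finite subextensions $F'/F$, each of which is purely inseparable, i.e. radical. Since $G$ and $\bbG_{a,F}^{r}$ are of finite presentation over $F$, an isomorphism of group schemes $G_{F^{\mathrm{perf}}}\cong\bbG_{a,F^{\mathrm{perf}}}^{r}$, its inverse, and the (finitely many) polynomial identities expressing that it is a homomorphism are all already defined over one such finite $F'$, by a standard limiting argument. Hence $G_{F'}\cong\bbG_{a,F'}^{r}$ over a finite radical extension; concretely, $F'$ may be taken to be $F$ with finitely many $p$-power roots of elements of $F$ adjoined --- those occurring among the coefficients $c_j^{(i)}$ and in the formulas defining the isomorphism --- which matches the description in the introduction.

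The only genuinely substantive ingredient is the perfect-field structure statement; the remaining steps are formal. I considered instead trying to write the isomorphism down by hand: after adjoining $p^m$-th roots of the coefficients of the principal part one can make $\Phi^{\pp}$ the $p^m$-th power of a linear form and, after an invertible linear change of the ambient coordinates, bring $\Phi$ to a normal form with a distinguished leading variable. But extracting an honest isomorphism with $\bbG_a^{r}$ from such a normal form essentially re-proves the perfect-field statement, and the isomorphism is in general not linear in the ambient coordinates (as Russell-type examples show), so I would not follow that path. It is also worth recording why one must pass all the way to $F^{\mathrm{perf}}$ and not merely to a splitting field of $\Phi$: a wound unipotent group remains wound over $F^{\mathrm{sep}}$, so a separable extension never produces the required trivialization.
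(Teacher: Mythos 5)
Your proof is correct, but it takes a genuinely different route from the paper's. The paper argues by induction on $r$ directly on the $p$-polynomial: after normalizing so that $\Phi = x_1+\Phi_1$ with $\Phi_1$ free of linear monomials, it adjoins the $p^j$-th roots of the coefficients of $\Phi_1$ to write $\Phi = x_1+\Phi_2^{p^s}$ with $\Phi_2$ separable over the enlarged field, uses the resulting surjection $G\to \bbG_a$ and its kernel to exhibit $G$ (over a finite radical extension) as an iterated extension of copies of $\bbG_a$, and then concludes with \cite[Lemma B.1.10]{CGP} that a split smooth connected commutative $p$-torsion group is a vector group. You instead pass all the way to $F^{\mathrm{perf}}$, invoke the perfect-field classification (which rests on that same Lemma B.1.10 together with splitness of smooth connected unipotent groups over perfect fields), and descend by a finite-presentation/limit argument. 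Both routes are valid and both hinge on the same final structural fact. What the paper's argument buys is an explicit splitting field --- adjoining $p^j$-th roots of the coefficients suffices --- which matters later in the paper (cf.\ Russell's theorem, which identifies the minimal splitting extension); your limit argument only yields \emph{some} finite radical subextension, so your closing parenthetical pinning down $F'$ concretely is an assertion your proof does not actually establish. What your argument buys is conceptual brevity: connectedness, smoothness, commutativity, $p$-torsion and dimension are read off at once, and the inductive extension bookkeeping is replaced by a single appeal to the perfect-field theory plus spreading out.
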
 

\begin{proof} Without loss of generality, we may assume that $c_0^{(1)} = 1$. Replacing $x_1$ with 
$x_1+c_0^{(2)}x_2+\cdots+c_0^{(r+1)}x_{r+1}$, we may write 
$\Phi = x_1+\Phi_1$, where $\Phi_1$ is a  $p$-polynomial with no monomials of degree $1$. 
Adding to $F$ all $p^{j}$-roots of the coefficients $c_j^{(i)}$ of $\Phi_1$, we obtain a finite purely inseparable  extension 
$F'$ of $F$ such that $\Phi = x_1+\Phi_2(x_1,\ldots,x_{r+1}) ^{p^s}$, where $\Phi_2$ is a separable 
$p$-polynomial over $F'$. Let $G\to \bbG_{a,F'}$ be the homomorphism of algebraic groups over $F'$ given by 
$(x_1,\ldots,x_{r+1}) \to \Phi_2(x_1,\ldots,x_{r+1}).$ Since $\Phi_2$ is separable, the homomorphism is 
surjective. Its kernel is isomorphic to 
$V(\Phi_2(0,x_2,\ldots,x_{r+1}))\subset \bbG_{a,F'}^{r}$. If $r=1$, its is an \'etale cover of $\bbG_a$, and hence 
is isomorphic to $\bbG_{a,F'}$. If $r > 1$, we apply induction on $r$, to obtain that $G$ is an extension of 
$\bbG_a^{r-1}$ and $\bbG_a$ (after replace $F'$ by some finite radical extension). Since $G$ is a connected 
$p$-torsion 
algebraic group, it must be isomorphic to a vector group $\bbG_{a}^r$ (see \cite[Lemma B1.10]{CGP}). 
\end{proof} 
 
We say that $G$ is split over an extension $F'$ if $G_{F'} \cong \bbG_{a,F'}$. Such $F'$ is called 
a \emph{splitting extension}.

It follows that  any smooth connected unipotent $p$-torsion algebraic group is a purely inseparable  
form of $\bbG_a^r$. 
  There are no non-trivial 
 separable forms \cite[\S 2]{Kambayashi}. Note that for $r > 1$ this fact does not follow from the 
 usual Galois cohomology argument because $\Aut(\bbG_a^r)$ is strictly larger than 
 $\GL_F(r)$.
 
It is well known that over a perfect field $F$ every connected 
commutative unipotent group is isomorphic, as a $F$-scheme,  
to  affine space $\bbA_F^n$. In particular, it admits a non-constant morphism of algebraic varieties 
$\bbA_F^1\to \sfU$.

 \begin{definition}  
 A smooth connected unipotent $F$-group $\sfU$ over $F$ is called \emph{$F$-wound} if 
 every $F$-morphism of $F$-schemes 
 $\bbA_{F}^1\to \sfU$ is constant. 
 \end{definition}

It is known that a smooth unipotent group is $F$-wound if and only if it does not contain 
closed subgroups isomorphic to $\bbG_{a,F}$ \cite[Theorem 4.3.1]{Kambayashi}, \cite[Corollary B.2.6]{CGP}.

 For example, any non-trivial inseparable form of $\bbG_{a,F}$ is $F$-wound. 

We refer for the proof of  the following lemma to \cite[Lemma B.1.7]{CGP}

\begin{lemma}\label{lem:conrad2} The group $\sfU$ given by the zero scheme of a
 $p$-polynomial $\Phi$ in $\ga^{r+1}$ is an $F$-wound group if 
the principal part $\Phi^{\pp}$ of $\Phi$ has no zeros in $F^{r+1}\setminus \{0\}$.
\end{lemma}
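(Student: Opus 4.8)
The plan is to establish the defining property of $F$-woundness directly. In the present setting $\sfU=V(\Phi)\subset\ga^{r+1}$ is a smooth connected unipotent $F$-group (here one uses that $\Phi$ is a separable $p$-polynomial, as arranged above), so by the definition of an $F$-wound group it suffices to show that every $F$-morphism $\varphi\colon\bbA^1_F\to\sfU$ is constant. Such a $\varphi$ is the same datum as a tuple $(f_1,\dots,f_{r+1})\in F[t]^{r+1}$ with $\Phi(f_1,\dots,f_{r+1})=0$ in $F[t]$, and the goal is to deduce that each $f_i$ is constant. I would argue by contradiction: assuming some $f_i$ is non-constant, I will read off a nonzero $F$-point of the zero locus of $\Phi^{\pp}$ from the top-degree behaviour, contradicting the hypothesis.

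Concretely, for each $i$ with $f_i$ non-constant set $d_i=\deg f_i\ge 1$ and let $a_i\in F^{\times}$ be the leading coefficient of $f_i$. Since $F$ has characteristic $p$, the polynomial $f_i^{p^j}$ has degree exactly $p^jd_i$ with leading coefficient $a_i^{p^j}$, as taking a $p$-power introduces no cancellation. Put $D=\max\{p^{k_i}d_i : f_i\text{ non-constant}\}$; under the contradiction hypothesis $D\ge p>0$. For a non-constant $f_i$ and $j<k_i$ the monomial $c_j^{(i)}f_i^{p^j}$ has degree $p^jd_i\le p^{k_i-1}d_i<p^{k_i}d_i\le D$, while for a constant $f_i$ every monomial $c_j^{(i)}f_i^{p^j}$ is itself a constant, hence of degree $0<D$. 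Therefore the coefficient of $t^D$ in $\Phi(f_1,\dots,f_{r+1})$ comes only from the principal monomials $c_{k_i}^{(i)}f_i^{p^{k_i}}$ with $i$ in the nonempty set $I=\{\,i : f_i\text{ non-constant and }p^{k_i}d_i=D\,\}$, and it equals $\sum_{i\in I}c_{k_i}^{(i)}a_i^{p^{k_i}}$. Since $\Phi(f_1,\dots,f_{r+1})$ is the zero polynomial, this sum vanishes.

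Now define $b=(b_1,\dots,b_{r+1})\in F^{r+1}$ by $b_i=a_i$ for $i\in I$ and $b_i=0$ otherwise. Then $b\neq 0$ because $I\neq\emptyset$ and each $a_i$ is nonzero, and
\[
\Phi^{\pp}(b)=\sum_{i=1}^{r+1}c_{k_i}^{(i)}b_i^{p^{k_i}}=\sum_{i\in I}c_{k_i}^{(i)}a_i^{p^{k_i}}=0,
\]
which contradicts the assumption that $\Phi^{\pp}$ has no zero in $F^{r+1}\setminus\{0\}$. Hence no $f_i$ is non-constant, $\varphi$ is constant, and $\sfU$ is $F$-wound.

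The step requiring the most care is the degree bookkeeping in the second paragraph: one must be sure that the highest-degree part of $\Phi(f_1,\dots,f_{r+1})$ is governed precisely by the principal part $\Phi^{\pp}$ evaluated at the tuple of leading coefficients — this is exactly where the special shape of a $p$-polynomial (each summand involving a single variable, with an unambiguous top exponent $p^{k_i}$) is essential — and that padding the identity $\sum_{i\in I}c_{k_i}^{(i)}a_i^{p^{k_i}}=0$ with zeros yields a legitimate, and genuinely nonzero, $F$-point of $V(\Phi^{\pp})$. Everything else is formal; alternatively the same computation can be phrased valuation-theoretically at the place $t=\infty$ of $F(t)$, comparing $\infty$-adic valuations of the monomials of $\Phi$, but the polynomial-degree formulation is the most transparent.
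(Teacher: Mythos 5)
Your argument is correct: a morphism $\bbA_F^1\to\sfU$ is indeed a tuple $(f_1,\dots,f_{r+1})\in F[t]^{r+1}$ killed by $\Phi$, the coefficient of $t^D$ in $\Phi(f_1,\dots,f_{r+1})$ is exactly $\sum_{i\in I}c_{k_i}^{(i)}a_i^{p^{k_i}}$ because $p$-th powers cause no cancellation in leading terms and each summand of $\Phi$ involves a single variable, and padding that relation with zeros yields a genuine nonzero point of $V(\Phi^{\pp})$. The paper itself gives no proof, deferring to \cite[Lemma B.1.7]{CGP}, and your leading-coefficient computation is essentially the standard argument found there, so your write-up simply has the merit of being self-contained.
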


\begin{remark}\label{ext:wound} For any separable extension $F'/F$, the group $\sfU_{F'}$ is 
 an $F'$-wound group. 
However, it may be still  $F'$-wound after an inseparable extension $F'/F$. For example, if $\sfU$ is given by equation 
$u^2+v+av^2+bv^4 = 0,\  a^{1/2}\not\in F$, then it is $F$-wound and $F'$-wound under extension $F' = F(b^{1/4})$. 
Under this extension the group is isomorphic to a wound group given by equation 
$y^2+v+av^2 = 0$. 

Another remark is that the converse of  Lemma \ref{lem:conrad2} is wrong. For example, applying an automorphism 
$(x,y) \mapsto (x+x^p,y)$ to an $F$-wound group $y^p+x+ax^p = 0$ we obtain that the principal part has 
zero in $F^2\setminus \{0\}$.
\end{remark}

The following theorem is Corollary  B.2.5 from \cite{CGP}.
 
 \begin{theorem}\label{decomp} Every smooth connected unipotent $p$-torsion 
 commutative algebraic group over $F$ is a 
 direct product of $\sfU = V\times \sfU'$, where $V\cong \ga^s$ and $\sfU'$ is a 
 smooth connected $F$-wound unipotent group.
 \end{theorem}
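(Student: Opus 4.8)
The plan is as follows. First I would choose $V\subseteq\sfU$ to be a smooth connected $F$-split subgroup of maximal dimension; such subgroups exist (the trivial subgroup is one) and their dimensions are bounded by $\dim\sfU$, and since $\sfU$ is commutative and killed by $p$ such a $V$ is a vector group, $V\cong\ga^{s}$. I would then show that $\sfU':=\sfU/V$ is $F$-wound, and finally that the extension $0\to V\to\sfU\to\sfU'\to0$ splits; the image of a section $\sfU'\to\sfU$ is then an $F$-wound subgroup meeting $V$ trivially and spanning $\sfU$ together with $V$, giving the asserted internal direct product $\sfU=V\times\sfU'$.

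The one computational ingredient needed is that $\Ext^{1}(\ga^{a},\ga^{b})=0$ in the category of commutative $p$-torsion affine group schemes over $F$. By dévissage in each variable this reduces to $\Ext^{1}(\ga,\ga)=0$ there, which follows from the classical description of the symmetric Hochschild $2$-cocycles of $\ga$: modulo coboundaries these are spanned by the Witt-type cocycles $c_{n}(s,s')=\tfrac1p\big((s+s')^{p^{n}}-s^{p^{n}}-s'^{p^{n}}\big)$ for $n\ge1$, and a short computation gives $\sum_{i=1}^{p-1}c_{n}(is,s)=(p^{p^{n}-1}-1)s^{p^{n}}$, which equals $-s^{p^{n}}\ne0$ in characteristic $p$; hence no nonzero combination of the $c_{n}$ yields an extension killed by $p$. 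Granting this, woundness of $\sfU'$ is quick: if $\sfU'$ contained a closed subgroup $L\cong\ga$, its preimage $E$ in $\sfU$ would be an extension of $\ga$ by the vector group $V$, hence smooth, connected, and, by the vanishing just recorded, $F$-split with $E\cong\ga^{s+1}$ — a smooth connected $F$-split subgroup of $\sfU$ of dimension $s+1$, contradicting the choice of $V$. (In the language of Lemma~\ref{lem:conrad2}, after a change of coordinates $\sfU'$ is then cut out of a vector group by a $p$-polynomial whose principal part has no nontrivial $F$-rational zero.)

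The last step — splitting $0\to V\to\sfU\to\sfU'\to0$ with $\sfU'$ $F$-wound — is the one I expect to be the genuine obstacle. Filtering $V$ by a chain of copies of $\ga$ and peeling them off one at a time reduces it to the assertion $(\dagger)$: every commutative $p$-torsion extension of an $F$-wound group by $\ga$ splits. A bare induction on dimension does not establish $(\dagger)$, because in an extension $0\to\ga\to E\to\sfU'\to0$ the split part of $E$ is precisely the given $\ga$ — any larger smooth connected $F$-split subgroup of $E$ would map onto a nontrivial $F$-split subgroup of the wound group $E/\ga\cong\sfU'$ — so there is no smaller split subgroup to recurse on. Proving $(\dagger)$ is where the structure theory of $F$-wound unipotent groups, due to Tits \cite{Tits} and developed in \cite[Appendix~B]{CGP}, has to be brought in: concretely one realizes $E$ inside some vector group $\ga^{N}$ as the zero locus of a $p$-polynomial, puts it into the normal form supplied by that theory (the principal part of the wound part having no nontrivial $F$-rational zero, cf. Lemma~\ref{lem:conrad2}), and reads off the section from the resulting description. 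With $(\dagger)$ in hand the extension splits and the decomposition $\sfU=V\times\sfU'$ follows; this is \cite[Cor.~B.2.5]{CGP}.
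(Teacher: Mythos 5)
The paper offers no proof of this statement at all: it is quoted as Corollary~B.2.5 of \cite{CGP}, so your sketch can only be measured against that reference and on its own terms. The parts you actually carry out are essentially sound. Taking $V$ to be a smooth connected $F$-split subgroup of maximal dimension is fine, and your cocycle computation is correct: a symmetric $2$-cocycle $\partial g+\sum_n a_nc_n$ defines an extension killed by $p$ only when every $a_n=0$ (your identity $\sum_{i=1}^{p-1}c_n(is,s)\equiv -s^{p^n}$ checks out, since $\tfrac1p(p^{p^n}-p)=p^{p^n-1}-1\equiv-1 \pmod p$), which does give the vanishing of the group of $p$-torsion extensions of $\ga$ by $\ga$, hence that $V$ is a vector group and that $\sfU/V$ contains no copy of $\ga$, i.e.\ is $F$-wound.

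The genuine gap is exactly where you locate it: assertion $(\dagger)$, the splitting of $0\to V\to\sfU\to\sfU/V\to0$. That splitting \emph{is} the content of the theorem --- the existence of a maximal split subgroup with wound quotient is the easier general structure theory, valid for all smooth connected unipotent groups, and it is only under the commutative $p$-torsion hypotheses that the extension splits. For $(\dagger)$ you give no argument; you describe the kind of argument that would be needed (realize the extension as the kernel of a $p$-polynomial, put it in normal form, read off a section) and then close by invoking \cite[Cor.~B.2.5]{CGP}, which is the statement being proved. As written, the proposal is an accurate reduction of the theorem to its hard step together with a citation of the theorem, not a proof. A further caution on the step you gesture at: as Remark~\ref{ext:wound} points out, the converse of Lemma~\ref{lem:conrad2} fails for an arbitrary presentation, so the normal form in which the principal part of the $p$-polynomial cutting out the wound part has no nontrivial $F$-rational zero is itself a nontrivial input (essentially \cite[Prop.~B.1.13]{CGP} and its refinements), not something that can be assumed; making that normalization precise is where the missing work lies.
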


 We know that $G=\ga^r$ admits the projective space $\bbP_F^r$ as a 
 $G$-equivariant regular projective compactification $\bar{G}$ of $G$. It is 
 minimal in the sense that $G$ acts trivially on the complement $\bar{G}\setminus G$.

 \begin{definition} Let $G$ be a connected commutative algebraic group over $F$. 
 A \emph{minimal  compactification} of  $G$ is a 
projective $F$-scheme $\bar{G}$  such that 
 $G$ admits a $G$-equivariant closed embedding in $\bar{G}$ and  
 $G$ acts trivially on the complement
 $\bar{G}\setminus G$.
 \end{definition}
 
The  following theorem can be found in \cite[10.2, Proposition 11]{Bosch}. 

 \begin{theorem}\label{regcomp} Every  smooth connected commutative unipotent 
 $p$-torsion algebraic group $\sfU$ over $F$ admits a minimal  compactification 
 $G\hookrightarrow \bar{G}$. 
 The group is $F$-wound if and only if the complement $\bar{\sfU}\setminus \sfU$ 
 does not have a rational point over $F$. 
 \end{theorem}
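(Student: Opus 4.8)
The plan is to reduce the statement to a concrete construction via $p$-polynomials and then invoke Theorem~\ref{decomp} together with Lemma~\ref{lem:conrad2}. First I would recall that a smooth connected commutative unipotent $p$-torsion group $\sfU$ over $F$ embeds as a codimension-one subgroup $V(\Phi)\subset\bbG_{a,F}^{r+1}$ for a separable $p$-polynomial $\Phi$ as in \eqref{p-polynomial}. Using the $\bbG_a^{r+1}$-equivariant compactification $\bbP_F^{r+1}\supset \bbA_F^{r+1}$, I would take $\bar{\sfU}$ to be the Zariski closure of $\sfU$ in $\bbP_F^{r+1}$ — or, if one wants a genuinely $\sfU$-equivariant and not merely $\bbG_a^{r+1}$-equivariant construction, one instead homogenizes $\Phi$ with respect to suitable weights (as described in the introduction, obtaining a curve in a weighted projective plane when $r=1$) and lets $\sfU$ act by translation. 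The key point is that the translation action of $\bbG_a^{r+1}$ on $\bbP_F^{r+1}$ fixes the hyperplane at infinity pointwise, so $\sfU$ acts trivially on $\bar{\sfU}\setminus\sfU$, giving a minimal compactification; this settles the existence part. For the reference-based route I would simply cite \cite[10.2, Proposition 11]{Bosch}, where the equivariant compactification is constructed in general.

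For the equivalence "$\sfU$ is $F$-wound $\iff$ $\bar{\sfU}\setminus\sfU$ has no $F$-rational point," I would argue both directions. Suppose first that $\bar{\sfU}\setminus\sfU$ has an $F$-point $P$. The boundary is stable under $\sfU$, and near $P$ one can use the local structure of the compactification (the boundary is a Cartier divisor, $\bar{\sfU}$ is regular along it in the cases of interest, or one passes to a normalization) to produce a one-parameter subgroup: concretely, choosing a suitable affine chart at $P$ and using the $p$-polynomial equation, a line through $P$ in the direction of a nonzero zero of the principal part $\Phi^{\pp}$ meets $\sfU$ in a copy of $\bbA_F^1$, so $\sfU$ admits a nonconstant $F$-morphism from $\bbA_F^1$ and is not $F$-wound. (Equivalently: an $F$-point at infinity corresponds, after the weighted homogenization, to $\Phi^{\pp}$ having a nontrivial $F$-zero, which by Lemma~\ref{lem:conrad2} is exactly the obstruction to being $F$-wound — and by the Remark one must be slightly careful, since that lemma's converse fails, so the clean statement is really about the closure $\bar{\sfU}$ rather than about one particular $\Phi$.) Conversely, if $\sfU$ is not $F$-wound, then by Theorem~\ref{decomp} it contains a copy of $\bbG_{a,F}$; the closure of that $\bbG_{a,F}$ inside $\bar{\sfU}$ is a compactification of $\bbA_F^1$ with at least one boundary point, and that boundary point is defined over $F$ because $\bbG_{a,F}$ has a unique point at infinity fixed by translation, namely $[0:\cdots:0:1:0]$-type point, which is manifestly $F$-rational; hence $\bar{\sfU}\setminus\sfU$ has an $F$-point.

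The main obstacle I anticipate is making the first direction (boundary $F$-point $\Rightarrow$ not $F$-wound) fully rigorous without circularity, since the naive "take a line at infinity" argument needs the compactification to be nice enough (normal, or the boundary divisor well understood) at the chosen point, and in positive characteristic the closure in $\bbP_F^{r+1}$ need not be normal — precisely the phenomenon flagged in the introduction where one must normalize to lower the arithmetic genus. The cleanest fix is to not reprove this from scratch but to cite \cite[10.2, Proposition 11]{Bosch} for both the existence of the minimal compactification and the characterization of $F$-woundedness in terms of boundary rational points; the self-contained argument via $p$-polynomials then serves as an illustrative remark rather than the official proof. If a self-contained proof is wanted, I would restrict to the one-dimensional case, use the explicit weighted-homogeneous model $\bar{\sfU}\subset\bbP(1,1,p^{\min\{m,n\}})$ from the introduction, and observe directly that its single boundary point is $F$-rational exactly when the relevant coefficient $a_m$ is a $p$-th power in $F$ up to the obvious normalization, which is the classical criterion.
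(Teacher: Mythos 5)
Your proposal follows essentially the same route as the paper: reduce via Theorem~\ref{decomp}, form the weighted homogeneous compactification of the zero locus of a separable $p$-polynomial, observe that the translation action fixes the boundary $V(\Phi^{\pp})\cap V(t_0)$ pointwise, and characterize $F$-woundness by the absence of $F$-points there (with \cite[10.2, Proposition 11]{Bosch} as the official reference). You even flag the one genuine subtlety --- that Lemma~\ref{lem:conrad2} gives only one implication for a general presentation $\Phi$ --- which the paper's own proof passes over silently, so no correction is needed.
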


\begin{proof} By Theorem \ref{decomp}, we may assume that $\sfU$ is $F$-wound. 
We know that $\sfU$ is isomorphic to the zero scheme $V(\Phi)$ of a separable $p$-polynomial 
in $\ga^{r+1}$. Let 
$\Phi$ be as in equation \eqref{p-polynomial} with 
$$k_1\ge \cdots \ge k_{r+1}$$
and
\beq\label{regcompact}
\bar{\Phi} = \sum_{i=1}^{r+1}(\sum_{j=0}^{k_i}c_{i}^{(j)}t_0^{p^{k_i-j}}t_i^{p^j}),
\eeq
be a weighted homogenization of $\Phi$. We view $\bar{\Phi} = 0$ 
as a hypersurface $X = \bar{\sfU}$ of degree $p^r$ in the weighted projective space 
$$\bbP(q_0,q_1,\ldots,q_{r+1})= \bbP(1,1,p^{k_1-k_2},p^{k_1-k_3},\ldots,p^{k_1-k_{r+1}}).$$ 

We call such a compactification a \emph{weighted homogeneous compactification}.
 
It is immediate to see that the complement $\bar{\sfU}\setminus \sfU$ is equal to 
$V(\Phi^{\pp})$ in the hyperplane $V(t_0)$. The group $\sfU$ acts on $\bar{\sfU}$ by translations
$$(a_1,\ldots,a_{r+1}):(t_0,t_1,\ldots,t_{r+1})\mapsto 
(t_0,t_1+a_1t_0,\ldots,t_{r+1}+a_{r+1}t_{r+1}).$$
The action is identical on the complement and also, by Lemma \ref{lem:conrad1} the complement has 
no rational points over $F$ if and only if $\sfU$ is an $F$-wound group.

Note that $X = \bar{\sfU}$ is not smooth. The singular locus of $ X\otimes_F\bar{F}$ is the 
hyperplane section $V(t_0)$.
\end{proof}

\begin{proposition}\label{normality} The weighted homogenous compactification $X$ from above is normal over $F$ if and only if 
the principal part $\Phi^{\pp}$ of $\Phi$   is reduced over $F$.
\end{proposition}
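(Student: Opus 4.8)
The plan is to compute the local rings of $X$ along the boundary divisor $V(t_0)$ and apply Serre's criterion, reducing normality to the condition that $X$ is regular in codimension one (it is already easily seen to be Cohen--Macaulay, being a hypersurface in a weighted projective space). Since $\sfU$ itself is smooth, the only place where $X$ can fail to be $R_1$ is along the complement $\bar{\sfU}\setminus\sfU = V(\Phi^{\pp})\cap V(t_0)$. So the heart of the argument is a local analysis in the chart $t_0\neq 0$: setting $t_0=1$ recovers the affine equation $\Phi(x_1,\dots,x_{r+1})=0$ together with its boundary at infinity. First I would pass to an affine chart of the weighted projective space containing a generic point of the boundary divisor; there the equation looks (after dehomogenizing with respect to one of the variables of largest weight) like $\Phi^{\pp}$ plus lower-order corrections, and the boundary divisor is cut out by a single coordinate, say $s=0$. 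One then checks that $X$ is regular at the generic point of each irreducible component of $\{s=0\}\cap X$ precisely when the defining equation, restricted to that component, vanishes to order exactly one --- equivalently, when $\Phi^{\pp}$ is reduced over $F$.

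The key steps, in order: (1) observe $X$ is a hypersurface, hence Cohen--Macaulay, so by Serre's criterion normality $\iff R_1$; (2) note $\sfU = X\setminus V(t_0)$ is smooth (this is the separability of $\Phi$, as recalled right after \eqref{p-polynomial}), so only codimension-one points of the boundary $V(t_0)\cap X$ can be problematic; (3) in a suitable affine chart, write the local equation of $X$ near a generic point of the boundary as $s\cdot h + (\text{terms of the affine equation})$, and identify the boundary with $\{s=0\}$; (4) unwind the weighted homogenization \eqref{regcompact}: modulo the ideal $(s)$, the defining equation reduces to $\Phi^{\pp}(t_1,\dots,t_{r+1})$ (up to the weighting), so the scheme-theoretic boundary $V(t_0)\cap X$ is the projective scheme $\{\Phi^{\pp}=0\}$; (5) conclude that $X$ is $R_1$ along the boundary iff the hypersurface $\{\Phi^{\pp}=0\}$ has no embedded or multiple components, i.e. is reduced over $F$ --- for a hypersurface, reduced and generically reduced coincide, and generic reducedness of $V(\Phi^{\pp})$ is exactly the statement that the local equation of $X$ has multiplicity one at each generic point of the boundary.

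The main obstacle I anticipate is the bookkeeping in step (3)--(4): because we are working in a \emph{weighted} projective space with weights that are powers of $p$, the standard affine charts are quotient singularities of the affine space, and one must be careful that these ambient quotient singularities do not themselves spoil regularity along the boundary, or else choose charts (e.g. around the locus where the two weight-one coordinates $t_0,t_1$ are used) where the ambient space is smooth. A clean way around this is to work on the blow-up / the explicit affine model $\Spec F[x_1,\dots,x_{r+1}]$ glued to a neighborhood of the boundary via the variable of largest weight $t_{1}$ (weight one), so that the ambient chart is honest affine space and the boundary is a principal divisor; then regularity of $X$ at a generic point of a boundary component is literally the nonvanishing of a partial derivative, and one checks this fails exactly along the non-reduced locus of $\Phi^{\pp}$. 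A secondary point to handle carefully is the direction ``$X$ normal $\Rightarrow$ $\Phi^{\pp}$ reduced'': here one takes a non-reduced component of $V(\Phi^{\pp})$, produces a codimension-one point of $X$ at which the local ring is not a DVR (the equation has a square factor there), and invokes failure of $R_1$.
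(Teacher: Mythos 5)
Your proposal is correct and follows the paper's proof essentially verbatim: Serre's criterion reduces normality to $R_1$ along the boundary, one passes to the smooth affine chart $t_1\neq 0$ (a weight-one variable, so the chart is honest affine space), where the equation becomes $\widetilde{\Phi}^{\pp}+z_1^pB$ with boundary $\{z_1=0\}$, and regularity at the generic points of the boundary is equivalent to the maximal ideal there being generated by $z_1$, i.e.\ to generic (hence, for a hypersurface, actual) reducedness of $V(\Phi^{\pp})$ --- you are in fact slightly more careful than the paper, whose proof slips into ``irreducible'' where ``reduced'' is meant. One warning about the shortcut in your last paragraph: regularity at a generic boundary point is \emph{not} detected by nonvanishing of a partial derivative here, since $\widetilde{\Phi}^{\pp}$ is a polynomial in $p$-th powers and the correction term is $z_1^pB$, so every partial derivative of the local equation vanishes identically along $\{z_1=0\}$ and $X$ is never smooth there even when it is regular; you must argue, as you do in step (5), via the principal maximal ideal / reduced Cartier divisor.
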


\begin{proof} By Serre's criterion of normality, we have to check that  $X$ is regular at infinity. 
We know that the hyperplane $t_0 = 0$ cuts out the hypersurface along 
the hypersurface $V(\Phi^{\pp})$.

Let $P$ be the polynomial obtained from $\bar{\Phi}$ by dehomogenization 
with respect to the variable $t_1$, i.e. dividing by $t_1^{p^{k_1}}$ and using the new variables 
$z_1 = t_0/t_1, z_2 = t_2/t_1^{p^{k_1-k_2}},\ldots,z_{r} = t_{r+1}/t_1^{p^{k_1-k_{r+1}}}$.
 We can write  
 $P = \widetilde{\Phi}^{\pp}(1,z_2,\ldots,z_r)+z_1^pB(z_1,\ldots,z_r)$, where  $\tilde{\Phi}^{\pp}$ 
is the dehomogenization with respect to $t_1$ of the weighted homogeneous polynomial $\Phi^{\pp}$ of degree $p^{k_1}$ in variables 
$t_1,\ldots,t_{r+1}$.
The  quotient of the ring $F[z_1,\ldots,z_r]/(P)$ by the ideal $(z_1)$ is 
isomorphic to $F[z_2,\ldots,z_r]/(\tilde{\Phi}^{\pp})$. Thus the ideal $(z_1)$ is prime  
if and only if $\widetilde{\Phi}^{\pp}$ is irreducible over $F$, or, equivalently, $\Phi^{\pp}$ is irreducible over $F$.
So, $\Phi^{\pp}$ is irreducible over $F$ if and only if the boundary is defined by a principal prime ideal 
and hence if and only if $X$ is regular.
\end{proof}

Let $X = \bar{\sfU}$ be given as in the proof of the previous theorem. A well-known formula for the canonical sheaf 
of a hypersurface in a 
weighted projective space gives
\beq\label{canclass}
\omega_{X} \cong \calO_{X}(p^{k_1}-2-\sum_{i=2}^{r+1}p^{k_1-k_i}).
\eeq
For example, if $p = k_1=k_2 =k_3 = r = 2$, $X$ is a quartic surface in $\bbP^3$. 
If we choose the equation in order $X$ is normal and all its singular points are rational double points, 
then its minimal resolution of singularities is a K3 surface. 

\section{Inseparable forms of $\bbG_a$}\label{S3} 
Specializing to the case $r = 1$, we see that any $F$-wound one-dimensional 
unipotent group (automatically commutative) is a subgroup of $\ga^2$ given by an equation
$$u^{p^m}+a_{m-1}u^{p^{m-1}}+\cdots+ a_0u+b_nv^{p^n}+b_{n-1}v^{p^{n-1}}+\cdots+ b_0v = 0,$$
where $n\le m$ and $b_n^{p^{-n}}\not\in F$. 

In fact,  we have the following  theorem of Russell \cite{Russell}.
\begin{theorem}\label{russell} Each smooth connected one dimensional 
unipotent group $\sfU$ is isomorphic to a subgroup 
of $\bbG_{a,F}^2$ given by a $p$-polynomial of the form  
\beq\label{russell1}
\Phi(u,v) = u^{p^n}+v+a_1v^p\cdots+a_mv^{p^m} = 0, 
\eeq
\cite{Russell}. The smallest  purely inseparable  extension $F'/F$  that splits $\sfU$ 
 is  equal 
$F' = F(a_1^{p^{-1/n}},\ldots,a_m^{p^{-1/n}})$. 
\end{theorem}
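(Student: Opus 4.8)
By Theorem~\ref{decomp} (in dimension one), $\sfU$ is either isomorphic to $\bbG_{a,F}$ or is $F$-wound. In the first case one takes $\Phi(u,v)=u+v$, i.e. $n=0$ with no coefficients $a_i$, for which $F'=F$; so from now on I assume $\sfU$ is $F$-wound, realised as the zero scheme $V(\Psi)$ of a separable $p$-polynomial $\Psi(u,v)=P(u)+Q(v)$ on $\ga^2$ as in the discussion preceding the statement. The plan is to move $\Psi$ into Russell shape by an automorphism of $\ga^2$. Here $\Aut(\ga^2)$ is the group $\GL_2(F\{\tau\})$ of invertible matrices over the twisted polynomial ring $F\{\tau\}$ (with $\tau c=c^p\tau$); since $F$ is a field, $F\{\tau\}$ has a right division algorithm and its units are exactly $F^\times$, so this group is generated by the transvections $(u,v)\mapsto(u+h(v),v)$ and $(u,v)\mapsto(u,v+h(u))$, the rescalings $(u,v)\mapsto(\lambda u,\mu v)$, and the swap. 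Replacing $\Psi$ by $\Psi\circ A$: the transvections replace $Q$ by $Q+P\circ h$ (resp. $P$ by $P+Q\circ h$), the rescalings act by composition on the right, and the swap exchanges $P$ and $Q$.

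\textbf{Normalisation.} After a swap I may assume $Q$ has nonzero constant term, and after rescaling $v$ that this constant term equals $1$, so $Q(v)=v+b_1v^p+\cdots$. Using $(u,v)\mapsto(u,v+h(u))$ to add $Q\circ h$ to $P$, one runs a division-algorithm reduction that successively cancels monomials of $P$; the key point is that it cannot be completed — cancelling one particular coefficient would require a $p^{j}$-th root in $F$ of a leading coefficient that, because $\sfU$ is $F$-wound, is not a $p^{j}$-th power in $F$. (Were there no such obstruction the reduction would end with a $p$-polynomial of degree $\le 1$ in each variable, forcing $\sfU\cong\bbG_{a,F}$.) Organising the operations, then a final rescaling and swap, leaves $P(u)=u^{p^n}$ a single monic monomial while $Q$ stays separable with constant term $1$; this is the form $\Phi(u,v)=u^{p^n}+v+a_1v^p+\cdots+a_mv^{p^m}$. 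The careful part is tracking which coefficients of $P$ are rigid and which can be changed only modulo $p^n$-th powers, and checking that the reduction halts at precisely this shape; it is cleanest to run this bookkeeping inside the $F\{\tau\}$-module $\Hom_F(\sfU,\ga)$ generated by $u$ and $v$, which is Russell's original framework.

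\textbf{The minimal splitting extension.} Write $\alpha_i=a_i^{1/p^n}$. By Theorem~\ref{regcomp}, for a purely inseparable $E/F$ the group $\sfU$ splits over $E$ if and only if the point(s) of $\bar\sfU\setminus\sfU$ on the regular (i.e. normalised) compactification become $E$-rational, equivalently their residue field(s) embed into $E$ over $F$; hence the smallest splitting extension is their compositum $\kappa$, and the remaining task is to compute $\kappa$. The boundary of the weighted homogenisation $\bar\Phi$ is $V(\Phi^{\pp})=V(u^{p^n}+a_mv^{p^m})$, a single point with residue field $F(\alpha_m)$, but in general not normal (Proposition~\ref{normality}); performing the normalisation near infinity — an explicit local computation of the type in the proof of Proposition~\ref{normality} — brings in the remaining coefficients and yields $\kappa=F(\alpha_1,\dots,\alpha_m)$. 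Thus $F'=F(\alpha_1,\dots,\alpha_m)$ is the smallest splitting extension. (As a check on the ``$\subseteq$'' half: since $a_iv^{p^i}=(\alpha_iv^{p^{i-n}})^{p^n}$ for $i\ge n$, over $F'$ the substitution $u\mapsto u-\sum_{i\ge n}\alpha_iv^{p^{i-n}}$ kills all $v$-monomials of degree $\ge p^n$, after which the equation is a $p$-th power up to the term $v$; extracting that $p$-th power and iterating, with a suitable choice of coordinates at each step, exhibits $\sfU_{F'}\cong\bbG_{a,F'}$. Minimality can also be obtained by choosing an isomorphism $\sfU_E\cong\bbG_{a,E}$, writing $u,v$ as generating $p$-polynomials $U(t),V(t)$ over $E$ with $U^{p^n}+V+a_1V^p+\cdots+a_mV^{p^m}=0$, and comparing coefficients from the top to peel off each $\alpha_i\in E$.)

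\textbf{Main obstacle.} The conceptual content is slight — the units of $F\{\tau\}$ are the scalars, and woundness means a leading coefficient lacks a $p^{j}$-th root in $F$. The work is two combinatorial arguments: forcing the reduction of $(P,Q)$ to terminate at exactly Russell's shape, and the minimality half of the splitting-field statement. I expect the latter — equivalently, the computation of the residue field at infinity through the normalisation — to be the more delicate point, since the iterated reductions used to split $\sfU$ over $F'$ do not by themselves show that no proper subfield of $F'$ already does the job.
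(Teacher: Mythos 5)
A preliminary remark: the paper does not prove this theorem at all --- it is imported from \cite{Russell} with only the citation (and the exponent $p^{-1/n}$ in the displayed splitting field is a typo for $p^{-n}$) --- so there is no internal argument to compare yours with. Judged on its own, your first half points at the right framework (Russell's proof does run through the $F\{\tau\}$-module $\Hom_F(\sfU,\ga)$ and a one-sided division argument, with woundness supplying the obstruction), but the reduction you describe is not yet a proof. The move $P\mapsto P+Q\circ h$ \emph{raises} $\deg_\tau P$ whenever it is used to cancel a term $a_j\tau^j$ of $P$ with $j+\deg_\tau Q>\deg_\tau P$, so ``successively cancelling monomials of $P$'' does not terminate for any evident reason and you must exhibit a decreasing invariant; moreover, making $P$ monic cannot be done by rescaling $u$ alone (that would require a $p^n$-th root of its leading coefficient) --- one must also multiply the defining polynomial by a scalar and rescale $v$, which feeds back into $Q$. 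These are precisely the points where the content of the normal form lies, and they are deferred rather than done.

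The second half contains an actual error, not merely a gap. You claim that the minimal splitting field equals the residue field $\kappa$ of the boundary point of the \emph{regular} compactification, via the equivalence ``$\sfU$ splits over $E$ iff that point becomes $E$-rational''. Only the ``only if'' direction is valid, and the paper's own case 1(c) of Proposition~\ref{queen} refutes the rest: for $u^4+v+av^2+c^2v^4=0$ the regular compactification is the plane cubic $t_0t_2^2+t_1^3+at_1t_0^2+ct_0^3=0$, whose boundary point has residue field $\kappa=F(a^{1/2},c^{1/2})$, whereas the theorem gives minimal splitting field $F(a^{1/4},c^{1/2})$. Indeed, over $\kappa$ the substitution $u\mapsto u+c^{1/2}v$ followed by $s=u^2+a^{1/2}v$ turns the Russell equation into $u^2+s+a^{1/2}s^2=0$, which is still $\kappa$-wound (of genus $0$) whenever $a^{1/4}\notin\kappa$, e.g.\ for $F=\bbF_2(x,y)$, $a=x$, $c=y$. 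The underlying mistake is that the regular compactification of $\sfU_E$ is the normalization of $\bar{\sfU}\otimes_F E$, and normalization enlarges the residue field at the boundary, so $E$-rationality downstairs does not give it upstairs. Minimality therefore has to be proved by the coefficient comparison you mention only in a closing parenthesis --- writing $u,v$ as additive polynomials in a coordinate of $\sfU_E\cong\bbG_{a,E}$ and peeling off the $a_i^{1/p^n}$ --- which is what Russell actually does and which your proposal does not carry out.
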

The number $n$ is called the \emph{height} of $\sfU$.

The polynomial $\Phi(u,v)$ is obviously  not unique. 
For example, if $n\le m$, we can also add to $a_m$ any $p^n$-th power of an element from $K$ 
without changing the isomorphism class.
 Proposition 2.3 from loc.cit. gives  a precise relationship 
between two polynomials defining isomorphic groups. 

 We refer to \eqref{russell1} as 
a \emph{Russell equation}. The  projections 
$\pi_1:\sfU\to \ga, (u,v)\mapsto u,$ and $\pi_2:\sfU\to \ga, (u,v) \mapsto v,$ make the Cartesian square
$$\xymatrix{\sfU\ar[r]^{\pi_2}\ar[d]^{\pi_1}&\ga\ar[d]^\tau\\
\ga\ar[r]^{\bfF^r}&\ga},
$$
where $\bfF:\ga\to \ga$ is the Frobenius homomorphism and $\tau$ is given by the $p$-polynomial 
$v+v^p+\cdots+a_mv^{p^m}$.

\begin{itemize}
\item In this section $\sfU$ always denotes a smooth connected $F$-wound unipotent one-dimensional algebraic group over $F$.
\end{itemize}

\begin{proposition} Any  compactification $X$ of $\sfU$ is unibranched.
\end{proposition}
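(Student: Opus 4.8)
The plan is to reduce the statement to the elementary fact that, over an algebraically closed field, the only smooth projective compactification of the affine line is $\bbP^1$ with a single point at infinity, and then to pull this back through base change to $\bar F$ and through normalization.

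First I would normalize over $F$. Write $X$ for a proper integral curve over $F$ containing $\sfU$ as a dense open subscheme, and let $\nu\colon\tilde X\to X$ be its normalization. Since $\sfU$ is smooth over $F$ it is a normal scheme, so (curves over a field being excellent) $\nu$ is finite, surjective, and an isomorphism over $\sfU$; hence $Z:=\nu^{-1}(X\setminus\sfU)=\tilde X\setminus\sfU$ is a nonempty finite closed subset of the regular proper curve $\tilde X$. I claim it suffices to prove that $Z$ is a single point $z$. Granting this, $\nu(Z)=\{\nu(z)\}$ must equal $X\setminus\sfU$ and $\nu^{-1}(\nu(z))=\{z\}$, so the integral closure of $\mathcal O_{X,\nu(z)}$ in its fraction field is the local ring $\mathcal O_{\tilde X,z}$; as $\nu$ is an isomorphism at every other point, $X$ is unibranched.

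It remains to see that $Z$ is one point, and here I would base change to $\bar F$. Because $\sfU$ is smooth and connected over $F$ with the identity as an $F$-rational point, it is geometrically integral, so $F(\tilde X)=F(\sfU)$ is separably generated over $F$ and $F$ is separably closed in it; hence $\tilde X_{\bar F}:=\tilde X\times_F\bar F$ is again an integral proper curve, and under the faithfully flat surjection $q\colon\tilde X_{\bar F}\to\tilde X$ one has $q^{-1}(Z)=\tilde X_{\bar F}\setminus\sfU_{\bar F}$, so it is enough to show this preimage is a single point. Let $g\colon Y\to\tilde X_{\bar F}$ be the normalization (one must normalize again, since normalization does not commute with $\cdot\otimes_F\bar F$): it is finite, surjective, an isomorphism over the smooth locus $\sfU_{\bar F}$, with $Y$ a smooth proper curve over $\bar F$ and $F(Y)=\operatorname{Frac}(F(\sfU)\otimes_F\bar F)=\bar F(\sfU_{\bar F})$. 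Since $\sfU_{\bar F}$ is a one-dimensional smooth connected unipotent group over the algebraically closed field $\bar F$, it is isomorphic to $\bbG_{a,\bar F}$; thus $F(Y)\cong\bar F(t)$ and therefore $Y\cong\bbP^1_{\bar F}$, and the open immersion $\sfU_{\bar F}\cong\bbA^1_{\bar F}\hookrightarrow Y\cong\bbP^1_{\bar F}$ has complement a single closed point, since an open subscheme of $\bbP^1_{\bar F}$ isomorphic to $\bbA^1_{\bar F}$ must be the complement of one rational point. As $g$ is surjective and an isomorphism over $\sfU_{\bar F}$, it carries $Y\setminus\sfU_{\bar F}$ onto $\tilde X_{\bar F}\setminus\sfU_{\bar F}$; the source is one point, hence so is the target, and tracing back, $Z$ is one point.

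The argument is essentially formal once set up, so the only real obstacle is keeping the base-change bookkeeping straight: that $\tilde X_{\bar F}$ stays integral (this uses that $F(\sfU)/F$ is separably generated with $F$ separably closed in it, guaranteed because $\sfU$ is smooth, connected, and has a rational point) and that one must re-normalize after base change. As a byproduct the proof shows that $X\setminus\sfU$ is a single point whose residue field is purely inseparable over $F$, and in fact that $X$ is geometrically unibranched.
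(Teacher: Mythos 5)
Your proof is correct and follows essentially the same route as the paper's: pass to an extension splitting $\sfU$, use that the normalized compactification of $\bbG_{a}$ there is $\bbP^1$ with a single boundary point, and descend via the (purely inseparable, hence topologically trivial) base change. You are somewhat more careful than the text about the order of operations --- normalizing over $F$ first and re-normalizing after base change, since normalization does not commute with inseparable extension --- but the underlying argument is the same.
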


\begin{proof} It is rather obvious. Let $F'$ be a splitting field of $\sfU$, then
$X_{F'}$ is a compactification of $\bbG_{a,F'}$. Its normalization $\tilde{X}$ is isomorphic to 
$\bbP_{F'}^1$ and has one point as the complement. Thus $\tilde{X}$ and 
$X_{F'}$ are homeomorphic and since $F'/F$ is purely inseparable, $X_{F'}$ is 
homeomorphic to $X$ and has only one point at infinity.
\end{proof}

\begin{proposition} A regular compactification $X$ of $\sfU$ is a minimal compactification and the 
boundary consists of one point. 
\end{proposition}

\begin{proof} This immediately follows from the fact that the action of $\sfU$ on itself extends 
to an action of $\sfU$ on $X$ that acts trivially on the boundary \cite[Lemma 2.14]{Laurent}. 
Let $F'/F$ be a purely inseparable 
 splitting extension
of $\sfU$. Then $X_{F'}$ is a compactification of $\bbG_{a,F'}$ and its normalization is a 
regular compactification of $\bbG_{a,F'}$. It must be isomorphic to $\bbP_{F'}^1$ with a 
boundary consisting of one point. Thus $X_{F'}$ has one unibranched    singular point and 
$\bbP_{F'}^1$ is  homeomorphic to $X'$. Since $F'/F$ is purely inseparable, $X_{F'}$ is homeomorphic to $X$.
Thus the boundary of $X$  consists of one point.

\end{proof} 

Although a regular compactification of $\sfU$ is unique, there are many non-regular compactification.

Specializing equation \eqref{regcompact} to the case of a one-dimensional group, we get a 
compactification of $\sfU$ given as a curve of degree $p^{\max\{m,n\}}$ in the weighted homogeneous plane
$\bbP(1,1,p^{\max\{m,n\}-\min\{m,n\}})$ given by equation
\beq\label{whcompnm}
t_2^{p^n}+t_0^{p^m}t_1+\cdots+a_{m-1}t_0^{p}t_1^{p^{m-1}}+a_mt_1^{p^m} = 0
\eeq
if $n\le m$ and 
\beq\label{whcompmn}t_2^{p^n}+t_0^{p^m-1}t_1+\cdots+a_{m-1}t_0^{p^m-p^{m-1}}t_1^{p^{m-1}}+a_mt_1^{p^m} = 0
\eeq
if $m\le n$. 
 
It contains $\sfU$ as the complement of the hyperplane at infinity $V(t_0)$. 
 
Specializing formula \eqref{canclass}, we obtain
$$\omega_X \cong \calO_X(-2-p^{\max\{m,n\}-\min\{m,n\}}+p^{\max\{m,n\}})
$$
and 
\beq\label{unigenus}
p_a(X) = \half (p^{\min\{m,n\}}-1)(p^{\max\{m,n\}}-2).
\eeq 
(see also \cite[Theorem 1.25]{Achet}).

The curve has a unique unibranched non-smooth point  with coordinates $(x_0,y_0,0)\in X(\bar{F}) $, where 
$y_0^{p^n}+a_mx_0^{p^m} = 0$. It is regular if and only if $u^{p^n}+a_mv^{p^m}$ is reduced over $F$, equivalently,
$a_m$ is not a $p$th-power in $F$. The residue field of the boundary point is $F(a_m^{1/p^n})$. It is a 
subextension of the splitting field of $\sfU$.

 Let 
$q:\tilde{X}\to X$ be the normalization of $X$ over $F$. Recall that the annihilator ideal $\calI$ of 
$q_*\calO_{\tilde{X}}/\calO_X$ can be considered as an ideal sheaf on $\tilde{X}$. The closed 
subscheme $\frakc$ 
of $\tilde{X}$ defined by this ideal is called the \emph{conductor} of the normalization (the same name applies to the 
closed subscheme of $X$ defined by the ideal $\calI$). A curve $X$ is \emph{unibranched} if the support of 
$\frakc$ over each singular point consists of one point. It is \emph{semi-normal} if  $\frakc$  is reduced 
(e.g. has only ordinary double points as its singularities).

It is easy to see that a semi-normal unibranched curve over an algebraically closed field is regular.

Let $X$ be a regular compactification of $\sfU$ and $P_\infty$ its boundary point. One can 
construct a semi-normal unibranched compactification $X'$ by ``pinching'' the boundary point. This means that 
$X$ is a normalization of $X'$ and the conductor is equal to $P_\infty$ (see \cite{Ferrand}). 
The residue field of the boundary point of $X'$ is an  extension of $F$ that is contained in the splitting 
extension of $\sfU$. Conversely, each such extension can be realized as the residue 
field of the boundary point of some semi-normal compactification of $\sfU$ \cite{Laurent}.


\begin{definition}  $\sfU$ is 
said to be of arithmetic genus $g$ if it admits a 
minimal regular compactification of arithmetic genus $g$. We say that 
$\sfU$  is 
 \emph{quasi-rational} (resp. \emph{quasi-elliptic}) if its  arithmetic genus 
 is equal to 
  $0$ (resp. $1$). 
\end{definition}

Since a regular model is unique, up to isomorphism,  
the arithmetic genus is a birational invariant of a one-dimensional 
$F$-wound unipotent group.

The genus formula \eqref{unigenus} suggests that there must be a bound on $p$ for a fixed $g$. 
In fact, it is known that the arithmetical genus of a regular curve can drop under an 
inseparable extension of the ground field  only if 
$$p\le 2g+1$$
\cite{Tate} (see also a nice proof in \cite[Lemma 9]{Nick}).
For example, genus 1 unipotent group may exist only if $p = 2,3$. 

The following Proposition was first proven by Rosenlicht \cite{Rosenlicht}.
\begin{proposition}
A $F$-wound one-dimensional unipotent group is quasi-rational if and only if it is isomorphic to a subgroup of 
$\bbG_{a,F}^2$ given by equation 
\beq\label{ratunipgroup}
u^2+v+av^2 = 0,
\eeq
where $a$ is not a square in $F$. In particular, a quasi-rational $F$-wound  unipotent group exists only if 
$p =2$.
\end{proposition}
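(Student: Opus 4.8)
The plan is to use the genus formula \eqref{unigenus} together with Russell's normal form \eqref{russell1} and the characteristic bound $p \le 2g+1$ from \cite{Tate}, here specialized to $g=0$, giving $p \le 1$ — wait, that gives nothing — so instead I must argue more carefully: a quasi-rational group has $g=0$, and I want to show $p=2$ and extract the displayed equation. Concretely, I would start from a Russell equation $\Phi(u,v) = u^{p^n}+v+a_1v^p+\cdots+a_mv^{p^m}=0$ with $n \le m$ (and $n \ge 1$ since $\sfU$ is $F$-wound, so not $\cong \bbG_{a,F}$), take its weighted homogeneous compactification \eqref{whcompnm}, pass to the normalization if necessary, and compute the arithmetic genus. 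Setting the genus formula equal to $0$ forces $\min\{m,n\}=1$ or $\max\{m,n\}=2$ — more precisely, from $p_a = \half(p^{\min\{m,n\}}-1)(p^{\max\{m,n\}}-2)$ one needs one of the two factors to vanish: either $p^{\min\{m,n\}}=1$, impossible since $\min\{m,n\}=n\ge 1$, or $p^{\max\{m,n\}}=2$, forcing $p=2$ and $\max\{m,n\}=1$, hence $m=n=1$.

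With $p=2$ and $m=n=1$ the Russell equation becomes $u^2 + v + a_1 v^2 = 0$, which is \eqref{ratunipgroup} with $a = a_1$ (after renaming). It remains to check the condition that $a$ is not a square in $F$: this is exactly the requirement that $\sfU$ be nontrivial as a form of $\bbG_{a,F}$, equivalently $F$-wound. By Theorem \ref{russell}, the splitting field is $F(a_1^{1/2})$ (taking $p^{-1/n}$-powers with $n=1$, so $a_1^{1/p} = a_1^{1/2}$), which is a proper extension of $F$ precisely when $a_1 \notin F^2$; if $a_1 \in F^2$, say $a_1 = c^2$, the substitution $v \mapsto v$, $u \mapsto u + cv$ linearizes $\Phi$ to $u'^2 + v$, exhibiting $\sfU \cong \bbG_{a,F}$, contradicting woundedness. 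Conversely, when $a_1 \notin F^2$ the principal part $u^2 + a_1 v^2$ has no nonzero zero in $F^2$, so $\sfU$ is $F$-wound by Lemma \ref{lem:conrad2}, confirming the equivalence and in particular that such groups exist only in characteristic $2$.

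One subtlety to address is that the genus formula \eqref{unigenus} was derived for the compactification \eqref{whcompnm}, which is regular only when $a_m$ is not a $p$-th power in $F$; in general one must normalize first, and the normalization can have strictly smaller arithmetic genus. So I should phrase the argument as: $g$ is the arithmetic genus of the \emph{regular} (minimal) compactification, which is at most $\half(p^{\min}-1)(p^{\max}-2)$, and equals it when the weighted homogeneous model is already regular. For the "only if" direction I want: if $g=0$ then since $p_a$ of the regular model is $0$ and the regular model dominates nothing smaller, we still need the interplay — cleanest is to invoke that any $F$-wound one-dimensional group with a regular compactification of genus $0$ must, over a splitting field, become $\bbP^1$, and then run the Russell-equation analysis directly with the genus bound to pin down $p=2$, $m=n=1$. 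The main obstacle I anticipate is handling this normalization gap cleanly — ensuring that genus $0$ of the \emph{regular} model, not just of the naive weighted compactification, really does force $\min\{m,n\}$ and $\max\{m,n\}$ down to $1$ — rather than any genuinely deep input; once the equation is in the form $u^2+v+av^2=0$, the squareness condition on $a$ is a short linear-algebra check as above.
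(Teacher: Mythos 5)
Your ``if'' direction is fine: for $p=2$, $m=n=1$ the weighted homogeneous compactification is the conic $t_1^2+t_0t_2+at_2^2=0$, which is regular exactly when $a\notin F^2$, so \eqref{unigenus} applies and gives genus $0$, and Lemma \ref{lem:conrad2} gives woundness. The problem is the ``only if'' direction: the ``normalization gap'' you flag at the end is not a presentational subtlety but the actual content of the statement, and your proposal does not close it. Formula \eqref{unigenus} computes the arithmetic genus of the weighted homogeneous compactification attached to one chosen Russell equation; the regular compactification is its normalization, so \eqref{unigenus} is only an upper bound for $g$, and it depends on the presentation, not just on the group. Concretely, in characteristic $2$ the group $V(u^2+v+bv^2+a^2v^4)$ with $b\notin F^2$ and $a\ne 0$ is $F$-wound and isomorphic to $V(u^2+v+bv^2)$ via $u\mapsto u+av^2$, hence quasi-rational; yet this is a perfectly good Russell equation with $n=1$, $m=2$, for which \eqref{unigenus} gives $1$, not $0$. (Case 1(c) of Proposition \ref{queen}, where the naive genus is $3$ but the regular model has genus $1$, shows moreover that one cannot always pass to a Russell equation whose weighted model is regular.) So from ``$\sfU$ is quasi-rational'' you cannot conclude that some Russell equation satisfies $\half(p^{\min\{m,n\}}-1)(p^{\max\{m,n\}}-2)=0$; that is precisely what needs to be proved, and neither the genus formula nor the bound $p\le 2g+1$ (vacuous for $g=0$, as you note) supplies it. In particular even the conclusion $p=2$ is not established by your argument.

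The paper's proof avoids Russell equations entirely at this point. Since $g=0$ and $\sfU$ has a rational point, the regular compactification is $\bbP_F^1$; the translation action extends to $\bbP_F^1$ fixing the boundary point $P_\infty$, and the fixed locus of a nontrivial element of $\PGL_2(F)$ is cut out by a quadratic equation, so $\deg P_\infty\le 2$; woundness rules out degree $1$, so $P_\infty$ has degree $2$ with purely inseparable residue field, which forces $p=2$; finally the linear system $|P_\infty|$ embeds $\bbP_F^1$ as a conic meeting a line in a single inseparable point, and normalizing the conic's equation yields \eqref{ratunipgroup}. If you want to keep the computational flavor of your approach, you need some such geometric input on the regular model (namely, that the boundary point has degree $2$) before the genus formula can do any work.
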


\begin{proof} Any regular curve of genus $0$ over $F$ that has a rational point in $F$ is 
isomorphic to $\bbP_F^1$. We know that the action of 
$\sfU$ on itself extends to an action on $\bbP^1$ fixing the point 
$P_\infty = \bbP^1\setminus \sfU$. We assume that the image of $0\in \sfU$ has homogenous coordinates 
$(t_0:t_1)$ equal to $(0:1)$. Thus there is a 
non-trivial transformation $T:(t_0:t_1)\to (\alpha t_1+\beta t_0:\gamma t_1+\delta t_0)$ that fixes $P_{\infty}$. 
But the residue field of any fixed point  of $T$
 is of degree $\le 2$ over $F$. 
 Since $\sfU$ is $F$-wound, $P_\infty$ is not a rational point (in this case the complement is $\bbA_F^1$). Thus 
 $\sfU$ is the complement of a point of degree $2$. Since it is inseparable, $p = 2$. The linear system 
 $|P_{\infty}|$ maps $\bbP_F^1$ isomorphically to a conic in $\bbP_F^2$ and there exists a line  
 defined over $F$  
that intersects the conic at one point equal to the image of $P_{\infty}$. 
We choose projective coordinates $(t_0:t_1:t_2)$ in the plane such that the equation of the line is 
$t_0 = 0$ and the image of $P_\infty$ is the point $(0:0:1)$. It is easy to see that the  equation
of  the conic can be reduced to the form
$t_1^2+t_0t_2+at_2^2 = 0$. In affine coordinates $u = t_1/t_0,v= t_2/t_0$, we obtain the asserted equation of $\sfU$.

\end{proof}
We can also find an explicit  formula for the group law on a quasi-rational unipotent group. 
Let $(u,v) = (\frac{s}{1+as^2}, \frac{s^2}{1+as^2})$ be a rational parameterization of $\sfU$. 
Then the group law is given in terms of the parameter $s$ by the formula
\beq\label{grouplaw}
s_1\oplus s_2 = \frac{s_1+s_2}{1+as_1s_2}.
\eeq
The group is isomorphic to $\bbG_a$ only if $a$ is a square in $F$. 

Let us now find equations of quasi-elliptic curves. The following proposition is due to C. Queen \cite{Queen}.

\begin{proposition}\label{queen} A unipotent group is quasi-elliptic if and only if its Russell equation is one of the following.
\begin{enumerate}
\item $p = 2$
\begin{itemize}
\item[(a)] $u^2+v+a_1v^2+a_2v^4 = 0,\  a_2^{1/2}\not\in F, [F(a_1^{1/2},a_2^{1/2}):F] = 2$;
\item[(b)]  $u^4+v+av^2 =  0,\  [F(a^{1/2}):F] = 2$;
\item[(c)] $u^4+v+av^2+b^2v^4 = 0, \ [F(a^{1/2},b^{1/2}):F] = 4$.
\end{itemize}
\item $p = 3$
$$u^3+v+av^3  = 0,\  [F(a^{1/3}):F]= 3.$$
\end{enumerate} 
\end{proposition}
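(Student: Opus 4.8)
The plan is: reduce to $p\in\{2,3\}$ via Tate's bound, present $\sfU$ by a Russell equation, identify the minimal regular compactification with the normalization over $F$ of the weighted homogeneous compactification, and control the arithmetic genus by the length of the conductor of that normalization. Since $\sfU$ is $F$-wound but $\sfU_{\bar{F}}\cong\bbG_{a,\bar{F}}$ has arithmetic genus $0$, the genus of a regular compactification drops under an inseparable extension of $F$, so $p\le 2g+1=3$ by \cite{Tate}; hence $p\in\{2,3\}$. Write $\Phi=u^{p^{n}}+v+a_{1}v^{p}+\cdots+a_{m}v^{p^{m}}$ as in Theorem~\ref{russell}, with $m,n\ge 1$ (for $m=0$ or $n=0$ the group is $\bbG_{a,F}$). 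Let $X_{0}$ be the weighted homogeneous compactification of \eqref{whcompnm}/\eqref{whcompmn}. Recalling that every compactification of $\sfU$ is unibranched and that a regular one is automatically minimal with a single boundary point, the $F$-normalization $q\colon X\to X_{0}$ is regular, is a compactification of $\sfU$ with one boundary point $P_{\infty}$ on which $\sfU$ acts trivially, hence is the unique minimal regular compactification; so, from $0\to\calO_{X_{0}}\to q_{*}\calO_{X}\to\calQ\to0$ with $\calQ$ supported at $P_{\infty}$,
$$g=p_{a}(X)=p_{a}(X_{0})-\delta,\qquad \delta=\dim_{F}\calQ=\dim_{F}\bigl(\widetilde{\calO}_{X_{0},P_{\infty}}/\calO_{X_{0},P_{\infty}}\bigr),$$
while $p_{a}(X_{0})=\half(p^{\min\{m,n\}}-1)(p^{\max\{m,n\}}-2)$ by \eqref{unigenus}. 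I will also use that $\kappa(P_{\infty})=F(a_{m}^{1/p^{n}})$ and that $X_{0}$ is regular (equivalently $\delta=0$) precisely when $u^{p^{n}}+a_{m}v^{p^{m}}$ is reduced over $F$, i.e.\ $a_{m}\notin F^{p}$, as recorded after \eqref{unigenus}.

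For $p=3$: $p_{a}(X_{0})=\half(3^{\min}-1)(3^{\max}-2)$ equals $1$ iff $m=n=1$ and is $\ge7$ otherwise. If $m=n=1$ then $\Phi=u^{3}+v+av^{3}$, $\Phi^{\pp}=u^{3}+av^{3}$; by Theorem~\ref{regcomp} the group is $F$-wound iff $\Phi^{\pp}$ has no nonzero $F$-rational zero, which in characteristic $3$ means $a\notin F^{3}$, equivalently $[F(a^{1/3}):F]=3$; then $u^{3}+av^{3}$ is reduced over $F$, so $\delta=0$ and $g=p_{a}(X_{0})=1$, giving case (2). One must still rule out $(m,n)\ne(1,1)$, i.e.\ show $\delta<p_{a}(X_{0})-1$ there; this comes from the explicit local computation of $\delta$ from the Russell coefficients (one efficient organizing principle being that for $g=1$ one has $s=1$ in the sense of \cite{Achet}, so $\sfU$ already splits over $F^{1/p}$, which strongly restricts how far the $a_{i}$ lie from $F^{p}$).

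For $p=2$: here $p_{a}(X_{0})=\half(2^{\min}-1)(2^{\max}-2)$ is $1$ for $\{m,n\}=\{1,2\}$, is $3$ for $m=n=2$ or $\{m,n\}=\{1,3\}$, and is $\ge7$ otherwise; the $\delta$-computation then leaves exactly $\{m,n\}=\{1,2\}$ with $\delta=0$ and $m=n=2$ with $\delta=2$. For $\{m,n\}=\{1,2\}$: taking $n=1$ the equation is $u^{2}+v+a_{1}v^{2}+a_{2}v^{4}$ with $a_{2}\notin F^{2}$ (so $X_{0}$ is already regular, $\delta=0$, $g=1$), and Russell's Proposition 2.3 of \cite{Russell} on equivalence of Russell equations shows that up to isomorphism one may take $[F(a_{1}^{1/2},a_{2}^{1/2}):F]=2$ (a larger splitting degree being realized instead by the $n=2$ model); taking $n=2$ the equation is $u^{4}+v+av^{2}$, and Theorem~\ref{regcomp} forces $[F(a^{1/2}):F]=2$. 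For $m=n=2$: the equation reduces to $u^{4}+v+av^{2}+b^{2}v^{4}$, where $\Phi^{\pp}=u^{4}+b^{2}v^{4}=(u^{2}+bv^{2})^{2}$ is non-reduced over $F$ (given $b\notin F^{2}$), $X_{0}$ is not regular, one computes $\delta=2$, so $g=3-2=1$, and Theorem~\ref{regcomp} together with irredundancy (again via \cite{Russell}) force $[F(a^{1/2},b^{1/2}):F]=4$. This yields (a), (b), (c). Conversely each listed equation is checked by the same formulas to define an $F$-wound group of arithmetic genus $1$.

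The main obstacle is the local computation of $\delta=\dim_{F}\bigl(\widetilde{\calO}_{X_{0},P_{\infty}}/\calO_{X_{0},P_{\infty}}\bigr)$ in terms of the Russell coefficients. It is needed both to show $\delta$ is too small to force $g=1$ for all $(m,n)$ outside the short lists above — so that the Russell height $n$ is bounded (by $1$ in characteristic $3$, by $2$ in characteristic $2$) — and to evaluate $\delta$ exactly in the surviving cases. The delicate point, special to imperfect $F$, is that $X_{0}$ may be \emph{regular over $F$ while being geometrically cuspidal}, so that $\delta$ is not the geometric $\delta$-invariant but is governed by its interplay with the purely inseparable residue field extension $F(a_{m}^{1/p^{n}})/F$; keeping track of this, and then using Russell's equivalence relation on Russell equations to reduce the surviving families to an irredundant list, is where the genuine work lies.
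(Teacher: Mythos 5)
Your proposal is a strategy outline rather than a proof: the decisive steps are named but not carried out, and you say so yourself (``the main obstacle,'' ``where the genuine work lies''). Concretely, three things are missing. First, in the ``if'' direction you assert $\delta=2$ for case 1(c) without computing it; the paper does this by an explicit normalization (setting $t=(y^{2}+c)/x$ in $y^{4}+x^{3}+a_{1}x^{2}+c^{2}=0$, deriving $t^{2}=x+a_{1}$ and the regular cubic $y^{2}+t^{3}+a_{1}t+c=0$), and some such computation is unavoidable since the genus formula alone gives $p_{a}(X_{0})=3$ there. Second, and more seriously, the ``only if'' direction hinges entirely on showing that for all $(m,n)$ outside your short lists the conductor length $\delta$ cannot reduce $p_{a}(X_{0})\ge 3$ (resp.\ $\ge 7$ when $p=3$) to $1$; you do not prove this, and the ``organizing principle'' you offer (that the pre-height is $1$ for $g=1$) is not available at this point --- in the paper that fact is deduced \emph{from} the classification, so invoking it here is circular. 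Third, the normal-form conditions on the splitting degrees ($[F(a_{1}^{1/2},a_{2}^{1/2}):F]=2$ in (a), $=4$ in (c)) are asserted via an unspecified appeal to Russell's Proposition 2.3; the reductions needed here (e.g.\ rewriting $u^{4}+v+av^{2}+b^{2}v^{4}$ as case 1(b) when $b^{1/2}\in F(a^{1/2})$ via $b=a\alpha^{2}+\beta^{2}$ and an explicit change of variables) are genuine computations that the paper performs or delegates to Queen.

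It is worth noting that your route for the hard direction is genuinely different from the paper's: you propose to classify directly from the Russell presentation by controlling $p_{a}(X_{0})-\delta$ over all $(m,n)$, whereas the paper starts from a Weierstrass equation of the regular genus-one compactification and cites Queen for the reduction to the four normal forms. Your route would be self-contained and arguably more illuminating if completed, but as written the central $\delta$-computation --- which is exactly the content being outsourced to \cite{Queen} and \cite{Russell} --- is absent, so the argument does not close.
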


\begin{proof} All these equations are Russell equations, so we can use 
 the weighted homogeneous compactification to check the genus. The genus formula gives that 
 $p_a(X)$ is indeed equal to 1 except in case 1(c), where it is equal to 3. Since $a_m$ is not a 
 $p$-th power in case $p_a(X) = 1$, the compactification is regular, hence the genus is 1. In case 1(c), 
 $a_m$ is a $p$th power, and the compactification is not regular. We use 
 its equation from \eqref{whcompnm} with $n = m = 4$. In affine coordinates $x = t_0/t_2.y = t_1/t_2$, the equation is 
 $y^4+x^3+a_1x^2+c^2 = 0$. Let $t = \frac{y^2+c}{x}.$ Then $t^2 = x+a_1$ shows that $t$ belongs to the 
 integral closure of the coordinate ring  of the affine curve in its fraction field. 
 Substituting $x = t^2+a_1$ in the equation, we obtain the equation 
 $y^2+t^3+a_1t+c = 0$. This shows that the normalization of $X$ is a regular plane cubic curve of 
 arithmetic genus 1 with equation 
 $$t_0t_2^2+t_1^3+at_1t_0^2+ct_0^3 = 0.$$
  It contains $\sfU$ as an open subset $D(t_1^2+a_1)$. The residue field of the boundary is 
  $F(a_1^{1/2},c^{1/2})$. If   
  $a$ is  a square, the change $u\mapsto u+av+bv^2$ transforms the equation to 
  $u^2+v = 0$, so the genus is zero. If $b^{1/2}\in F(a^{1/2})$, then $b = a\alpha^2+\beta^2$ for some
  $\alpha,\beta\in F$. Replacing $y$ with $y+\beta$, we may assume that $\beta = 0$. Then setting 
  $v' = \frac{a}{x^2+a}, u' = \frac{x+\alpha^2}{y+\alpha x}$, we obtain an equation 
  $v'{}^2+v'+au'{}^4$ which is reduced to case 1(b) after multiplyting it 
  by $a^{-1}$ and replacing  $v'$ with $a^{-1}v'$. 
  
  We refer to \cite{Queen} for proving that starting from a Weierstrass equation 
  $t_2^2t_0+t_1^3+at_0^2t_1^2+bt_0^3  = 0$ of a regular genus one curve, if $p = 2$, one can reduce it 
  to the case 1(a)    
  and $a$ is a square, to case 1(b) if $a$ is not square but $[F(a^{1/2},b^{1/2}):F] = 2$ and to case 1(c) if 
  $[F(a^{1/2},b^{1/2}):F] = 4.$ If $p = 3$, then $a = 0$  and, using affine coordinates 
  $u = t_0/t_2, v= t_1/t_2$, we obtain case 2.
  \end{proof}

 \begin{remark} 
 The notion of genus is not conserved under inseparable extensions. 
 In case 1 (c) from the previous Proposition, the genus of $\bar{\sfU}$ is 
 equal to one. 
 However, after we adjoin $a_2^{1/2}$, the group becomes of 
 genus $0$.
 \end{remark}

Let $X$ be a regular proper geometrically reduced and geometrically irreducible curve of genus $g > 0$ over a field $F$. 
The (generalized) Jacobian $\Jac(X)$ is the connected component of the identity 
$\mathbf{Pic}_{X/F}^\circ$ of the Picard scheme $\mathbf{Pic}_{X/F}$ of $X$. It is a connected 
commutative algebraic group of dimension $g$. It is an abelian variety  if and only if $X$ is smooth. 
If $X$ is not smooth, then $\Jac(X)$ does not contain neither $\bbG_{a,F}$ nor $\bbG_{m,F}$ 
\cite[9.2, Proposition 4]{Bosch}. If $X_{\bar{F}}$ is a unibranched rational curve, $\Jac(X)$ is an $F$-wound 
unipotent group \cite[4.1]{Achet}. If the geometric genus of $X_{\bar{F}}$ is positive it could be a pseudo-abelian variety 
\cite[Example 3.1]{Totaro}.

In particular, if  $X = \bar{\sfU}$ is a regular compactification of an $F$-wound unipotent group $\sfU$  of genus $g$, 
then $\Jac(X)$ is an $F$-wound unipotent group of dimension $g$ that contains $\sfU$ \cite{Achet}.

The  
degree homomorphism 
$\deg:\bfpic_{X/F}\to (\bbZ)_F$ of group schemes over $F$ is surjective because $X$ has a rational point. 
The restriction homomorphism $r:\bfpic_{X/F}\to \bfpic_{\sfU/F}$ is also surjective because $X$ is regular.
The kernel of $r$ is an infinite cyclic group generated by the invertible sheaf $\calO_X(P_\infty)$. 
Its image under the homomorphism $\deg$ is a cyclic subgroup of $(\bbZ)_F$ generated by $p^k$, where 
$p^k = \deg(P_\infty)$  equal to the minimal degree of the splitting 
extension of $\sfU$. This leads to an exact sequence
$$0\to \bfpic_{X/F}^0\to \bfpic_{\sfU/F} \to (\bbZ/p^k\bbZ)_F\to 0,$$
of algebraic groups \cite[Theorem 6.10.1]{Kambayashi}.

Let $n'$ be the smallest positive integer such that $\sfU_{F^{-p^{n'}}}$ is isomorphic to $\bbG_a$ or a 
unipotent group of genus $0$. 
We call it the \emph{pre-height} of $\sfU$. Obviously, $n'\le n$

The following theorem is proved in \cite[Theorem 4.4]{Achet}.

\begin{theorem}\label{thm:ptorsion} $\Jac(X) = \bfpic^0(X)$ is a $p^{n'}$-torsion $F$-wound unipotent group, where
$n$ is the pre-height of $\sfU$. 
\end{theorem}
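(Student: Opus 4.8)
The plan is to reduce the statement to the case where $\sfU$ is already split by a cyclic purely inseparable extension and then exploit the explicit Russell equation together with the exact sequence relating $\bfpic_{X/F}$, $\bfpic_{\sfU/F}$ and $(\bbZ/p^k\bbZ)_F$ displayed just above. First I would set $F' = F^{-p^{n'}}$, so that by definition of the pre-height $\sfU_{F'}$ is isomorphic to $\bbG_{a,F'}$ or to a genus-$0$ unipotent group over $F'$. In either case $\Jac(X_{F'})$ is trivial: if $\sfU_{F'} \cong \bbG_{a,F'}$ then $X_{F'}$ becomes rational and its normalization is $\bbP^1_{F'}$, while if it is genus $0$ then $X_{F'}$ is still a genus-$0$ $F'$-wound curve whose Jacobian vanishes since there are no positive-dimensional unipotent groups in genus $0$ other than possibly $\Jac$ itself, and by \eqref{canclass}/\eqref{unigenus} that genus is $0$. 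The key point is therefore that the base change map $\Jac(X) \to \Jac(X_{F'})$ is the zero map, so the group $\Jac(X)(\bar F)$ is killed by the map inducing the $p^{n'}$-power Frobenius twist, which on a unipotent group means it is $p^{n'}$-torsion.

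More precisely, the second step is to make the torsion bound rigorous using functoriality of $\bfpic$. For a purely inseparable extension $F'/F$ of exponent $n'$ — that is, $(F')^{p^{n'}} \subseteq F$ — the composite
$$
X \xrightarrow{\ \text{Frob}^{n'}\ } X^{(p^{n'})} \lra X_{F'} \lra X
$$
of relative Frobenius and base-change morphisms induces on $\bfpic^0$ the multiplication-by-$p^{n'}$ map (this is the standard fact that relative $n'$-fold Frobenius composed with the canonical descent is $[p^{n'}]$ on Picard). Since $\bfpic^0(X_{F'}) = \Jac(X_{F'})$ is trivial, the middle arrow factors through $0$, hence $[p^{n'}] = 0$ on $\Jac(X) = \bfpic^0_{X/F}$. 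This gives the torsion statement. That $\Jac(X)$ is an $F$-wound unipotent group is already recorded in the excerpt (it is the cited result of Achet, following from \cite[9.2, Proposition 4]{Bosch} and \cite[4.1]{Achet} once one knows $X_{\bar F}$ is unibranched rational, which is Proposition on unibranchedness above).

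The third step, needed for the full strength of the statement as opposed to just an upper bound $p^{n'}$, is to see the exponent is exactly $p^{n'}$ and not smaller, i.e. that $[p^{n'-1}]$ is nonzero on $\Jac(X)$ when $n' \ge 1$. Here I would argue by contradiction: if $\Jac(X)$ were $p^{n'-1}$-torsion, then the same Frobenius-descent identity applied with $F'' = F^{-p^{n'-1}}$ would force $\bfpic^0(X_{F''})$ to receive $\Jac(X)$ via a map that is simultaneously surjective (base change along $F''/F$ is faithfully flat, and $\bfpic$ commutes with it for a regular curve with rational point) and zero, hence $\Jac(X_{F''}) = 0$; but then $X_{F''}$ has genus $0$, contradicting minimality of $n'$ in the definition of pre-height. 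Combining with the upper bound, $\Jac(X)$ has exponent exactly $p^{n'}$.

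The main obstacle I anticipate is the clean identification "relative $n$-fold Frobenius $\circ$ descent $= [p^n]$ on $\bfpic^0$" in the generality of a non-smooth regular curve over an imperfect field: one must check that $\bfpic^0_{X/F}$ is representable and that the pullback along the absolute-versus-relative Frobenius square behaves as in the smooth case. For a geometrically integral projective curve with a rational point this representability is standard, and relative Frobenius $X \to X^{(p)}$ is finite flat of degree $p$, so the norm/pushforward argument goes through; but the bookkeeping between the Frobenius twist $X^{(p^{n'})}$ and the honest base change $X_{F'}$ — which agree precisely because $F'$ is purely inseparable of the right exponent — is the delicate point, and is exactly where the hypothesis that $n'$ is the pre-height (rather than the height $n$) enters. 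I would cite \cite{Achet} for the technical core and present the above as the conceptual skeleton.
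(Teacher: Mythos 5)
There is a genuine gap, and it sits exactly at what you call ``the key point.'' Setting $F' = F^{1/p^{n'}}$, the claim that $\Jac(X_{F'})$ is trivial is false. The Picard functor commutes with base change, so $\bfpic^0_{X_{F'}/F'}\cong \bfpic^0_{X/F}\times_F F'$ still has dimension $g=p_a(X)>0$: concretely, $X_{F'}$ is no longer normal (its singularities are concentrated at the boundary point), and its generalized Jacobian acquires a $g$-dimensional unipotent part from that singularity even though the normalization $\widetilde{X_{F'}}$ is $\bbP^1_{F'}$ or a regular genus-zero curve. For the same reason the pullback $\Pic^0(X)\to\Pic^0(X_{F'})$ is injective, not zero. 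So although your factorization $[p^{n'}]=(F^{n'}_{X/F})^*\circ w^*$ through $\Pic(X^{(p^{n'})})\cong\Pic(X_{F'})$ is correct, the middle group does not vanish and the argument does not close; the same conflation of $X_{F''}$ with its normalization undermines your third step as well (which is in any case unnecessary, since ``$p^{n'}$-torsion'' only asserts that the group is killed by $p^{n'}$).

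The missing idea is one further factorization: since $X$ is regular, hence normal, the relative Frobenius $X\to X^{(p^{n'})}$ factors through the normalization of $X^{(p^{n'})}$, and that normalization is (a twist of) the regular completion of $\sfU_{F'}$, whose $\bfpic^0$ really does vanish by the definition of the pre-height; composing, $[p^{n'}]$ factors through the zero group. Alternatively, and more cheaply, first embed $\bfpic^0_{X/F}$ into $\bfpic_{\sfU/F}$ via the displayed exact sequence $0\to\bfpic^0_{X/F}\to\bfpic_{\sfU/F}\to(\bbZ/p^k\bbZ)_F\to 0$ and run the Frobenius argument on the affine curve: $\sfU_{F'}=\sfU\times_F F'$ is smooth, hence equal to its normalization, and $\Pic(\sfU_{F'})$ is $0$ or $\bbZ/2\bbZ$, so the image of the connected group $\bfpic^0_{X/F}$ under $[p^{n'}]$ is trivial. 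Finally, be aware that the paper gives no argument for this statement at all --- it cites Achet's Theorem 4.4 --- so any proof you supply must stand on its own, and as written yours does not.
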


Note that the theorem agrees with our classification of  quasi-elliptic unipotent groups. 
Indeed, in this case the pre-height $n'$ is always equal to $1$.  

Recall from \cite[Definition B.3.1]{CGP} that the \emph{cckp-kernel} of a smooth connected unipotent group 
over $F$ is its maximal smooth connected $p$-torsion central subgroup. We do not know whether 
the cckp-kernel of $\Jac(X)$ coincides with $\sfU$.

\section{Integral models over a Dedekind scheme}\label{S4}
 From now on,  $C$ will denote a  
Dedekind scheme over an algebraically closed  field $\Bbbk$ of characteristic $p > 0$ obtained by 
localization, henselization or completion from a complete smooth curve $C$ over $\Bbbk$. 
Let $K$ denote the field of rational functions of $C$, i.e. 
the residue field of its generic point. We say that $C$ is local, if 
$C$ is the spectrum of a local ring $R$.

\begin{definition} Let $G_K$ be a smooth connected  algebraic  group over $K$. An \emph{integral 
$C$-model} of $G_K$ 
is a smooth  flat group scheme $C$-scheme $\bfG$ with general fiber isomorphic to 
$G_K$. It is called a \emph{minimal integral model} if for 
any integral model $\bfG'$ of $G_K$, any morphism 
$G_K'\to G_K$ extends to a morphism of group schemes $\bfG'\to \bfG$.
\end{definition}

\begin{proposition} An integral $C$-model $\bfG$ of smooth  affine commutative algebraic group $G_K$
 is a  
commutative affine  group scheme over $C$.
\end{proposition}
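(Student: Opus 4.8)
The plan is to show that $\bfG$ is affine and commutative by descending these properties from the generic fiber, using the fact that $C$ is a Dedekind scheme and $\bfG$ is smooth and flat. For affineness, I would first recall that $C$ is obtained by localization, henselization or completion of a smooth curve over $\Bbbk$, so it is either the spectrum of a field, or a spectrum of a (possibly henselian or complete) discrete valuation ring, or more generally a one-dimensional regular base; in the local case $C = \Spec(R)$ with $R$ a DVR, and in any case $C$ has dimension $\le 1$. The key input is that a flat, finitely presented, separated group scheme over a Dedekind base whose generic fiber is affine is itself affine: one can argue via Raynaud's theorem that a smooth separated group scheme of finite type over a Dedekind scheme with affine generic fiber (equivalently, with no abelian or torus part appearing in any fiber) is affine — concretely, since $G_K$ is unipotent hence affine, the closure of the identity component arguments together with fiberwise connectedness force every fiber of $\bfG$ to be affine, and a flat finitely presented group scheme over a Dedekind scheme with all fibers affine is affine. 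Alternatively, one embeds $G_K$ into some $\GL_{n,K}$ and spreads out the embedding over an open subscheme of $C$, then uses the valuative criterion and smoothness to extend over the finitely many missing closed points.

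For commutativity, I would use that the commutator morphism $\phi \colon \bfG \times_C \bfG \to \bfG$, $(x,y) \mapsto xyx^{-1}y^{-1}$, is a morphism of $C$-schemes that restricts on the generic fiber to the constant morphism with value the identity section, since $G_K$ is commutative. The target $\bfG$ is separated over $C$ (group schemes over a Dedekind base that are flat and of finite type are separated, or one includes separatedness in the definition of integral model via smoothness plus the construction), and $\bfG \times_C \bfG$ is flat over $C$ with schematically dense generic fiber because $C$ is reduced and irreducible and the total space is flat. Therefore the two morphisms $\phi$ and the constant morphism $e \circ \pi$ agree on a schematically dense open and, by separatedness of the target, agree everywhere; hence $\bfG$ is commutative.

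The main obstacle I expect is the affineness statement: ``flat group scheme over a Dedekind base with affine generic fiber is affine'' is true but genuinely requires either Raynaud's structure results (\cite{Bosch}, Chapter 7 or the Chevalley-type decomposition of fibers) or a hands-on spreading-out argument, and one must be careful that the closed fibers do not acquire a multiplicative or abelian part — this is guaranteed here because $\bfG$ is an \emph{integral model} in the sense of the definition, so it is smooth with connected unipotent generic fiber, and one still needs to rule out, say, a $\bbG_m$ or abelian subquotient appearing in a special fiber, which follows from flatness, properness-type arguments on the component group, and the fact that unipotence of the generic fiber forces the identity components of all fibers to be affine by semicontinuity of the reductive rank. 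Commutativity, by contrast, is a routine density-plus-separatedness argument once affineness (and hence finite type and separatedness) is in hand.
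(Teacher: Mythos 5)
Your proposal is correct in substance and, for the affineness part, follows essentially the same route as the paper: the paper reduces to the local case $C=\Spec(R)$ with $R$ a discrete valuation ring and then simply cites Anantharaman's Proposition 2.3.1, which is exactly the Raynaud--Anantharaman theorem you invoke (a flat, separated group scheme of finite type over a one-dimensional Dedekind base with affine generic fiber is affine). Note that this theorem does not require you to first verify that the closed fibers are affine or to rule out multiplicative or abelian parts in special fibers --- that is part of its conclusion --- so the paragraph where you worry about semicontinuity of the reductive rank and the component group is unnecessary, and your ``alternative'' spreading-out argument is the weakest part of the write-up (extending an affine model over the finitely many missing closed points is precisely the nontrivial content of Anantharaman's theorem, not something the valuative criterion hands you). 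For commutativity the paper instead cites Weisfeiler--Dolgachev (Theorem 1.2 of \cite{Weisfeiler}), whereas you give the direct argument: the commutator morphism $\bfG\times_C\bfG\to\bfG$ agrees with the constant unit morphism on the generic fiber, which is schematically dense by flatness over the integral base, so the two coincide by separatedness of the (now affine) target. This is a clean and self-contained substitute for the citation, and it is valid provided you run the two steps in the order you indicate, since separatedness of $\bfG$ is what you get from affineness; your parenthetical hedge about separatedness being part of the definition of an integral model is in fact needed to apply Anantharaman's theorem in the first place, a point the paper itself glosses over.
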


\begin{proof} To prove the assertions we may assume  that $C = \Spec(R)$, where $R$ is a discrete valuation ring. 
The affine property  is proven in \cite[Proposition 2.3.1]{Anantharaman}. 
The commutativity is proven in \cite[Theorem 1.2]{Weisfeiler}.
\end{proof}

\begin{example}\label{ex1} Let $\sfU_K$ be a quasi-elliptic unipotent group over 
$K$ and $\bar{\sfU}_K$ be its minimal regular compactification of 
arithmetic genus one.  It follows from the theory of relative minimal models of two dimensional schemes that 
there exists a smooth 
projective flat scheme $f:J\to C$ over $C$  such that its generic fiber $J_K$ is isomorphic to 
$\bar{\sfU}_K$ and 
any birational morphism to any such $C$-scheme is an isomorphism. The closure of the cusp $P_\infty$ of $\sfU_K$ is a 
regular closed subscheme $\frakC$ of $J$  such that the restriction of $f$ to $\frakC$ is an 
inseparable finite morphism of degree $p = 2,3$. 

Let 
$$J^\sharp = \{x\in J:f\ \text{is smooth at $x$}\}$$
In particular, $\sfU_K = J_K^\sharp$ and $\frakC\subset J\setminus J^\sharp$. 
It follows from \cite[1.5, Proposition 1]{Bosch} that $J^\sharp\to C$ is the N\'eron model of 
$\sfU_K$. 
\end{example} 
 
\begin{example} Let $\sfU$ be an $F$-wound unipotent group over $F$ given by a Russell equation \eqref{russell1}. 
Assume that $F$ is the field $K$ of fractions of a discrete valuation ring $R$ with local parameter $\pi$. Then, 
replacing $u$ by $u\pi^{p^s}$ and $v$ by $v\pi^{p^{sn}}$ with sufficiently large $s$, one may assume that $a_i\in R$. 
This gives an integral model of $\sfU$.
\end{example}

It follows from the previous example that an integral model is not unique.  
Assume $C$ is local and let $G_K$ be any affine algebraic group over $K$ and  $G\to \Spec(R)$ be its 
integral model over $C = \Spec(R).$ 
For any proper subgroup $H$ of the closed fiber $G_t$ of $G$ defined by an ideal $I$  in the coordinate ring 
$R[G]$ of $G$, we consider the \emph{blow-up} $G^H$ of 
$H$ defined to be the spectrum of the subalgebra $R[\frakm^{-1}I]$ of $K[G] = R[G]\otimes_RK$. 
It is an integral model of $G$ and any integral model of $G_K$ that is mapped to $G$ with the image of the 
closed fiber equal to $H$. Any integral model of $G$ is obtained as the composition of the blow-ups and their inverses 
\cite{Waterhouse}. 


 
A minimal model of an abelian variety is its N\'eron model. 

\begin{definition} Let $G_K$ be a smooth   group scheme of finite type over $K$. A 
\emph{N\'eron model} of $G_K$ is a $C$-model $\bfG$ of $G_K$ which is smooth and of finite type satisfying 
the following universality property (the N\'eronian property):

For each smooth $C$-scheme $Y$ and each $K$-morphism $\sfU:Y_K\to G_K$, there is a unique 
$C$-morphism $u:Y\to \bfG$ extending $\sfU$.
\end{definition}

It follows from \cite[1.2, Criterion 9]{Bosch} that it is enough 
to check the weak N\'eronian property: 
for each point $c\in C$ and an \'etale $\calO_{C,c}$-algebra $R'$ with field of fractions 
$K'$, the canonical map $G(R')\to G_K(K')$ is surjective.

\begin{proposition} Let $S$ be an excellent  Dedekind scheme and $G_K$ be 
a smooth  commutative algebraic group  over the field $K$ of rational functions of $S$.
Assume that $G_K$ admits a regular compactification over $K$ and does not 
contain non-trivial subgroups unirational over $K$. Then $G_K$ admits a N\'eron model over $S$.
If $S$ is a regular algebraic curve over a field $F$, then the latter condition is necessary for the 
existence of a N\'eron model of $G_K$.
\end{proposition}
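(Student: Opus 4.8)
The plan is to reduce the existence statement to a complete discrete valuation ring, extract the structure of $G_K$ forced by the hypothesis, and then feed this into the boundedness criterion for Néron models; the necessity statement I would prove by restricting a hypothetical Néron model to the offending subgroup.

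For existence, I would first use that the formation of a Néron model is local on the base and compatible with completion, together with the excellence of $S$, to reduce to $S=\Spec R$ with $R$ an excellent (indeed complete) discrete valuation ring and $K=\operatorname{Frac}(R)$. Next I would analyse $G_K$. Writing $G_K$ as built from its anti-affine part and the affine quotient, one sees that the maximal subtorus of the affine part must be trivial, since every torus is unirational over its field of definition, and that the unipotent quotient contains no copy of $\bbG_{a,K}$, since $\bbG_{a,K}$ is unirational, hence is $K$-wound by Theorem~\ref{decomp}; likewise the anti-affine part contains no torus and no $\bbG_{a,K}$. Thus the hypothesis forces $G_K$ to be assembled out of abelian varieties and $K$-wound unipotent groups only, and — again applying the hypothesis to its subgroups — the same is true of every smooth connected subgroup of $G_K$.

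I would then invoke the boundedness machinery: a smooth separated $K$-group scheme of finite type which is $R$-bounded admits a Néron model of finite type over $R$, obtained by choosing a flat finite-type $R$-model, applying Néron's smoothening process (this is where excellence of $R$ is used) to get a weak Néron model, and passing to a genuine Néron model by means of the group law. So it remains to check that $G_K$ is $R$-bounded. Abelian varieties are $R$-bounded because they are proper; a $K$-wound unipotent group with no unirational subgroup is $R$-bounded — here the regular compactification supplied by hypothesis is essential: $G_K$ acts trivially on the boundary divisor, so a point of the group cannot specialize into the boundary, and one deduces that $G_K(K^{\sh})$ is a bounded set. Since $R$-boundedness is preserved under extensions and quotients, $G_K$ is $R$-bounded, and the Néron model exists. (When $\dim G_K=1$ one may argue more concretely, spreading the regular compactification to a regular arithmetic surface over $R$ by resolution of singularities and taking its smooth locus, exactly as in Example~\ref{ex1}; the boundedness argument is what replaces resolution in higher dimension.)

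For the necessity statement, let $S$ be a regular algebraic curve over $F$, let $\mathcal N$ be a Néron model of $G_K$ over $S$, and suppose toward a contradiction that $H_K\subseteq G_K$ is a nontrivial subgroup unirational over $K$. Taking the schematic closure of $H_K$ in $\mathcal N$ and smoothening it produces a Néron model of $H_K$ of finite type over $S$, hence over each local ring of $S$; so $H_K$ would be $R$-bounded for an excellent discrete valuation ring $R$. But a nontrivial unirational group is not $R$-bounded: if it has a torus part, that torus splits over a completion of $K$ at some point of $S$ and already $\bbG_{m,K}$ has infinite Néron component group there; if it is unipotent, one reduces to $\bbG_{a,K}$ or to a quasi-rational unipotent group, where restricting a dominant rational map from $\bbP^n_K$ to a general line and clearing denominators at the closed point exhibits $K$-points whose coordinates have arbitrarily negative valuation under any affine embedding. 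Either way $R$-boundedness fails, so $H_K=0$. The step I expect to be the real obstacle is the boundedness verification in the existence half — precisely, that $K$-woundedness of the unipotent constituents together with the absence of unirational subgroups yields $R$-boundedness, which is where the regular compactification and the triviality of the action on the boundary genuinely do the work; the structural reductions and the necessity argument are comparatively formal.
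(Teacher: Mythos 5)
You are not really competing with a proof here: the paper offers none. The line following the statement is ``This is \cite[10.3, Theorem 5]{Bosch}'', so the proposition is quoted from Bosch--L\"utkebohmert--Raynaud, and your outline does follow the broad architecture of their argument (localize to an excellent discrete valuation ring, reduce everything to $R$-boundedness, produce a weak N\'eron model by the smoothening process, and upgrade it to a N\'eron model using the group law). However, two of your key steps do not hold up as written.

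First, the boundedness of the $K$-wound unipotent constituents. Your mechanism --- ``$G_K$ acts trivially on the boundary divisor, so a point of the group cannot specialize into the boundary'' --- is false as stated: $\bbG_{a,K}$ acts on $\bbP_K^1$ fixing $\infty$ pointwise, yet every point of negative valuation specializes into the boundary. Trivial action on the boundary is a feature of \emph{every} minimal compactification (Theorem \ref{regcomp}) and by itself prevents nothing; even the absence of rational points on the generic boundary does not stop the closure of a section from meeting the closure of the boundary in the special fibre. The actual argument needs the regularity of the compactification and the absence of $\bbG_{a,K}$-subgroups in a much more delicate way, and this is precisely the crux you flagged but did not supply. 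Second, and more seriously, the necessity half is based on the claim that a nontrivial unirational group is never $R$-bounded. That is not true for wound unirational groups: the quasi-rational group $u^2+v+av^2=0$ of Section \ref{S3} is \emph{rational} over $K$ via $s\mapsto \bigl(s/(1+as^2),\,s^2/(1+as^2)\bigr)$, yet over $\Bbbk((t))$ with $a=t$ one computes $\nu(u)\ge 0$ and $\nu(v)\ge -1$ for every $K$-point, so its points have uniformly bounded denominators and the group is $R$-bounded at every place. Hence no local unboundedness argument can prove necessity; the obstruction caused by a unirational subgroup is a genuinely global phenomenon, which is exactly why the proposition asserts necessity only when $S$ is a curve over a field and not over a discrete valuation ring. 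Your reduction ``to $\bbG_{a,K}$ or to a quasi-rational unipotent group, where clearing denominators exhibits points of arbitrarily negative valuation'' therefore fails in the second case, and the necessity statement remains unproved in your proposal.
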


This is \cite[10.3, Theorem 5]{Bosch}. It follows that any $K$-wound unipotent group that 
admits a regular compactification over $K$ admits a N\'eron model over $S$. It is conjectured 
that the condition for the existence of a regular compactification can be dropped.

\begin{corollary}  Let $X_K$ be a regular geometrically reduced and irreducible proper curve of arithmetic genus 
$g > 0$ over $K$. Then the 
N\'eron model over $C$ of its Jacobian $\Jac(X_K)$ exists. 
\end{corollary}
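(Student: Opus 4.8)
The plan is to deduce this corollary directly from the immediately preceding Proposition (that is, \cite[10.3, Theorem 5]{Bosch}) by verifying its two hypotheses for the group $G_K = \Jac(X_K)$ over the Dedekind scheme $S = C$. First I would note that $C$ is excellent: it is obtained by localization, henselization, or completion from a smooth complete curve over an algebraically closed field, and each of these operations preserves excellence, so $C$ is an excellent Dedekind scheme as required. Then $\Jac(X_K) = \mathbf{Pic}^0_{X_K/K}$ is a smooth connected commutative algebraic group of dimension $g$ over $K$ (smoothness and representability of $\mathbf{Pic}^0$ hold here because $X_K$ is a geometrically reduced and geometrically irreducible proper curve with a rational point, being a regular curve over $K$ with smooth locus nonempty — or one invokes the general representability results for $\mathbf{Pic}$ of proper curves). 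So $\Jac(X_K)$ is an eligible input for the Proposition.

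Next I would check the first hypothesis: that $\Jac(X_K)$ admits a regular compactification over $K$. The simplest route is to observe that $\mathbf{Pic}^0_{X_K/K}$ is a closed subscheme of the full Picard scheme $\mathbf{Pic}_{X_K/K}$, and more to the point that the relevant structural decomposition of $\Jac(X_K)$ into a semi-abelian part and a unipotent part reduces the question to the two known cases. If $X_K$ is smooth, then $\Jac(X_K)$ is an abelian variety, which is already proper, hence its own regular compactification. If $X_K$ is not smooth, then by \cite[9.2, Proposition 4]{Bosch} $\Jac(X_K)$ contains neither $\bbG_{a,K}$ nor $\bbG_{m,K}$; combined with the structure theory this forces $\Jac(X_K)$ to be (an extension involving) an abelian variety and a $K$-wound unipotent group. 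The unipotent summand, being $K$-wound and $p$-torsion up to composition series, admits a regular (weighted homogeneous, or its normalization) compactification by Theorem~\ref{regcomp} and Proposition~\ref{normality}; assembling these gives a regular compactification of $\Jac(X_K)$. Alternatively, and more cleanly for a corollary, one may simply quote \cite{Achet} and the remarks in Section~2: when $X_{\bar K}$ is unibranched rational, $\Jac(X_K)$ is $K$-wound unipotent and the above applies, and otherwise it is a pseudo-abelian variety in Totaro's sense, which again admits such a compactification.

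For the second hypothesis I would show $\Jac(X_K)$ contains no non-trivial subgroup unirational over $K$. A unirational connected commutative subgroup would have to be, by the structure theory over the (imperfect) field $K$, either a non-trivial torus, or $\bbG_{a,K}$, or contain one of these after isogeny; but $\mathbf{Pic}^0$ of a proper geometrically reduced and geometrically irreducible curve contains no copy of $\bbG_{a,K}$ or $\bbG_{m,K}$ by \cite[9.2, Proposition 4]{Bosch}, and its only proper subvarieties that are unirational are abelian subvarieties, which are not unirational unless trivial (an abelian variety receives no non-constant map from a rational variety). Hence any unirational subgroup is trivial. Having verified all hypotheses, the Proposition yields a N\'eron model of $\Jac(X_K)$ over $C$, completing the proof.

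The main obstacle I anticipate is the careful justification of the regular-compactification hypothesis in the non-smooth case: one must be sure that the full Jacobian $\Jac(X_K)$ — not merely its unipotent part — genuinely has a regular compactification over the imperfect field $K$, which requires either invoking the structure theory of smooth connected commutative groups over imperfect fields together with Theorem~\ref{regcomp}, or citing Totaro's and Achet's results on pseudo-abelian and unipotent Jacobians. Everything else (excellence of $C$, representability and smoothness of $\Jac$, absence of unirational subgroups) is routine or quotable.
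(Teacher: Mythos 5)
Your overall strategy --- feeding $G_K=\Jac(X_K)$ into the preceding Proposition --- is natural, and your checks of excellence of $C$ and of the absence of non-trivial unirational subgroups (no $\bbG_{a,K}$ or $\bbG_{m,K}$ by \cite[9.2, Proposition 4]{Bosch}; abelian varieties and $K$-wound unipotent groups are not unirational) are essentially fine, if loosely worded. The genuine gap is in your verification of the first hypothesis, the existence of a \emph{regular} compactification of $\Jac(X_K)$ over $K$. Theorem \ref{regcomp} produces only a minimal compactification, and the paper states explicitly that the weighted homogeneous compactification is \emph{not} smooth: geometrically its singular locus is the whole boundary hyperplane section. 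Proposition \ref{normality} upgrades this to normality under a hypothesis on the principal part, but for a group of dimension $g\ge 2$ normality does not imply regularity, so these results do not furnish a regular compactification of a wound unipotent group of dimension $>1$; the remark right after the Proposition (``It is conjectured that the condition for the existence of a regular compactification can be dropped'') signals that this existence is not known in general. Your further steps compound the problem: the Chevalley-type splitting into an abelian part and a wound unipotent part is not available over the imperfect field $K$ (this is precisely the phenomenon of Totaro's pseudo-abelian varieties \cite{Totaro}), ``assembling'' regular compactifications of the pieces of a non-split extension into one of the whole group is not justified, and the claim that a pseudo-abelian variety admits a regular compactification is asserted without proof.

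The paper sidesteps all of this by a different route, indicated in the sentence immediately following the Corollary: by Raynaud \cite{Raynaud} (see also \cite[9.5, Theorem 4]{Bosch}), the N\'eron model of $\Jac(X_K)$ is obtained directly as the maximal separated quotient of the relative Picard functor of a proper flat \emph{regular} model $X\to C$ of the curve $X_K$, and such a model exists by the theory of relative minimal models of two-dimensional excellent schemes because $X_K$ itself is a regular proper curve. In other words, what gets compactified and spread out over $C$ is the curve, not its Jacobian. To repair your argument, either restrict to $g=1$ (where $\Jac(X_K)$ is one-dimensional and Proposition \ref{normality} does yield regularity) or replace the compactification step by the citation of Raynaud's construction.
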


In fact, in an  earlier influential paper \cite{Raynaud} 
 Raynaud proves that the N\'eron model of $\Jac(X_K)$ represents the maximal separated 
quotient of the relative Picard functor $\calP_{X/S}$ by an abelian 
subsheaf $\calE$ with fibers at closed points isomorphic $(\bbZ^{r_t})_t$, where $r_t$
is the number of irreducible components of $X_t$. (see also  \cite[9.5, Theorem 4]{Bosch}).

\begin{theorem} The groups scheme $\bfU = J^\sharp$ from Example \ref{ex1} is a N\'eron model of 
the   quasi-elliptic unipotent group $\sfU_K$. 
\end{theorem}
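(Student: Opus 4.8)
The plan is to verify that $\bfU=J^\sharp$ has the weak N\'eronian property and then apply \cite[1.2, Criterion 9]{Bosch}. By construction $J^\sharp$ is an open subscheme of the projective (hence proper and separated) $C$-scheme $J$, so it is separated and of finite type over $C$, it is smooth over $C$ by definition of $J^\sharp$, and its generic fibre is $J_K^\sharp=\sfU_K$: indeed $J$ is $C$-flat and $\sfU_K$ is exactly the $K$-smooth locus of the regular curve $\bar{\sfU}_K=J_K$. (That $\sfU_K$ admits a N\'eron model at all is already guaranteed by \cite[10.3, Theorem 5]{Bosch}, since $\sfU_K$ is $K$-wound and has the regular compactification $\bar{\sfU}_K$; the argument below re-proves this.) Thus it remains only to show surjectivity of $J^\sharp(R')\to\sfU_K(K')$ for every $c\in C$ and every \'etale $\calO_{C,c}$-algebra $R'$ with fraction field $K'$; we may assume $c$ is a closed point, so that $\calO_{C,c}$, and hence $R'$, is a discrete valuation ring.

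So fix such an $R'$ and a point $P\in\sfU_K(K')$. Since $\Spec R'\to C$ is \'etale, the base change $J_{R'}:=J\times_C\Spec R'$ is again regular (\'etale over the regular scheme $J$) and proper over $R'$, with generic fibre $\bar{\sfU}_K\times_KK'$ containing $\sfU_K\times_KK'$ as its $K'$-smooth locus. By the valuative criterion of properness, $P$ extends to a section $\sigma\colon\Spec R'\to J_{R'}$, which is the unique such extension and a closed immersion because $J_{R'}$ is separated. The step carrying the content is to check that $\sigma$ lands in the relative smooth locus. Put $x=\sigma(s)$ and let $\pi$ be a uniformizer of $R'$; then $\calO_{J_{R'},x}$ is a two-dimensional regular local ring. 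The kernel $I$ of $\calO_{J_{R'},x}\twoheadrightarrow R'$ has height one with regular quotient, hence $I=(t)$ with $t\notin\frakm_x^2$, and $\frakm_x=I+(\pi)$; since $\dim_{k(s)}\frakm_x/\frakm_x^2=2$ this forces $\pi\notin\frakm_x^2$, so the fibre local ring $\calO_{(J_{R'})_s,x}=\calO_{J_{R'},x}/(\pi)$ is regular. As $k(s)=\Bbbk$ is algebraically closed and $J_{R'}$ is $R'$-flat, $f$ is therefore smooth at $x$, i.e. $x\in J^\sharp$. Since $\Spec R'$ consists of $s$ and its generic point, both of which are sent into $J^\sharp$, the section $\sigma$ factors through $J^\sharp_{R'}$ and yields the required point of $J^\sharp(R')$ restricting to $P$; injectivity of $J^\sharp(R')\to\sfU_K(K')$ is automatic from separatedness.

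By \cite[1.2, Criterion 9]{Bosch}, $\bfU=J^\sharp$ now satisfies the N\'eron mapping property, hence is a N\'eron model of $\sfU_K$. The group-scheme structure comes for free: the multiplication, inversion and identity section of $\sfU_K$ are $K$-morphisms out of the smooth $C$-schemes $\bfU\times_C\bfU$, $\bfU$, and $C$, so by the mapping property they extend uniquely to $C$-morphisms, and the group axioms — equalities of $C$-morphisms out of $C$-flat smooth $C$-schemes with schematically dense generic fibre into the separated scheme $\bfU$ — hold because they hold over $K$. Hence $\bfU$ is a commutative group scheme and a N\'eron model of $\sfU_K$, unique up to isomorphism. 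I expect the only genuine obstacle to be the transversality computation of the middle paragraph; this is precisely the statement of \cite[1.5, Proposition 1]{Bosch} that a section of a regular proper model through a closed point meets the relative smooth locus, and everything else is an assembly of the cited criteria.
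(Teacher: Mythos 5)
Your proof is correct and follows essentially the same route as the paper: reduce to the weak N\'eronian criterion, extend a $K'$-point to a section of the proper regular model by the valuative criterion, and check that the section meets the closed fibre in the smooth locus. The only difference is cosmetic --- you base-change $J$ along the \'etale extension and carry out the local regularity computation explicitly, where the paper instead passes to the henselization and appeals to ``the usual properties of the intersection theory of Cartier divisors'' (i.e.\ \cite[1.5, Proposition 1]{Bosch}) for the same transversality fact.
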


\begin{proof} It suffices to assume that $C = \Spec(R)$ is local.  Each connected  component of an  \'etale  scheme $Y_K$ over $K$ is 
isomorphic to $\Spec(L)$, where 
$L$ is a separable field extension  of $K$. Since the residue field $\Bbbk$ of the unique closed point of $C$ is 
algebraically closed, the normalization $R'$ of $R$ in $L$ has the same residue field. Thus we can apply 
\cite[7.2, Theorem 1]{Bosch} and assume that $R$ is henselian. Then any separable irreducible 
$K$-point is isomorphic to $\Spec(K)$, i.e. defines a rational $K$-point of $\sfU_K$. Since 
$J$ is projective over $R$, it defines a section of $\Spec(R)\to J$. The usual properties of 
the intersection theory of Cartier divisors show that it 
intersects the closed fiber at a smooth point. Hence its defines a section of $J^\sharp$. This proves 
that $J^\sharp\to C$ is a N\'eron model of $\sfU_K$.
\end{proof}

\begin{remark} The group $G_K = \bbG_{a,K}$ admits a trivial integral model $\bbG_{a,C}$.  
However, 
it is not a  N\'eron model. In fact, assume $C = \Spec(R)$ is local with a local parameter $u$, 
the zero point of $G_K = \Spec(K[T])$ defined by $T= 1/u$ has schematic closure defined by the ideal 
$(uT-1)$ and hence does not extend to a $R$-point of $\bbG_{a,R}$. 
By Theorem 2.1 from loc.cit., any model of  $\bbG_{a,K}$ with connected fibers is 
isomorphic to $\bbG_{a,R}$.

The quasi-rational group $\sfU_K$  does not admit a N\'eron model over a local $C = \Spec(R)$. 
Any  local integral model over $R$ 
is given by $u^2+v+av^2 = 0$, where $a$ belongs to the maximal ideal $\frakm$ of $R$ \cite[Theorem 2.1]{Waterhouse}. 
Any $K$-rational point 
$(u,v) = (\frac{s}{1+as^2},\frac{s^2}{1+as^2}) \ne (0,0)$ extends to an integral point $(s_0s_1:s_1^2:s_0^2+as_1^2)$ 
of the projectivization $y^2+xz+ax^2 = 0$. However, it has the  point $(0:1:0)$ at infinity lying in the closed fiber.
So, it does not extends to a section of the open subset  $z\ne 0$ isomorphic to the integral model of $\sfU_K$.
\end{remark}

\begin{definition} A flat group scheme of finite type over a base scheme $S$  
is called \emph{unipotent} if its fibers 
are unipotent algebraic group schemes over the residue fields of the points.
\end{definition} 

Let us recall the relationship between the 
relative Picard sheaf $\calP_{X/C}$ (in \'etale topology) and the N\'eron model $\bfJ$ of $\Jac(X_K)$ (see \cite{Raynaud} and 
\cite[9.5]{Bosch}).

Let $\calP_{X/C}^0$ be the connected component of identity of $\calP_{X/C}$. It is a subsheaf 
of $\calP_{X/C}$ which consists of elements whose restriction to all fibers belongs to the identity 
component of the Picard scheme of the fiber. Although $\calP_{X/C}$ is not representable in general,  $\calP_{X/C}^0$ is representable by a separated scheme 
if $X\to C$ has a section. In fact, it coincides in this case with 
the identity  component $\bfJ^\circ$ of the N\'eron scheme of 
$\Jac(X_K)$. The N\'eron scheme $\bfJ$ coincides with the maximal separated quotient of the subscheaf 
$\calP_{X/C}'$ equal to the kernel of the total degree homomorphism $\deg:\calP_{X/C}\to (\bbZ)_C$.

\begin{theorem}\label{unipotency} Let $X_K$ be a regular completion of a unipotent group of genus $g> 0$ over $K$. 
Then the N\'eron model $\bfJ$ of $\Jac(X)$ is a unipotent group scheme over $C$. 
\end{theorem}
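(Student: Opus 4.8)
The plan is to reduce the statement to a fibral assertion and then verify unipotency on each closed fiber. Since unipotence of a flat group scheme is defined fiberwise (Definition just above), and since the generic fiber of $\bfJ$ is $\Jac(X_K)$, which is an $F$-wound unipotent group of dimension $g$ by the discussion following Theorem~\ref{thm:ptorsion} (it contains $\sfU_K$ and is a unipotent group by \cite{Achet}), it suffices to show that for each closed point $t\in C$ the closed fiber $\bfJ_t$ is a unipotent algebraic group over the residue field $\Bbbk(t) = \Bbbk$. Because $\Bbbk$ is algebraically closed, a smooth connected commutative algebraic group over $\Bbbk$ is unipotent if and only if it contains no subgroup isomorphic to $\bbG_{m,\Bbbk}$ and no nontrivial abelian variety as a quotient; equivalently, by the Chevalley/Barsotti structure theorem, its reductive quotient and its abelian quotient both vanish. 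So the real content is: the closed fibers of the N\'eron model acquire no torus part and no abelian-variety part.

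First I would use the identification, recalled just before the theorem (from \cite{Raynaud} and \cite[9.5]{Bosch}), of $\bfJ$ with the maximal separated quotient of the subsheaf $\calP'_{X/C} = \ker(\deg\colon \calP_{X/C}\to(\bbZ)_C)$, and of the identity component $\bfJ^\circ$ with $\calP^0_{X/C}$. Thus the identity component of the closed fiber $\bfJ^\circ_t$ is $\Pic^0_{X_t/\Bbbk}$, the identity component of the Picard scheme of the closed fiber $X_t$. Since $X\to C$ has genus-$g$ generic fiber equal to a \emph{unibranched rational} curve (a regular compactification of $\sfU_K$, whose base change to a splitting field becomes a rational curve, by the Propositions in Section~\ref{S3}), the closed fiber $X_t$ is a proper curve over $\Bbbk$ that is geometrically connected and whose normalization is rational — indeed $X_t$ is obtained from $X_K$ by specialization and shares its geometric genus $0$ structure, or one argues directly that since $X_{K^{1/p^\infty}}\to C$ specializes a rational curve, every fiber has geometrically rational components. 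I would then invoke the structure of $\Pic^0$ of a proper curve over an algebraically closed field: $\Pic^0_{X_t/\Bbbk}$ is an extension of an abelian variety (the product of Jacobians of the normalizations of the components) by a smooth connected affine group, which is itself an extension of a torus (controlled by the dual graph's first Betti number, i.e. the number of independent loops among the components) by a unipotent group (coming from the non-normality, i.e. cusps/tacnodes of unibranched type). Since each component of $X_t$ has rational normalization, the abelian part vanishes; since $X_t$ is unibranched at every singular point (this is the key geometric input — it follows as in the Proposition ``Any compactification $X$ of $\sfU$ is unibranched'' together with flatness/properness of $X\to C$, so the whole family is unibranched), the dual graph is a tree and the torus part vanishes. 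Hence $\bfJ^\circ_t = \Pic^0_{X_t/\Bbbk}$ is unipotent.

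It remains to handle the full fiber $\bfJ_t$, not just its identity component, and to handle the generic fiber and flatness bookkeeping. For the component group: $\bfJ_t/\bfJ^\circ_t$ is a finite \'etale group scheme over $\Bbbk$, hence a finite constant group; a finite group is ``unipotent'' as an algebraic group over $\Bbbk$ of characteristic $p$ precisely when it is a $p$-group. To see this I would use that $\bfJ$ is a quotient of $\calP'_{X/C}$ and that the component group of the special fiber of the N\'eron model of a Jacobian is computed (Raynaud) from the intersection matrix / the lattice $\bbZ^{r_t}$ of components of $X_t$ modulo the image of the degree-zero part; since $X_t$ has $r_t$ components meeting in a tree, this component group is a product of cyclic groups whose orders divide the multiplicities of the components, and these multiplicities are powers of $p$ because the generic fiber is killed by $p$ (the whole family is $p$-torsion on the smooth locus, cf. Example~\ref{ex1} where the closed fiber has multiplicity $p$). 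So the component group is a finite abelian $p$-group, hence unipotent. Combining: $\bfJ_t$ is an extension of a finite $p$-group by a smooth connected unipotent group, so it is unipotent, for every closed $t$; together with the generically unipotent fiber this gives the theorem by the fiberwise definition.

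The main obstacle I expect is the component-group/multiplicity bookkeeping — proving cleanly that the orders in $\bfJ_t/\bfJ^\circ_t$ are $p$-power and that no torus sneaks in when $X_t$ is reducible. The cleanest route is probably to avoid analyzing $X_t$ directly and instead argue that $\bfJ$, being a smooth separated $C$-group scheme whose generic fiber $\Jac(X_K)$ is unipotent (so in particular has no nontrivial characters and no semisimple generic fiber), cannot develop a torus or abelian subquotient in a closed fiber: a torus in $\bfJ_t$ would, by smoothness and the structure theory of N\'eron models (e.g. \cite[Chapter~7]{Bosch} on the unipotent-versus-torus dichotomy in the special fiber, or a direct argument that tori deform), force a nontrivial character generically or a nontrivial $\bbG_m$ specializing from the generic fiber, contradicting $F$-woundness; an abelian subvariety in $\bfJ_t$ would similarly have to lift. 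Making one of these two arguments — the direct Picard-scheme-of-$X_t$ computation, or the ``no torus/abelian part can appear'' deformation/character argument — fully rigorous is where the work lies; everything else is formal once one grants the fiberwise definition of unipotence and Raynaud's description of $\bfJ$.
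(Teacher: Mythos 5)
There is a genuine gap at the key step, namely ruling out a torus in the identity component $\bfJ_t^\circ$ of a closed fiber. You offer two mechanisms, and neither works as stated. First, you claim that the whole family $X\to C$ is unibranched with tree dual graphs ``by flatness/properness,'' deducing this from the proposition that the generic fiber $X_K$ is unibranched. Unibranchedness of the generic fiber does not propagate to special fibers: an elliptic fibration has smooth (hence unibranched) generic fiber but can have $I_n$ fibers whose dual graphs contain a loop, giving a $\bbG_m$ in $\Pic^0$ of the fiber. Nothing in the geometry you cite prevents the analogous degeneration here. Second, your fallback --- that a torus in $\bfJ_t$ would have to ``lift'' or force a character/$\bbG_m$ on the generic fiber --- is based on a false principle for N\'eron models: an abelian variety with multiplicative reduction has no torus in its generic fiber, yet its N\'eron model acquires a torus in the special fiber. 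So neither route closes the argument, and the conclusion that the fibers are in fact tree-like with unibranched singularities is an output of the theorem, not an input one can assume.

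The paper's proof supplies the missing mechanism, and it is worth seeing why it is the right one. It works with the total space: by Artin--Winters, the kernel of the restriction $\Pic(X)\to\Pic(X_t)$ is uniquely divisible by every $s$ prime to $p$, so ${}_s\Pic(X_t)\cong{}_s\Pic(X)$ for such $s$; on the other hand the kernel of $\Pic(X)\to\Pic(X_K)$ is free abelian (generated by fiber components), and by Theorem~\ref{thm:ptorsion} the degree-zero part of $\Pic(X_K)$ is $p^{n'}$-torsion. Hence ${}_s\bfJ_t^\circ={}_s\Pic^0(X_t)=0$ for all $s$ prime to $p$, and a connected commutative group over $\Bbbk$ with no prime-to-$p$ torsion can contain neither a torus nor an abelian variety, so it is unipotent. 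Note that the decisive input is the $p$-power torsion of $\Jac(X_K)$ (Theorem~\ref{thm:ptorsion}), which you mention but never actually use to kill the torus; your structural description of $\Pic^0$ of the fiber and your treatment of the component group are fine in outline, but the proof does not go through without replacing your two flawed steps by an argument of this kind.
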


\begin{proof} We may assume that $C = \Spec(R)$, where $R$ is a complete discrete valuation ring. 
It suffices to prove that $\bfJ^\circ$ is a 
connected unipotent group.  Since 
 $\Jac(X_K)(K)\ne \emptyset$,  the set $X(C)$ is also non-empty, and hence, by above 
 $\bfJ^\circ$ represents $\calP_{X/C}^0$.  Since there is a section, the degree homomorphism $\deg$ is 
 surjective. We also have an exact sequence 
 $$0\to \calP_{X/C}^0\to \calP_{X/C}'\to (\bbZ^{r-1})_t\to 0,$$
 where $r$ is the number of irreducible components of the closed fiber $X_t$ and $(\bbZ^{r-1})_t$ is the 
 skyscraper sheaf associated to the group $\bbZ^{r-1}$. It follows that the 
 closed fiber $\bfJ_t^\circ$ of $\bfJ^\circ$ is isomorphic to a subscheme of $\bfpic_{X_t/\Bbbk}$ with 
 quotient a free abelian group.

 Let $r_t:\Pic(X) = \calP_{X/C}(C) \to \Pic(X_t)$ be the restriction homomorphism. It is known that 
 $r_t$ is surjective and its kernel is a group uniquely divisible by any integer $s$ prime to 
$p$ \cite[Proposition 2.1]{Artin}. It follows that the $s$-torsion subgroup 
${}_s\Pic(X_t)$ of $\Pic(X_t)$ is isomorphic to the $s$-torsion subgroup ${}_s\Pic(X)$ of $\Pic(X)$. 
Since ${}_s\Pic(X_t) = {}_s\bfpic_{X_t/\Bbbk}^0$, it also coincides with ${}_s\bfJ_t^\circ$ (where, as always, 
we identify a smooth algebraic group over an 
algebraically closed field $\Bbbk$ with its group of $\Bbbk$-points). 

Let $
r_K:\Pic(X)\to \Pic(X_K)$ be the restriction homomorphism to the generic fiber. Its kernel is a free abelian group 
$\bbZ^r$ generated by irreducible components of $X_t$. By Theorem \ref{thm:ptorsion}, $\Pic(X_K)^0 = \Jac(X_K)(K)$ 
is a $p^{n'}$-torsion group. Let $\Pic(X)^0 = r_K^{-1}(\Pic(X_K)^0)$. It follows that 
${}_s\Pic(X)^0 = {}_s\Pic(X) = {}_s\bfJ_t^\circ = \{0\}$.

The only connected commutative algebraic group over an algebraically closed field that has this property is a vector group. 
This proves that $\bfJ^\circ$ is a unipotent group scheme over $C$.

\end{proof}

Let $\calF$ be a quasi-coherent  sheaf of $\calO_S$-modules over a scheme $S$.  The affine 
scheme $\bbV(\calF) = \Spec(\Sym^\bullet\calF)$  has a structure 
of a commutative group scheme over $S$. Its value on any $S$-scheme $a:T\to S$ is the group of 
global sections 
$\Gamma(T,a^*\calF^\vee)$, where $\calF^\vee$ is the dual sheaf of $\calO_S$-modules.  It is smooth if and only if 
$\calF$ is a locally free sheaf.  In this case it is called a \emph{vector group scheme}. A homomorphism of vector 
group schemes $\bbV(\calE_1)\to \bbV(\calE_2)$ corresponds to homomorphisms of locally free sheaves 
$\calE_2\to \calE_1$ or $\calE_1^\vee\to \calE_2^\vee$. The closed embeddings correspond to surjective homomorphisms.
 
Let $G$ be a commutative group scheme of finite type over a base $S$. Recall that its identity component $G^\circ$ is the open group subscheme 
whose fibers are the identity components of the fibers of $G$. We set 
$$\pi_0(G) = G/G^\circ$$
and call it the group of connected components. Its fiber over a closed point $s\in S$ is the group 
$\pi_0(G_s)$ of connected components of $G_s$.
 
 We also recall the definition of the Lie algebra $\Lie(G)$
 \cite{Demazure}, \cite{LLR}. 
 For any affine scheme scheme $T = \Spec(A)$, let  $T_\epsilon = \Spec(A[\epsilon]/(\epsilon^2))$ 
 with canonical projection 
 $T_\epsilon\to T$ defined by $a\mapsto a$ and a canonical section $T\to T_\epsilon$ 
 defined by $a+b\epsilon \mapsto a$. For any group $S$-scheme $G$, let 
 $\mathcal{L}ie(G)$ be the sheaf of abelian groups defined on category of affine $S$-schemes  
 by $\mathcal{L}ie(G)(A) = \Ker(G(T_\epsilon)\to G(T))$. 
 If $\Spec(R)$ is affine and $G = \Spec(\calO[G])$ is 
 affine, we have 
 $\mathcal{L}ie(G)(A) = \Der_R(\calO[G],A)$. The sheaf $\mathcal{L}ie(G)$ has a natural structure of an  
 $\calO_S$-module defined by $r\cdot (a+b\epsilon) = a+rb\epsilon$. We denote this $\calO_S$-module by 
 $\Lie(G)$. If $e:S\to G$ is the zero section of $G$,  
 \beq\label{deflie}
 \Lie(G)^\vee \cong e^*\Omega_{G/S}^1. 
 \eeq 
  For example, for any locally free sheaf $\calE$ of $\calO_S$-modules, we have 
  $\Omega_{\bbV(\calE)/S}^1 \cong a^*\calE$, where $a:\bbV(\calE)\to S$ is the natural projection, 
  and hence
 $e^*(\Omega_{\bbV(\calE)/S}^1) = \calE$ so that
  $$\Lie(\bbV(\calE)) = \calE^\vee.$$
 The formation $G\to \Lie(G)$ is a covariant functor on the category of group schemes over $S$. It is left exact. 
 
 \begin{theorem}\label{weisfeiler} Let $C$ be a local or global base,  \
$\sfU_K$ a genus $g > 0$ unipotent group over $K$, and 
$\bfU$  its N\'eron model. Then  $\bfU^\circ$ is isomorphic to a closed subgroup of a smooth group 
scheme  $\bfV$ locally isomorphic to a vector group scheme (not necessary a vector group scheme). 
\end{theorem}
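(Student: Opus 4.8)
The plan is to realize $\bfU^\circ$ inside a group scheme built from the N\'eron model $\bfJ$ of $\Jac(X_K)$, for which we already have structural information. By Theorem \ref{unipotency}, $\bfJ$ is a unipotent group scheme over $C$, and by Raynaud's description recalled above, $\bfJ^\circ$ represents $\calP_{X/C}^0$ when $X\to C$ has a section (which it does, since $\sfU_K$ has a rational point). Since $\sfU_K$ embeds as a closed subgroup of $\Jac(X_K)$ by the discussion following Theorem \ref{thm:ptorsion} (the group $\sfU_K$ sits inside its own Jacobian, cf.\ \cite{Achet}), and since N\'eron models are functorial for closed immersions of smooth groups over $K$ --- more precisely, by the N\'eronian property applied to the smooth $C$-scheme $\bfU$ and the composite $K$-morphism $\sfU_K\hookrightarrow \Jac(X_K)$ --- we get a $C$-morphism $\bfU\to \bfJ$ extending the inclusion, and restricting to identity components gives $\bfU^\circ\to \bfJ^\circ$. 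First I would check that this morphism is a closed immersion: it is a monomorphism on generic fibers, so by a standard spreading-out/flatness argument (using that $C$ is a Dedekind scheme and both are flat) it suffices to see it is a closed immersion on closed fibers, which reduces to the fact that $\sfU_t\hookrightarrow \bfJ_t^\circ$ is a closed immersion of unipotent groups over the algebraically closed residue field $\Bbbk$; over such a field a homomorphism of unipotent groups with trivial kernel is automatically a closed immersion.

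Thus it remains to embed $\bfJ^\circ$ (equivalently $\bfU^\circ$, as a closed subgroup) into a group scheme $\bfV$ that is Zariski-locally on $C$ isomorphic to a vector group scheme $\bbV(\calE)$. Here I would invoke the structure theory of smooth unipotent group schemes over a Dedekind base due to Weisfeiler \cite{Weisfeiler}, which is why the theorem is attributed to him: a smooth commutative unipotent group scheme over a discrete valuation ring admits a closed immersion into a group scheme that is, after an \'etale or Zariski localization, a vector group scheme. Concretely, over a local base $C = \Spec(R)$ the argument proceeds by writing down an explicit Russell-type equation for a minimal integral model (Example \ref{ex1} and Section \ref{S4} supply such models, sitting inside a vector group scheme $\bbV(\calE)$ of rank $g+1$ as the zero locus of a $p$-polynomial with coefficients in $R$), and noting that the associated boundary subscheme $\frakC$ is regular; the global case is then obtained by gluing the local pictures, using that the ambient vector group schemes are forms of each other and need not glue to a vector group scheme --- only to something Zariski-locally isomorphic to one. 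I would set $\bfV$ to be this ambient group scheme.

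The main obstacle is the global gluing step: over a local base everything is explicit via Russell equations, but to get a single group scheme $\bfV$ over a global curve $C$ one must show the local vector-group-scheme presentations of $\bfU^\circ$ are compatible on overlaps, i.e.\ that the closed immersions $\bfU^\circ|_{U_i}\hookrightarrow \bbV(\calE_i)$ can be chosen to agree where defined, producing a (possibly non-split) group scheme $\bfV$ which is only \emph{locally} a vector group scheme. This is where the word ``not necessarily a vector group scheme'' in the statement comes from, and it is essentially the content of \cite[Theorem 1.2 ff.]{Weisfeiler}; I would either cite that directly or, if a self-contained argument is wanted, deduce it from the fact that $\bfU^\circ$ is killed by a power of $p$ and hence (fiberwise, using Theorem \ref{decomp} and the embedding into $\ga^{g+1}$) embeds fiberwise into a rank-$(g+1)$ vector group, then promote this to a relative statement by a descent argument along the \'etale or fppf topology of $C$. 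The Lie algebra formula \eqref{deflie} and the identification $\Lie(\bbV(\calE)) = \calE^\vee$ recorded just before the statement are the technical inputs that pin down the locally free sheaf $\calE$ (as $\Lie(\bfV)^\vee$) and verify smoothness of $\bfV$.
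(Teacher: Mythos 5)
Your proposal gets the local picture right in outline --- over a discrete valuation ring the identity component of the N\'eron model is cut out by a Russell equation inside a vector group scheme (note, though, that the ambient rank is $2$, not $g+1$: the group $\sfU_K$ is one-dimensional and its Russell equation lives in $\bbG_{a,R}^2$; the genus enters only through the exponents $m,n$ of the $p$-polynomial). But the heart of the theorem is the global gluing, which you explicitly flag as ``the main obstacle'' and then do not carry out. Citing Weisfeiler or invoking ``a descent argument along the \'etale or fppf topology'' does not produce the group scheme $\bfV$; one must exhibit the transition data. The paper does this concretely: on an overlap $V_i\cap V_j$ the change of coordinates between two Russell presentations is forced to be given by $p$-polynomials, and writing these out yields $v_i=c_{ij}^{-p^n}v_j$ and $u_i=c_{ij}^{-1}u_j+(\text{a $p$-polynomial in }v_j)$, where $(c_{ij})$ are the transition functions of the invertible sheaf $\calL=\Lie(\bfU)$ (read off from the local equations, since $\Lie(\bfU_i)$ is generated by $\partial/\partial u_i$). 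One then \emph{defines} $\bfV$ as the group scheme glued from the local copies of $\bbG_{a}^2$ by exactly these transition functions; it sits in an extension $0\to\bbV(\calL^{\otimes -p^n})\to\bfV\to\bbV(\calL^{\otimes -1})\to 0$, and the local closed immersions glue to the exact sequence $0\to\bfU^\circ\to\bfV\to\bbV(\calL^{\otimes -p^n})\to 0$. Without this computation there is no candidate $\bfV$, so the proof is incomplete precisely where the content lies.

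The opening detour through the N\'eron model $\bfJ$ of the Jacobian is also counterproductive. Even granting the closed immersion $\bfU\hookrightarrow\bfJ$ (which is \cite[9.5, Theorem 4]{Bosch}, already cited in the introduction, so your fiberwise argument is not needed), you would then have to embed the $g$-dimensional group scheme $\bfJ^\circ$ into a group scheme locally isomorphic to a vector group scheme --- a strictly harder statement that neither you nor the paper establishes. The paper works directly with the one-dimensional $\bfU^\circ$ and its two-variable Russell equation, and the reference to Waterhouse--Weisfeiler is used only for the local fact that every smooth connected integral model over a discrete valuation ring is given by such an equation.
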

 
 \begin{proof} Localizing at a closed point $t\in C$, 
 we get a unipotent group of genus $g$  over the field of fractions $K_t$ of a discrete 
 valuation ring $R = \calO_{C,t}$. We use Russell's equation \eqref{russell1} for $\sfU_{K_t}$. 
 If $\pi$ is a local parameter in $R$, we replace 
$(u,v)$ with $(\pi^{p^s}u,\pi^{p^{sn}v})$ for sufficiently large integer $s$ and 
cancelling by $\pi^{p^{sn}}$, we may assume  
that 
all $a_i\in R$. It follows from \cite[Theorem 2.8]{Waterhouse} that all smooth integral connected  models of $U_{K_t}$ 
over $R$ are given by such an equation. In particular, the identity component $\bfU_R^\circ$ of the 
N\'eron model $\bfU_R$ is given by such an equation.  
By the N\'eronian property, it is a minimal integral 
model over $R$. The equation allows one to embed $\bfU_R^\circ$ into the vector group scheme $\bbG_{a,R}^2 = \bbV(\calO_R^{\oplus 2})$. 

Now assume that $C$ is global. Let $\frakV = (V_i)_{i\in I}$ be a finite open affine cover of $C$ such that the 
restriction $\bfU_i^\circ$ of 
$\bfU_C^\circ$ to each $V_i\in \frakV$ is given by a Russell equation with coefficients in $\calO_C(V_i)$ 
$$\Phi_i(u_i,v_i) = u_i^{p^n}+v_i+a_1^{(i)}v_i^p+\cdots+a_m^{(i)}v_i^{p^m} = 0.$$ 
Since any derivation of $\calO(V_i)[u_i,v_i]/(\Phi_i)$ is a derivation of 
$\calO(V_i)[u_i,v_i]$ vanishing on $v_i$, the $\calO(V_i)$-module $\Lie(\bfU_i) = \Lie(\bfU_i^\circ)$ is generated  by 
$\frac{\partial}{\partial u_i}$. 

For brevity of notation let  $\calL$ be the invertible sheaf on $C$ equal to $\Lie(\bfU_C)$. 
Let $(c_{ij})$ be the  transition functions of $\calL$, so that $u_i = c_{ij}^{-1}u_j$. 
 
 The transition functions for $\bfU_C$ from $(v_i,u_i)$ to $(v_j,u_j)$ must be 
 $p$-polynomials with coefficients in  $\calO_C(V_i\cap V_j)$.
Suppose $n \le m$. Then 
\begin{eqnarray}
v_i &=& c_{ij}^{-p^n}v_j,\\
u_i &= &c_{ij}^{-1}u_j+c_{ij}^{-p^n}\alpha_{ij}^{(1)}v_j+\cdots +c_{ij}^{-p^m}\alpha_{ij}^{(m)}v_j^{p^{m-n}}, 
\end{eqnarray}
and 
\beq\label{transition1}
a_k^{(i)} = c_{ij}^{p^{n+k}-p^n}a_k^{(j)}-(\alpha_{ij}^{(k)})^{-p^n},
\eeq
where $\alpha_{ij}^{(k)}= 0$ for $k< n$. If $n> m$, then  $\alpha_{ij}^{(k)}$ are all zeros.

We can view $(1,a_1^{(i)},\ldots,a_m^{(i)})$ as a section of a vector 
bundle $\calA$ of rank $m+1$ given as an extension
\beq\label{extA}
0\to \calO_C \to \calA\to \calL^{\otimes p^n-p^{n+1}}\oplus\cdots\oplus \calL^{\otimes p^n-p^{m+n}}\to 0
\eeq 
with transition functions inverse to the transition functions \eqref{transition1}. 

We will use that  the \'etale cohomology of a vector group scheme $\bbV(\calE)$ are isomorphic 
to the abelian group of the Zariski cohomology of $\calE^\vee$ \cite[Chapter III, Proposition 3.7]{Milne}.

Let $\bfV$ be the group scheme locally isomorphic to  $\bbG_a^2$ with 
transition functions defined in above.
The  group scheme $\bfV$  fits into an extension of commutative group schemes
\beq\label{ext1}
0\to \bbV(\calL^{\otimes -p^n}) \to \bfV \to \bbV(\calL^{\otimes -1})  \to 0.
\eeq
given by the projection $(u,v) \to u$. It induces an exact cohomology sequence
$$0\to H^0(C,\calL^{\otimes p^n}) \to \bfV(C) \to H^0(C,\calL)$$
$$\to H_{\et}^1(C,\calL^{\otimes p^n})\to H_{\et}^1(C,\bfV) \to H^1(C,\calL)\to 0.$$
In the case $n > m$, exact sequence \eqref{ext1} splits  and we get 
$$H_{\et}^i(C,\bfV) \cong H^i(C,\calL)\oplus H^i(C,\calL^{\otimes p^n}).$$

The local embedding of $\bfU_i^\circ$ into $\bbG_{a,V_i}^2$ glue together to obtain an exact sequence
\beq\label{exseqU} 
0\to \bfU^\circ \to \bfV \overset{\mu}{\to} \bbV(\calL^{\otimes -p^n}) \to 0.
\eeq
\end{proof}

\begin{remark}\label{splitting} Note that $\bfV$ is a vector group only if $m= n$. 
In this case the transition matrices 
 are 
$$\begin{pmatrix}c_{ij}^{p^n}&-\alpha_{ij}c_{ij}^p\\
0&c_{ij}\end{pmatrix}.$$
The vector group scheme $\bfV$ is equal to $\bbV(\calE^\vee)$, where $\calE$ fits in an extension
$$0\to \calL^{\otimes p^n} \to \calE \to \calL \to 0.$$
It always splits, i.e. we may assume that $\alpha_{ij} = 0$, if $H^1(C,\calL^{\otimes 1-p^n}) = 0$.
For example, this happens if $(1-p^n)\deg(\calL) > 2g(C)-2$. If $\calE$  splits, we take 
$\alpha_{ij}^{(k)} = 0$  and hence  $\calA$ also splits.
\end{remark}

 \section{Global integral models of a quasi-rational unipotent group} \label{S5}
Let $\sfU_K$ be a quasi-rational unipotent group over the field of rational functions $K$ of 
 a complete smooth algebraic curve $C$ over an algebraically closed field 
$\Bbbk$. Its regular compactification is isomorphic to $\bbP_K^1$. 
A regular relatively minimal model of $\bbP_K^1$ over $C$ is a minimal ruled surface 
$f:X\to C$, where $X = \bbP(\calE)$ is a projective line bundle. 
This shows that we can try to construct an integral affine connected model $\bfU$ of $\sfU_K$ over $C$ 
as an open subset of $X$ whose complement is equal to the closure $\frakC$ of the point $P_\infty$ on 
the generic fiber.
The intersection of $\frakC$ with each closed fiber consists of one point taken with multiplicity 2, 
i.e. $\frakC$ is an inseparable bisection of $X$. In fact, starting from an inseparable point $P_\infty$ of degree 
$2$, we take any minimal ruled  surface $f:X\to C$, throw away the closure of $P_\infty$ and the complement will be 
an integral affine connected model of $\sfU_K$. We call such a model an \emph{affine minimal ruled surface}.

Let us remind some general facts about minimal ruled surfaces $\bbP(\calE)$ \cite[Chapter V, \S 2]{Hartshorne}. 
 For any invertible sheaf $\calN$, we have 
$\bbP(\calE)\cong \bbP(\calE\otimes \calN)$. One can normalize $\calE$ by tensoring it with an 
appropriate $\calN$  such that $H^0(C,\calE)\ne \{0\}$ but, for any invertible sheaf $\calN$ 
 of negative degree on $C$, one has $H^0(C,\calE\otimes \calN) = \{0\}$. Assume that $\calE$ is normalized.
  The inclusion 
  $\calO_C\to \calE$ defined by a section is saturated subsheaf of $\calE$, i.e. the quotient sheaf 
  has no torsion, and defines an extension of locally free sheaves
 \beq\label{hartshorne1}
 0\to \calO_C \to \calE \to \calL_0\to 0. 
 \eeq
 Any surjection  $\alpha:\calE\to \calN$ onto an invertible sheaf $\calN$  defines a section 
 $s_\alpha:C \cong \Proj(\Sym^\bullet\calN)\to 
 X = \Proj(\Sym^\bullet\calE)$ and conversely any section arises in this way. The sheaf 
 $s_\alpha^*(\Ker(\alpha)\otimes \calN^{\otimes-1})$ is isomorphic to the conormal sheaf 
 $\calO_E(-E)$, where $E = s_\alpha(C) \subset X$.

 Let 
 $s_0:C\to \bbP(\calE)$ be the section corresponding to the surjection $\calE\to \calL_0$. We will often 
 identify it 
 with its image $E_0$ in $X$. It is called 
 the \emph{exceptional section}. We have 
\beq\label{selfintersection} 
\calO_{E_0}(E_0) \cong \calL_0.
\eeq
 In particular,
 $$e:= -E_0^2 = \deg(\calL_0) = \deg(\calE) .$$

 \begin{lemma}\label{excsection1} For any section of $s:C\to E\subset \bbP(\calE)$, 
 $$E^2 \ge E_0^2.$$
 If $E\ne E_0$, the equality happens only in the case where the projective bundle is trivial, i.e. 
 $\calE\cong \calO_C\oplus\calO_C$. 
 \end{lemma}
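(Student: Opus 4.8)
The plan is to translate the statement into one about invertible subsheaves of the normalized bundle $\calE$, and then to invoke the normalization exactly once. First I would use the dictionary between sections and quotients: a section $s\colon C\to E\subset X=\bbP(\calE)$ is the same as a surjection $\alpha\colon\calE\twoheadrightarrow\calN$ onto an invertible sheaf, and $\calM:=\Ker(\alpha)$ is an invertible subsheaf, automatically saturated, fitting in $0\to\calM\to\calE\to\calN\to0$. Taking top exterior powers and comparing with $\eqref{hartshorne1}$ gives $\deg\calM+\deg\calN=\deg\calE=\deg\calL_{0}$. Using the conormal formula $\calO_{E}(-E)\cong s_{\alpha}^{*}(\calM\otimes\calN^{-1})$ recalled above together with $\eqref{selfintersection}$, and taking degrees along $s_{\alpha}\colon C\xrightarrow{\ \sim\ }E$, I obtain $E^{2}=\deg\calN-\deg\calM$ and $E_{0}^{2}=\deg\calL_{0}$, hence
\[
E^{2}-E_{0}^{2}=(\deg\calN-\deg\calM)-(\deg\calM+\deg\calN)=-2\deg\calM .
\]
So the lemma is entirely a statement about the sign of $\deg\calM$.

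The inequality $E^{2}\ge E_{0}^{2}$ is then equivalent to $\deg\calM\le0$, and this is the only place the normalization of $\calE$ enters: if $\deg\calM=d>0$, then tensoring the inclusion $\calM\hookrightarrow\calE$ by $\calM^{-1}$ exhibits a nonzero global section of $\calE\otimes\calM^{-1}$, i.e. $H^{0}(C,\calE\otimes\calM^{-1})\neq0$ while $\deg\calM^{-1}=-d<0$, contradicting the defining property of a normalized bundle. Hence $\deg\calM\le0$, which proves the first assertion.

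For the equality case, $E^{2}=E_{0}^{2}$ forces $\deg\calM=0$, and I would then examine the composite $\calM\hookrightarrow\calE\twoheadrightarrow\calL_{0}$. If it vanishes, $\calM$ lies in $\Ker(\calE\to\calL_{0})=\calO_{C}$, and since these two invertible subsheaves of $\calE$ have the same degree $0$ this forces $\calM=\calO_{C}$, so $\alpha$ agrees up to isomorphism with the quotient $\calE\to\calL_{0}$ and $E=E_{0}$. Thus, assuming $E\neq E_{0}$, the composite is a nonzero, hence injective, map $\calM\hookrightarrow\calL_{0}$ of invertible sheaves; in particular $\deg\calL_{0}\ge0$ and $\calM$ is a saturated invertible subsheaf of $\calE$ of degree $0$ distinct from $\calO_{C}$. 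When $\deg\calL_{0}=0$ (equivalently $E_{0}^{2}=0$ by $\eqref{selfintersection}$), the map $\calM\xrightarrow{\ \sim\ }\calL_{0}$ is an isomorphism, which splits $\eqref{hartshorne1}$, so $\calE\cong\calO_{C}\oplus\calL_{0}$ with $\deg\calL_{0}=0$; over the rational base $C\cong\bbP^{1}_{\Bbbk}$ this yields $\calL_{0}\cong\calO_{C}$, i.e. $\calE\cong\calO_{C}\oplus\calO_{C}$. The step I expect to be the crux is exactly this last one — squeezing the existence of a second degree-$0$ subbundle of $\calE$ into the conclusion that the bundle actually splits and is trivial; if a uniform statement over an arbitrary base is wanted, I would phrase the equality conclusion as ``$\calE$ is a direct sum of two invertible sheaves of equal degree'', which for $C=\bbP^{1}_{\Bbbk}$ is the same as $\calE\cong\calO_{C}^{\oplus2}$.
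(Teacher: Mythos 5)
Your proof of the inequality $E^2\ge E_0^2$ is correct and complete. The dictionary between sections and invertible quotients, the bookkeeping $E^2-E_0^2=-2\deg\calM$ for $\calM=\Ker(\alpha)$, and the single appeal to the normalization of $\calE$ to force $\deg\calM\le 0$ is exactly the mechanism the lemma rests on; in fact the paper's own proof invokes ``$E^2\ge E_0^2$'' without deriving it, so here you have supplied a step the paper leaves implicit.

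The equality case, however, has a genuine gap, and you have located it without closing it. After reducing to: $\calM$ a saturated invertible subsheaf of degree $0$ with $\calM\ne\calO_C$ and $\calM\to\calL_0$ injective, hence $\deg\calL_0\ge 0$, you treat only $\deg\calL_0=0$, and even there you finish by assuming $C\cong\bbP^1$, which is not a hypothesis of the lemma ($C$ is an arbitrary smooth complete curve in this section). Both omissions are substantive, because over a base of positive genus the untreated cases actually occur and the stated conclusion fails. For $\deg\calL_0=0$: take $C$ elliptic and $\calE=\calO_C\oplus\lambda$ with $\lambda$ nontrivial of degree $0$; this $\calE$ is normalized, the two factors give distinct sections with $E^2=E_0^2=0$, and $\calE\not\cong\calO_C^{\oplus 2}$. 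For $\deg\calL_0>0$: take $C$ elliptic and $\calE$ the nonsplit extension of $\calO_C(p)$ by $\calO_C$; it is normalized and stable with $E_0^2=1$, and for every degree-$0$ line bundle $\lambda$ one has $h^0(\calE\otimes\lambda^{-1})\ge\chi(\calE\otimes\lambda^{-1})=1$, producing infinitely many sections with $E^2=1=E_0^2$ although $\calE$ is indecomposable --- so even your proposed weakened conclusion (``a direct sum of two invertible sheaves of equal degree'') fails there. In short, the equality assertion is only valid for $C\cong\bbP^1$ (where normalized means $\calE\cong\calO_C\oplus\calO_C(-e)$ with $e\ge 0$ and your argument does close the loop), and any correct treatment must either impose that hypothesis or replace the conclusion. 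You are in good company: the paper's own proof follows the same sub-line-bundle dictionary and stumbles at exactly this point, asserting that $\Ker(\alpha)\to\calL_0$ is an isomorphism before knowing $\deg\calN=0$, which is circular. So your instinct that this last step is ``the crux'' is right, but as written the proposal does not prove the lemma as stated.
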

 
 \begin{proof}  
 Let $\alpha:\calE\to \calN$ be a surjection corresponding to $E$. Then either 
the restriction of $\alpha$ to the subsheaf $\calO_C$ of $\calE$ is the zero map, or it is an injection. 
 In the former case, $\alpha$ factors through
 an isomorphism $\calL_0\to \calN$ and hence defines the same section.  In the latter case, 
 $\deg\calN\ge 0$ and the kernel of $\alpha$ is mapped isomorphically onto $\calL_0$. Hence 
 the exact sequence \eqref{hartshorne1} splits and $\calE\cong \calO_C\oplus \calL_0$. We also have 
 $\deg\calE = \deg\calL_0+\deg\calN$ gives $\deg\calN = 0$ and, since it has a section, we get   
 $\calN\cong \calO_C$. The conormal bundle of the section $E$ defined by $\calN$ is 
 isomorphic to $\calL_0$, hence $E^2 = -E_0^2$. Since $E^2\ge E_0^2,$ we get $E_0^2=E^2 = 0$ and, and hence 
 $\calL_0 \cong \calO_C$. This proves that $\calE \cong \calO_C^{\oplus 2}$. 
 \end{proof}

\begin{proposition} Let $\sfU$ be a quasi-rational unipotent group and $P_\infty\in P_K^1$
 its boundary point. There exists a unique (up to isomorphism)  integral affine minimal 
 ruled model $f:\bbP(\calE) \to C$ 
 such that the zero section is the exceptional section, the closure of 
 $P_\infty$ is disjoint from the zero section, and the restriction of $f$ to it  
 is isomorphic to the Frobenius cover $C^{(2)}\to C$. 
\end{proposition}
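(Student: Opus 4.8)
The plan is to reduce to Rosenlicht's normal form over $K$, build the required surface as the projectivisation of the Frobenius direct image from the boundary point, and then deduce uniqueness from the fact that this surface is forced by the configuration. First I would fix the picture over $K$: by Rosenlicht's Proposition, $p=2$ and, after a change of coordinates, $\sfU_K=V(u^2+v+av^2)\subset\bbG_{a,K}^2$ with $a\in K\setminus K^2$; moreover $\bar\sfU_K\cong\bbP^1_K$ and $P_\infty$ is its unique boundary point, of degree $2$ with residue field $K(a^{1/2})$. Since $K=\Bbbk(C)$ satisfies $[K:K^2]=2$ and $a\notin K^2$, we have $K(a^{1/2})=K^{1/2}$, so $\kappa(P_\infty)=K^{1/2}$ and $\Spec\kappa(P_\infty)\to\Spec K$ is the generic fibre of the Frobenius cover $\nu\colon\tilde C\to C$, where $\tilde C$ is the normalization of $C$ in $K^{1/2}$: a smooth curve abstractly isomorphic to $C$, with $\nu$ finite, locally free and purely inseparable of degree $2$.

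Next I would construct the model. Put $\calE:=\nu_*\calO_{\tilde C}$; by miracle flatness this is locally free of rank $2$ on $C$, the unit realizes $\calO_C$ as a subbundle, and $\calL_0:=\calE/\calO_C$ is a line bundle with $\deg\calL_0=\deg\calE=g(C)-1$ (compare Euler characteristics). Set $f\colon X:=\bbP(\calE)\to C$ and let $E_0$ be the section attached to the quotient $\calE\twoheadrightarrow\calL_0$. I claim $E_0$ is the exceptional section: by adjunction a nonzero map $\calM\hookrightarrow\calE=\nu_*\calO_{\tilde C}$ from a line bundle gives a nonzero section of $\nu^*\calM^\vee$ on $\tilde C$, forcing $\deg\calM\le 0$; hence $\calO_C$ is the maximal sub-line-bundle of $\calE$, equivalently $\calL_0$ is the quotient line bundle of least degree, and this is exactly the condition that $E_0$ has least self-intersection among all sections of $X$. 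The counit $\nu^*\calE\twoheadrightarrow\calO_{\tilde C}$ defines a $C$-morphism $\iota\colon\tilde C\to X$; over each $c\in C$ the fibre of $\nu$ is $\Spec\Bbbk[\epsilon]/(\epsilon^2)$ and the counit is the multiplication map of $\Bbbk[\epsilon]/(\epsilon^2)$, from which one reads off that $\iota$ is a closed immersion, that its image $\frakC$ is an inseparable bisection with $f|_\frakC=\nu$, and that $\frakC(c)\in\bbP(\calE_c)$ is the line spanned by the image of $\epsilon$, disjoint from $E_0(c)$, the line spanned by the image of $1\in\calO_C$; hence $\frakC\cap E_0=\emptyset$. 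Finally, since $e=0\ne P_\infty$ in $\sfU_K$, I would choose the identification $X_K\cong\bbP^1_K$ (an element of $\PGL_2(K)$) carrying the generic point of $E_0$ to $e$ and that of $\frakC$ to $P_\infty$; then $\bfU:=X\setminus\frakC$ is an integral affine minimal ruled model of $\sfU_K$ in which the zero section equals the exceptional section $E_0$, the boundary $\frakC$ is disjoint from it, and $f|_\frakC$ is the Frobenius cover. (For $C\cong\bbP^1$ this recovers $X=\bbP(\calO\oplus\calO(-1))=\bfF_1$ with $\frakC$ the strange conic.)

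For uniqueness, suppose $f'\colon X'\to C$ is another such model, with zero section $E_0'=\overline{\{e\}}$ equal to the exceptional section, boundary $\frakC'=\overline{\{P_\infty\}}$ disjoint from $E_0'$, and $f'|_{\frakC'}$ the Frobenius cover, so $\frakC'\cong\tilde C$ over $C$. Writing $X'=\bbP(\calE')$ and normalizing $\calE'$ so that $\calO_{X'}(1)=\calO_{X'}(E_0')$, disjointness gives $\iota'^*\calO_{X'}(1)=\calO_{\tilde C}$ for the closed immersion $\iota'\colon\tilde C\cong\frakC'\hookrightarrow X'$; the surjection $\nu^*\calE'\twoheadrightarrow\calO_{\tilde C}$ attached to $\iota'$ then has adjoint $\calE'\to\nu_*\calO_{\tilde C}=\calE$, which, being a morphism of rank-$2$ bundles of the same degree coming from a closed immersion, is an isomorphism; thus $X'\cong\bbP(\calE)=X$ over $C$. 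Under this isomorphism $E_0'$ maps to a section disjoint from the image of $\frakC'$, which, being (like $E_0$) the exceptional section, must equal $E_0$; and tracking generic fibres shows the isomorphism respects the identifications with $\bbP^1_K$ and carries $\frakC'$ onto $\frakC$. Hence the model is unique up to isomorphism.

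I expect the main obstacle to be the two assertions inside the construction --- that $E_0$ is the exceptional section (via the adjunction bound $\deg\calM\le 0$ on sub-line-bundles of $\nu_*\calO_{\tilde C}$) and that the natural bisection $\frakC=\iota(\tilde C)$ avoids $E_0$ (the fibrewise computation with $\Bbbk[\epsilon]/(\epsilon^2)$) --- together with the rigidity step that recovers $\calE'\cong\nu_*\calO_{\tilde C}$ in the uniqueness argument; once these are in place, the remaining points are bookkeeping with Rosenlicht's normal form and with $\PGL_2(K)$.
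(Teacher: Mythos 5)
Your argument is correct in outline but takes a genuinely different route from the paper. The paper starts from an arbitrary relatively minimal ruled model, uses elementary transformations to make the bisection smooth, extracts the relation $\calL^{\otimes 2}\cong\omega_{C/\Bbbk}$ from the gluing data of the local Russell equations (the nowhere vanishing differential $da$), normalizes the zero section by intersection theory and translations, and only at the very end identifies $\calL$ with $\calB_{C/\Bbbk}^1$ via relative duality applied to $f_*\calO_{\frakC}$. You instead build the model directly as $\bbP(\nu_*\calO_{\tilde C})=\bbP(\bfF_*\calO_C)$ with its tautological Frobenius bisection. This is cleaner and in one respect stronger: your uniqueness step recovers the rank-two bundle $\calE'\cong\nu_*\calO_{\tilde C}$ itself, whereas the paper determines only the quotient line bundle $\calL\cong\calB_{C/\Bbbk}^1$ and then asserts that this determines $\bbP(\calE)$, leaving the extension class of $0\to\calO_C\to\calE\to\calL\to 0$ untouched. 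Your local computation ($R[z]\to R[t^{1/2}]$, $z\mapsto t^{1/2}$, surjective with image $V(z^2-t)$ disjoint from $E_0=(0{:}1)$) does establish the closed immersion and the disjointness, and the $\PGL_2(K)$ step is valid because $[K:K^2]=2$ gives $K=K^2\oplus cK^2$, so the affine group acts transitively on inseparable degree-two points of $\bbA_K^1$.

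One step is not justified as written: in the uniqueness argument you call $\calE'\to\nu_*\calO_{\tilde C}$ ``a morphism of rank-2 bundles of the same degree,'' but the equality $\deg\calE'=\deg\nu_*\calO_{\tilde C}=g(C)-1$ is exactly what needs proof for an arbitrary competing model. Two repairs are available. Either compute $\deg\calE'$ first: writing $\frakC'\sim 2E_0'+f'^*D$ and using $\frakC'\cdot E_0'=0$ together with adjunction and the smoothness of $\frakC'\cong\tilde C$ forces $\deg\calL_0'=g(C)-1$. Or, more in the spirit of your construction, observe that your map is the pushforward of the restriction $\calO_{X'}(1)\to\calO_{X'}(1)\otimes\calO_{\frakC'}$; since $\calO_{X'}(1)(-\frakC')$ has degree $-1$ on fibers, $R^1f'_*\bigl(\calO_{X'}(1)(-\frakC')\bigr)=0$, so $\calE'=f'_*\calO_{X'}(1)\to\nu_*\calO_{\tilde C}$ is surjective, hence an isomorphism of rank-two bundles. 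A smaller point: the exceptional section of $\bbP(\nu_*\calO_{\tilde C})$ need not be unique (another degree-zero sub-line-bundle of $\nu_*\calO_{\tilde C}$ corresponds to a $2$-torsion class in $\Pic(C)$), so ``being the exceptional section, must equal $E_0$'' is too quick; but any two sections disjoint from $\frakC$ are sections of the group scheme $X\setminus\frakC$ and therefore differ by a translation automorphism fixing $\frakC$ pointwise, which is precisely how the paper completes this identification.
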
 

\begin{proof} 
 Recall that an elementary transformation $\textrm{elm}_x$ with center at a 
 closed point $x\in \bbP(\calE)$  consists of 
blowing up  the point $x$  followed by blowing down the proper transform of the fiber $X_t$ 
where $t = f(x)$.
 If $x$ does not lie on the special section $\sfO$, the image of $\sfO$ has self-intersection equal to 
$\sfO^2+1$. The new projective line bundle is isomorphic to $\bbP(\Ker(\phi_x))$, where 
$\phi_x$ is the surjective map from $\calE$ to the sky-scrapper sheaf $(\Bbbk)_t$ defined by the point $x$.
The normalized  locally free sheaf $\calE'$ is an extension
$$0\to \calO_C\to \calE' \to \calL_0\to 0,$$
with $\deg\calE' = \deg\calE+1$. Suppose $\frakC$ is not smooth. After a sequence of elementary 
transformation we arrive at a new minimal model $f':X'\to C$ of $\bar{\sfU}$ with smooth closure  $\frakC'$ of 
$P_\infty$.

Note that, if we localize $X$ at a closed point $t\in C$, i.e. replace $C$ with a local base 
 $R = \Spec(\calO_{C,t})$, we obtain an integral model of $\sfU_{Q(R)}$ isomorphic to the affine surface
 $u^2+v+av^2 = 0$ and the localized $X$ is its projective closure $y^2+zx+ax^2 = 0$ in $\bbP_R^1$. 
 The localized bisection $\frakC_c$ is the closed subscheme $V(z)$. It is smooth if and only if $a_s$ generates the maximal 
 ideal of $R$. So, in the global equation of $\bfU$ discussed  in the previous section, the differentials 
 $da_i$ of local coefficients $a_i$ glue together to a differential 
 $$da\in H^0(C,\calL^{\otimes -2}\otimes \Omega_{C/\Bbbk}^1)$$ 
 of a section of $\calL^{\otimes -2}$ which 
 does not vanish anywhere. In particular, we obtain  
 \beq\label{thetachar}
 \calL^{\otimes 2} = \Lie(\bfU)^{\otimes 2}  \cong \omega_{C/\Bbbk}.
 \eeq

We  know that the zero section $\sfO$ of $\bfU$ is disjoint from the curve at infinity $\frakC$. 
Suppose we have another section $E$ with this property. 
Since their restrictions to the generic fiber are linearly equivalent,
 we have 
\beq\label{twosections} 
\sfO \sim E_0+f^*(D)
\eeq
 for some divisor class $D$ on $C$ of some degree $m$. In particular, 
 $\sfO^2 = E_0^2+2m$ shows that $m \ge 0$. 
Intersecting both sides of \eqref{twosections} with $\frakC$, we obtain 
$0 = \frakC\cdot E_0+2m$, hence $m = 0$ and $\frakC\cdot E_0 = 0$. 
Thus   $E_0$ is also  
disjoint from $\frakC$. But, then $E_0$ and $\sfO$ are  two section of the group scheme $\bfU$, hence 
must differ by a translation automorphism. Changing the zero section, we  may assume that 
$$E_0 = \sfO, \quad \calL = \calL_0.$$ 
Applying Lemma \ref{excsection1}, we obtain 
\beq\label  {nosections}
\bfU(C) = \{0\},
\eeq
unless the fibration $X\cong  C\times \bbP^1\to C$ is trivial. 
However, in this case we have a base-point free pencil $|\sfO|$
  of sections linearly equivalent to $\sfO$, so they all intersect 
 the curve $\frakC$, a contradiction.

We can also find the divisor class of the curve $\frakC$. 
Since the restrictions of $\frakC$ and $2\sfO$ to the generic fiber are linearly equivalent,
we have 
$$
\frakC\sim 2\sfO+f^*(D)
$$
for some divisor $D$ on $C$. Since $\frakC$ and $\sfO$ are disjoint,
restricting to $\sfO$, we obtain
$$\calO_C(D) \cong \calL^{\otimes -2} \cong \omega_C^{\otimes -1}.$$
Thus
\beq\label{compare2}
\frakC \sim 2\sfO-f^*(K_C).
\eeq
Comparing it with the formula for the canonical class 
\cite[Chapter V, \S 2, Lemma 2.10]{Hartshorne}, 
\beq\label{canclassruled} 
\omega_X \cong \calO_X(-2\sfO)\otimes f^*(\omega_C\otimes \calL)
\eeq
we find
\beq\label{compare3}
\calO_X(\frakC) \cong \omega_X^{-1}\otimes f^*(\calL).
\eeq
 
It remains to prove the uniqueness statement.  Recall that the map $\frakC$ is isomorphic to the 
Frobenius map. We have an exact sequence
$$0\to \calO_C\to f_*\calO_{\frakC}\to \calN \to 0$$
for some invertible sheaf $\calN$. 
By applying the direct image functor to exact sequence
$$0\to \calO_X(-\frakC)\to \calO_X\to \calO_{\frakC}\to 0,$$
using \eqref{compare3} and the relative duality 
$f_*\omega_{X/C} \cong (R^1f_*\calO_X)^\vee = 0$ and 
$R^1f_*\omega_{X/C} \cong (f_*\calO_X)^\vee = \calO_X$, we obtain 
that 
$$\calN \cong \omega_{C/\Bbbk}\otimes \calL^{\otimes -1} \cong \calL^{\otimes -1}.$$
This shows that  the isomorphism class of the  invertible sheaf $\calL$ is uniquely 
determined by $C$. In fact
it is isomorphic to the sheaf $\calB_{C/\Bbbk}^1$ equal to the image of 
$d:\bfF_*\calO_C\to \bfF_*\omega_{C/\Bbbk}$. Thus $\calL$ and hence $\bbP(\calE)$ is uniquely 
determined by the base curve $C$.
\end{proof}

\begin{example}\label{ex:conic} Assume $C = \bbP^1$. A minimal ruled surface over $\bbP^1$ is 
isomorphic to one of the 
surfaces $\bfF_n = \bbP(\calO_{\bbP^1}\oplus \calO_{\bbP^1}(-n)), n\ge 0$. 
We have $\Lie(\bfU) = \calO_{\bbP^1}(-n)$ 
and $\deg\Lie(\bfU) = -n$. It follows from \eqref{compare2} that $\frakC\in |2n\frakf+2\frake|$, where 
$\frakf$ is the divisor class of a fiber and $\bfe$ is the class of the special 
section $\sfO$ with self-intersection 
$-n$. The canonical class of $\bfF_n$ is equal to $-(n+2)\frakf-2\frake$. The adjunction formula shows that 
$\deg \omega_{\frakC} = 2n-4$ that agrees with the formula for the canonical class of $\frakC$ from above.  
Since $\frakC$ is smooth, it is of genus $g(C) = 0$. The cover $\frakC\to C$ is just the 
Frobenius morphism. The canonical class formula shows that $n = 1$ and $X = \bfF_1$.

The surface $\bfF_1$ is  
obtained by blowing up one point $p_0$ in the projective plane. 
The projection $f:\bbP(\calE)\to C$ is given by the pencil of lines through the point $p_0$. Choose 
projective coordinates $(x_0:x_1:x_2)$
such that $p_0 = (1:0:0)$ and consider a conic $Q$ with equation $x_1x_2+x_0^2 = 0$. 
Then any line from the pencil is tangent to the conic, hence the pre-image of $Q$ in 
$\bfF_1$ defines an inseparable bisection and its complement is the N\'eron model of a
quasi-rational curve $\sfU_K$. Let $t$ be the parameter of the pencil of lines so 
that $x_2+tx_1 = 0$. Then the generic fiber of the pencil is isomorphic to 
$\Proj(\Bbbk(t)[t_0,t_1])$ and the  complement of its intersection with  
 the conic is equal to the  open subset $D_+(tx_1^2+x_0^2)$. 
It is equal to the affine spectrum of 
the homogeneous localization of the graded ring $\Bbbk(t)[t_0,t_1]$ with respect to 
$f = tx_1^2+x_0^2$. The ring is generated by 
$u = x_0x_1/f,  v = x_1^2/f, w= x_0^2/f $ with relations 
$tv+w= 1$, and $vw+u^2 = 0$. This gives the equation $u^2+v+tv^2 = 0$ 
of $\sfU_K$ that agrees with a general equation \eqref{ratunipgroup} of a 
quasi-rational unipotent group.

\end{example}

\begin{example} 
If $g(C) = 1$, then $\calL \cong \calO_C$ or $\calL^{\otimes 2}\cong \calO_C$. 
In the former case, taking cohomology in exact sequence 
$$0\to \calO_C\to \bfF_*\calO_C\to \calL^{\otimes -1}\to 0$$
we obtain that $\bfF:H^1(C,\calO_C)\to H^1(C,\calO_C)$ is the zero map, hence $C$ is supersingular.
The ruled surface is a well known elliptic ruled surface defined by a non-split exact sequence
$$0\to \calO_C\to \calE\to \calO_C\to 0$$
with the extension class $1\in H^1(C,\calO_C) \cong \Bbbk$. In the latter case, we have 
$$\calE \cong \calO_C\oplus \calL.$$
and $C$ is an ordinary elliptic curve.
If $g(C) > 1$, $\deg \calL = g-1\ge 0$ and tensoring by $\calL^{\otimes -1}$, we see 
that the exact sequence cannot split (otherwise $\calE$ is not normalized). 
\end{example}

\section{Torsors of unipotent groups of genus $g$: local case}\label{S6}
Recall that  isomorphism classes of separable  \emph{torsors} 
(=principal homogeneous spaces) of any algebraic  group  $G$ over  
a field $F$ form a group isomorphic 
to the pro-finite  Galois cohomology group
$H^1(\Gal(F^{\sep}/F),G(F^{\sep}))$, where $F^{\sep}$ is the separable closure of $F$. If we 
view $G$ as an abelian sheaf that it represents in \'etale topology, then  
$$H^1(\Gal(F^{\sep}/F),G(F^{\sep})) = H_{\et}^1(F,G).$$
In this way one can consider torsors of any group scheme over $K$ where we replace 
the \'etale topology with 
the fpqc-topology (flat topology for short). If $G$ is a smooth group scheme then the flat cohomology 
coincides with \'etale cohomology \cite[Chapter III, Theorem 3.9]{Milne}.

We have already mentioned  in Section 2  that the 
unipotent group $\bbG_{a,F}^r$ does not have non-trivial torsors. 
However an $F$-wound unipotent group may have.

Let $\sfU_K$ be given by a separable $p$-polynomial $\Phi(x_1,\ldots,x_{r+1})$. 
We have an exact sequence of abelian sheaves in \'etale topology
\beq\label{kumerexseq1}
0\to \sfU_K\to \bbG_{a,F}^{r+1} \overset{\Phi}{\to} \bbG_{a,F}\to 0.
\eeq
Passing to cohomology, we get an isomorphism of abelian groups 
$$H^1(F,\sfU_K) \cong \bbG_{a}(F)/\Phi(\bbG_a(F)^{r+1}) = F/\Phi(F^{\oplus r+1}).$$
Explicitly, if $\sfU_K$ is given by a $p$-polynomial $\Phi$, an equation of a torsor is 
$$\Phi(x_1,\ldots,x_{r+1})+f = 0,$$
where $f\in F$. The torsor is trivial 
(i.e. has a rational point in $F$) if and only if 
$f\in \Phi(F^{\oplus r+1})$.

In this section we will study a special case when $\sfU_K$ is a genus $g$ unipotent group  and 
$F = K$ is strictly local, i.e. $K= \Bbbk((t))$  is the quotient ring of $R = \Bbbk[[t]]$ and 
$\Bbbk$ is algebraically closed. We denote by $\nu:K\setminus \{0\}\to \bbZ$ the discrete 
valuation of $K$.

We start with the following.

\begin{proposition}\label{genus0} 
Let $\sfU$ be a quasi-rational unipotent group over a strictly local field $K$. 
Then any its torsor is trivial.
\end{proposition}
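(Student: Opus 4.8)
The plan is to use the explicit Russell-type model of a quasi-rational group together with the group law formula \eqref{grouplaw} to write down every torsor concretely and then exhibit a rational point. By Proposition \ref{queen}'s predecessor (the Rosenlicht classification), $p=2$ and $\sfU$ is isomorphic to the subgroup of $\bbG_{a,K}^2$ cut out by $u^2+v+av^2=0$ with $a$ a non-square in $K$; since $K=\Bbbk((t))$ and $\Bbbk$ is algebraically closed, after multiplying $u$ by a unit we may normalize $a$ so that $\nu(a)=1$, i.e. $a = t\cdot(\text{unit})$, and in fact (absorbing the unit into a change of the coordinate $v$, or into a substitution $u\mapsto \lambda u$) we may take $a=t$. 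So the task reduces to showing that for every $f\in K$ the torsor $u^2+v+tv^2+f=0$ has a $K$-point, equivalently that $K = \Phi(K\oplus K)$ where $\Phi(u,v)=u^2+v+tv^2$.

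First I would reduce to $f\in R$: writing $f = f_- + f_+$ with $f_-$ the ``principal part'' $\sum_{i<0}c_it^i$ and $f_+\in R$, I want to solve away $f_-$ by choosing $u,v$ supported on negative powers of $t$. Concretely, one solves $u^2+tv^2 = -f_-$ modulo $R$: since $\Bbbk$ is algebraically closed (hence perfect), every element $c_{-2k}t^{-2k}$ is a square $u^2$, and every element $c_{-2k-1}t^{-2k-1} = t\cdot(c_{-2k-1}t^{-2k-2})$ is $t$ times a square, so by adjusting $u$ and $v$ term by term in decreasing order of pole, one kills all negative-degree terms of $f$ at the cost of adding the contribution $v$ (which has only negative-degree terms, so it too lands in the part being cancelled, requiring one extra round of bookkeeping). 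After this we are left to solve $\Phi(u,v) = g$ with $g\in R$.

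Next, over $R$: here I would argue by successive approximation / Hensel-type iteration. Reducing mod $t$, the equation becomes $u^2 + v = \bar g$ in $\Bbbk$, which is solved by $v=0$, $u^2=\bar g$ (again $\Bbbk$ perfect). Lifting: suppose $\Phi(u_n,v_n)\equiv g \pmod{t^{n}}$; write $g-\Phi(u_n,v_n) = t^{n} h$. In characteristic $2$ the relevant linearization is through the $v$-variable where $\Phi$ has the linear term $v$: setting $v_{n+1}=v_n + t^{n} h$ and leaving $u$ alone changes $\Phi$ by $t^n h + t^{2n}h^2$, so $\Phi(u_n,v_{n+1}) \equiv g \pmod{t^{2n}}$ — the iteration converges $t$-adically and the limit $(u,v)\in R\oplus R$ is an honest solution. (Equivalently one invokes that $\Phi$, viewed as a map $\bbA^2\to\bbA^1$, is smooth because the coefficient of the linear monomial $v$ is a unit, so $\Phi\colon R\oplus R\to R$ is surjective by the implicit function theorem for complete local rings.)

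The main obstacle is the negative-part reduction: making precise that the ``tail'' elimination terminates and that the auxiliary linear term $v$ (which gets reintroduced when one modifies $v$) does not spoil the cancellation. The clean way to handle this is to treat the whole thing uniformly: the map $\Phi\colon K\oplus K\to K$ is additive and its restriction to $R\oplus R\to R$ is surjective (smoothness argument above); it therefore suffices to show $K = R + \Phi(K\oplus K)$, and for that one checks that $t^{-m}\Bbbk \subset \Phi(K\oplus K) + t^{-m+1}R$ for every $m\ge 1$ by solving a single monomial $ct^{-m}$ as $u^2$ or as $t v^2$ (according to the parity of $m$) modulo higher-order terms, then inducting on $m$. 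Combining the two surjectivity statements gives $\Phi(K\oplus K) = K$, hence $H^1(K,\sfU_K)=K/\Phi(K\oplus K)=0$, and every torsor is trivial. I would also remark that this is consistent with the general picture: a quasi-rational group does not even admit a N\'eron model over $\Spec R$, and its regular compactification $\bbP^1_K$ has a rational point, so triviality of torsors is what one expects.
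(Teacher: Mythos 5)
Your proposal is correct and follows essentially the same route as the paper: normalize the Russell equation to $u^2+v+tv^2=0$, show $R\subset\Phi(R\oplus R)$ by Hensel's lemma (smoothness in the $v$-direction), and eliminate the polar part of $f$ monomial by monomial, using $u^2$ for even-order poles and $tv^2$ for odd-order poles, with the perfectness of $\Bbbk$ supplying the needed square roots. The only detail to watch is the simple pole $ct^{-1}$, where the correction $v$ is not of strictly higher order; there one solves $x^2+x=c$ in the algebraically closed field $\Bbbk$, which your induction scheme accommodates.
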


\begin{proof} Of course, this immediately follows from the fact that the field $K$ is a $C_1$-field 
\cite{SerreGalois}, however we give a proof that  serves as a warm-up for our study of cases $g > 0$. We choose a minimal model of $\sfU_K$ defined by  
equation 
$$u^2+v+av^2 = 0,\  a\in R\setminus R^2.$$
Replacing $u$ with $t^su+c$ and $v$ with $t^{2s}$, we may assume that 
$a = t$.

Let $f\in K$ represent a non-zero element from $K/\Phi(K\oplus K)$. 
By taking $\Phi((x,0))$, we may assume that 
$f$ is not a square. By Hensel's Lemma, the equation 
 $h(v) = v+tv^2 = f$ can be solved for any $f\in R$. Thus $f$ can be represented by a
 negative Laurent polynomial
 $f = \sum_{k=0}^n c_kt^{-2k-1}$. Since
 $$
 h(ct^{-i}) = ct^{-i}+c^2t^{-2i},
$$
we see that any $t^{-2k-1}$ belongs to $\im(\Phi)$. 
 Thus there are no non-trivial torsors.
 \end{proof}

If  $\sfU_K$ is  a quasi-elliptic  unipotent group over strictly local $K$ of characteristic 3 a set of 
unique representatives of $K/\Phi(K\oplus K)$ was found by W. Lang \cite[Theorem 2.1]{Lang}. 

\begin{proposition}\label{langmult} Assume $p = 3$. Let $X$ be a non-trivial torsor of a quasi-elliptic unipotent group  
$\sf\sfU_K$.
Then $X$ is isomorphic to an affine curve over $K$ 
  given by one of the following equations:
$$u^3+v+t^kv^3+t^{-k}q_n(t^{-3}),$$
where $k = 1,2,4,5$ and $q_n(T)$ is a polynomial of some degree $n$. 
\end{proposition}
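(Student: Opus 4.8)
\emph{Strategy.} The plan is to normalize the integral model so that $a=t^{k}$, restate the classification of torsors as a description of $K/\Phi(K\oplus K)$, and then push an arbitrary representative $f\in K$ into the required shape by a descending induction on its polar part.

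\emph{Normalization.} By Proposition~\ref{queen} (case $p=3$), $\sfU_K$ is defined by a Russell equation $u^{3}+v+av^{3}=0$ with $a\notin K^{3}$. Using the permitted changes of a Russell equation — adding a cube to $a$, as noted after Theorem~\ref{russell}, together with the finer transformations of \cite[Prop.~2.3]{Russell} — and the facts that $\Bbbk$ is algebraically closed and that $w\mapsto w^{e}$ is bijective on $1+tR$ whenever $3\nmid e$, one reduces $a$ to a monomial $t^{k}$ with $k\in\{1,2,4,5\}$; this is precisely the classification over a strictly local field. Set $\Phi(u,v)=u^{3}+v+t^{k}v^{3}$ and $\tau(v):=v+t^{k}v^{3}$. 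By \eqref{kumerexseq1}, a torsor of $\sfU_K$ is the affine curve $\Phi(u,v)+f=0$ for some $f\in K$, and two values of $f$ give isomorphic torsors exactly when they differ by an element of $\Phi(K\oplus K)=K^{3}+\tau(K)$.

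\emph{First reductions.} Since $\Bbbk$ is perfect, $K^{3}=\Bbbk((t^{3}))$, so any term of $f$ with exponent divisible by $3$ may be discarded. Moreover $\tau$ restricts to a bijection of $R$ onto $R$: it is additive, $\tau-\mathrm{id}=t^{k}(\,\cdot\,)^{3}$ strictly raises valuations, and the $v$-derivative of $v+t^{k}v^{3}-g$ is $1$, so Hensel's lemma applies. Hence $R\subset\Phi(K\oplus K)$, and we may assume $f=\sum_{m\ge1,\ 3\nmid m}e_{m}t^{-m}$ is a polynomial in $t^{-1}$ all of whose exponents are prime to $3$. The basic computational identity is
$$\tau\big(ct^{-i}\big)=ct^{-i}+c^{3}t^{\,k-3i}.$$

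\emph{Descending reduction.} Each exponent $-m$ of $f$ satisfies $-m\equiv k$ or $-m\equiv -k\pmod3$; the admissible exponents, those occurring in $t^{-k}q_{n}(t^{-3})$, are exactly $-k,-k-3,-k-6,\dots$, i.e.\ the class $-k\pmod3$ of magnitude $\ge k$, and the only non-admissible exponent prime to $3$ in that class is $t^{-1}$ (if $k=4$) or $t^{-2}$ (if $k=5$). We eliminate, in decreasing order of magnitude, each term $e_{M}t^{-M}$ with $-M\equiv k\pmod3$: when $M>k/2$, subtracting $\tau\big(e_{M}^{1/3}t^{-(M+k)/3}\big)$ kills $e_{M}t^{-M}$ and creates a single new term at $t^{-(M+k)/3}$, which is strictly less negative, so the induction on $\max\{M:-M\equiv k\}$ terminates, the new terms being re-fed into the procedure or absorbed into $K^{3}$; for the finitely many exceptions $M\le k/2$ — the cases $(k,M)\in\{(2,1),(4,2)\}$, where $\tau(ct^{-M})=(c+c^{3})t^{-M}$, and $(k,M)=(5,1)$, where the extra monomial of $\tau(ct^{-1})$ lies in $R$ — one clears $e_{M}t^{-M}$ using surjectivity of $c\mapsto c+c^{3}$ on $\Bbbk$ (valid since $\Bbbk$ is algebraically closed) or because that extra monomial is in $R$. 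Analogous substitutions ($\tau(ct^{-1})=ct^{-1}+c^{3}t$ for $k=4$, $\tau(ct^{-2})=ct^{-2}+c^{3}t^{-1}$ for $k=5$) remove the one exceptional admissible-class exponent. When the process stops, $f$ is congruent modulo $\Phi(K\oplus K)$ to an element of $\bigoplus_{l\ge0}\Bbbk\,t^{-k-3l}=t^{-k}\Bbbk[t^{-3}]$; writing it as $t^{-k}q_{n}(t^{-3})$ gives the asserted form, with $q_{n}\ne0$ because $X$ is a non-trivial torsor.

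\emph{Main difficulty.} The substantive work is the bookkeeping in the last step: organizing the three residue classes mod $3$, checking that every newly produced monomial is either strictly closer to $0$ (so the induction is well-founded) or falls into $K^{3}$ or $R$, and dispatching the finitely many small exponents where the two monomials of $\tau(ct^{-i})$ collide — the second place where algebraic closedness of $\Bbbk$ is used. (The statement claims only the existence of a representative of this shape; that the representatives are actually \emph{unique}, as in \cite[Thm.~2.1]{Lang}, would require an additional injectivity argument not needed here.)
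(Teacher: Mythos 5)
The paper gives no argument for this proposition beyond the citation of Lang's Theorem~2.1, so your self-contained computation of $K/\Phi(K\oplus K)$ is genuinely different in kind. The core of it --- the descending elimination of the pole orders in the residue class $\equiv k \pmod 3$ via $\tau\bigl(e_M^{1/3}t^{-(M+k)/3}\bigr)$, the well-foundedness for $M>k/2$, the collisions $(k,M)=(2,1),(4,2)$ where $\tau(ct^{-M})=(c+c^3)t^{-M}$ and surjectivity of $c\mapsto c+c^3$ on $\Bbbk$ is needed, the case $(k,M)=(5,1)$ and the exceptional admissible exponents $t^{-1}$ (for $k=4$) and $t^{-2}$ (for $k=5$) --- is correct and complete, and is essentially Lang's argument.

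There is, however, a gap in the normalization step. The Russell-equation equivalences you invoke act on $a$ by $a\mapsto c^6a+g^3$ with $c\in K^{*}$, $g\in K$ (this is what adding a cube together with the rescaling $(u,v)\mapsto(cu,c^3v)$ amount to, and for a regular genus-one compactification these exhaust the $K$-isomorphisms, being the Weierstrass substitutions for $y^2=x^3+a$). Since $c^6=(c^2)^3\in K^3=\Bbbk((t^3))$, multiplication by $c^6$ preserves the decomposition $K=K_0\oplus K_1\oplus K_2$ according to the residue mod $3$ of the exponent, and adding $g^3$ only alters the $K_0$-component; hence whether the $K_1$- and $K_2$-components of $a$ are each zero or nonzero is an invariant of the group over the fixed field $K$. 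For $a=t+t^2$ both are nonzero, while for any monomial $t^k$ one of them vanishes, so $a=t+t^2$ cannot be brought to the form $t^k$ by these transformations: the reduction as you justify it fails. What saves the statement is a change of uniformizer, i.e.\ a $\Bbbk$-automorphism of $K$: after discarding the $K_0$-part one has $\nu(a)=N$ with $3\nmid N$, so $a=\gamma t^N(1+h)$ with $h\in tR$, and $s:=t\bigl(\gamma(1+h)\bigr)^{1/N}$ is a uniformizer with $a=s^N$ --- this is where the bijectivity of $w\mapsto w^N$ on $\Bbbk^{*}(1+tR)$ for $3\nmid N$ is actually used --- after which multiplication by $s^{\pm 6}\in (K^{*})^6$ brings $N$ into $\{1,2,4,5\}$. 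You should therefore either state that $\sfU_K$ is taken in the normal form $u^3+v+t^kv^3=0$ from the outset (the reading the paper intends), or make explicit that the asserted isomorphism is allowed to move $K$ by a change of uniformizer; as written, the claim that Russell's Proposition~2.3 alone reduces $a$ to a monomial is false.
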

 
The case  $p = 2$ turns out to be more complicated \cite[4.8]{CDL}. We have 

\begin{proposition} Assume $p = 2$. let $X$ be a non-trivial torsor of a quasi-elliptic 
unipotent group $\sf\sfU_K$. Then $X$ is isomorphic to an affine curve over $K$ given by one 
of the following equations. 
\begin{enumerate}
\item  $u^2+v+tv^4+ t^{-1}q_n(t^{-4}) = 0$,
\item  $u^2+v+t^3v^4+t^{-3}q_n(t^{-4}) = 0$, 
\item  $u^2+v+t^5v^4+t^{-5}q_n(t^{-4}) = 0$, 
\item  $u^2+v+t^{2s+1}\epsilon^2v^2+tv^4+t^{-1}q_n(t^{-4}) = 0$,
\item  $u^2+v+t^{2s+1}\epsilon^2v^2+t^{2k+1}v^4+t^{-5}q_n(t^{-4}) = 0, k = 1,2$,
\item $u^4+v+(\epsilon^4+t\epsilon_2^4+t^2\epsilon_3^4+t^3\epsilon_3^4)v^2+ t^{-1}q_n(t^{-4}) = 0$,
\item $u^4+v+tv^2 +t^{-2}q_n(t^{-4}) = 0$,
\item  $u^4+v+t^2(\epsilon^4+t\epsilon_2^4+t^2\epsilon_3^4+t^3\epsilon_3^4)v^2 +t^{-3}q_n(t^{-2}), $
\item $u^4+v+t^3v^2+ t^{-6}q_n(t^{-4})= 0$,
\end{enumerate}
where $q_n(T)$ is a polynomial of some degree $n$.

\end{proposition}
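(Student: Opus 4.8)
The plan is to run, for each of the three families of Queen equations in characteristic~$2$ (Proposition~\ref{queen}), the three-step reduction used by W.~Lang in characteristic~$3$ (Proposition~\ref{langmult}, \cite[Theorem~2.1]{Lang}) and carried out for $p=2$ in \cite[4.8]{CDL}. By the exact sequence \eqref{kumerexseq1} a torsor of $\sfU_K$ is the affine curve $\Phi(u,v)+f=0$ for some $f\in K$, two such torsors coincide exactly when the $f$'s differ by an element of $\Phi(K\oplus K)$, and, since an isomorphism of torsors may also replace the Russell equation of $\sfU_K$ by another equation of an isomorphic group (\cite[Proposition~2.3]{Russell}), every reduction step must record the induced change of $f$. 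Step~1 normalizes the group equation over $K=\Bbbk((t))$: starting from $u^2+v+a_1v^2+a_2v^4$, $u^4+v+av^2$ or $u^4+v+av^2+b^2v^4$, one applies the rescalings $u\mapsto\lambda u$, $v\mapsto\mu v$ (keeping Russell form), the substitutions $u\mapsto u+(\text{$p$-polynomial in }v)$, the addition of $p^n$-th powers to the top coefficient, and — in the type~(c) case — the isogeny substitutions through the non-regular weighted compactification and its normalization used in the proof of Proposition~\ref{queen}, so as to bring $a_1,a_2$ (resp.\ $a,b$) into the monomial-and-$\epsilon$ shapes $t$, $t^3$, $t^5$, $t^{2s+1}\epsilon^2$, $\epsilon^4+t\epsilon_2^4+t^2\epsilon_3^4+t^3\epsilon_3^4$, etc.\ listed in (1)--(9). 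Here one uses that $\Bbbk$ is algebraically closed, so $(\Bbbk^\times)^{p}=\Bbbk^\times$ and the subfield of $p$-th powers of $K$ is $\Bbbk((t^p))$.

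Step~2 reduces $f$ to a \emph{polar part}. Replacing $f$ by $f+\Phi(x,0)=f+x^{p^n}$ with $n\in\{1,2\}$ removes all $p^n$-th powers, and since $\Phi(0,v)=v+a_1v^p+\cdots+a_mv^{p^m}$ has unit linear coefficient, Hensel's lemma makes it a bijection of $R=\Bbbk[[t]]$ with itself; subtracting $\Phi(0,v)$ for a suitable $v\in R$ lets one take $f$ to be a Laurent polynomial in $t^{-1}$ of strictly negative valuation. Step~3, the heart of the matter, reduces this polar part by a descending induction on $-\nu(f)$: one cancels the leading coefficient of $f$ by subtracting $\Phi(\alpha t^{-i},0)=\alpha^{p^n}t^{-ip^n}$ or $\Phi(0,v)$ for a well-chosen $v$ of negative valuation, which is possible whenever $-\nu(f)$ lies in the subset of $\bbZ_{>0}$ generated by $p^n$, by $jp^m-\nu(a_m)$ $(j\ge 1)$, and — because the low-order part of $a$ in cases (6),(8) runs over four consecutive exponents — by the additional residues obtained on multiplying a square by $a$; the lower-order corrections so produced have strictly smaller $-\nu$, so the induction terminates. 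What survives is a single monomial $c\,t^{-k}$ together with terms in one fixed residue class modulo $4$ (modulo $2$ in case~(8)), which after factoring out $t^{-k}$ is a polynomial $q_n(t^{-4})$ (resp.\ $q_n(t^{-2})$); running this through the nine normal forms produces exactly equations (1)--(9), with admissible exponents $k\in\{1,3,5\}$ in the $u^2$-cases and $k\in\{1,2,3,6\}$ in the $u^4$-cases, each being the smallest negative valuation that cannot be cancelled.

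I expect the genuine difficulty to lie in two places. The first is Step~1: showing that the Queen type~(c) equation over $\Bbbk((t))$ collapses into the families listed and pinning down which of the parameters $\epsilon,\epsilon_i,s$ are essential rather than absorbable — this is a local refinement of the classification in \cite{Queen}, and it is where the arithmetic of $K^\times/(K^\times)^2$ and of separable $p$-polynomials over $\Bbbk[[t]]$ is most delicate. The second is the combinatorial bookkeeping of Step~3: for each normal form one must check that after cancelling the leading term every newly created term has $-\nu$ below a controlled threshold, so that the process is finite, and then identify the surviving residue class and the exponent $k$. By contrast the Hensel reduction of Step~2 and the elimination of $p^n$-th powers are routine once the normal forms are fixed, and — exactly as in the characteristic~$3$ case — we do not claim that the members of the list (1)--(9) are pairwise non-isomorphic.
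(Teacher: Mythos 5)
Your overall strategy is the right one, and it is in fact the only one on offer: the paper gives no proof of this proposition but simply quotes it from \cite[4.8]{CDL}, which in turn runs W.~Lang's characteristic-$3$ argument (Proposition~\ref{langmult}) for $p=2$ --- i.e.\ exactly your three steps of normalizing the Russell equation over $\Bbbk((t))$, identifying torsors with classes in $K/\Phi(K\oplus K)$ via \eqref{kumerexseq1}, and reducing a representative $f$ first to its polar part by Hensel's lemma and then, by descending induction on $-\nu(f)$, to the residues that the image of $\Phi$ cannot reach. One simplification you miss in Step~1: over $K=\Bbbk((t))$ with $\Bbbk$ algebraically closed one has $[K^{1/2}:K]=2$, so the condition $[K(a^{1/2},b^{1/2}):K]=4$ defining Queen's case 1(c) can never hold; that case is vacuous over the strictly local field, no detour through the normalization of the weighted compactification is needed, and this is why no equation of the shape $u^4+\cdots+b^2v^4$ appears in the list.

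The genuine gap is that the proposal stops exactly where the proposition begins. The statement to be proved \emph{is} the explicit list (1)--(9) --- the exponents $k\in\{1,3,5\}$ on $v^4$, the shapes $t^{2s+1}\epsilon^2$ and $\epsilon^4+t\epsilon_2^4+t^2\epsilon_3^4+t^3\epsilon_4^4$ of the middle coefficients, and the surviving polar parts $t^{-k}q_n(t^{-4})$ (resp.\ $t^{-k}q_n(t^{-2})$) --- and none of this is derived: Step~1 asserts without computation that the rescalings, the $p$-polynomial substitutions $u\mapsto u+P(v)$, and the addition of $p^n$-th powers to the top coefficient bring the Queen equations into these normal forms, and Step~3 asserts without computation that the descending induction terminates with precisely these residue classes and these minimal exponents $k$. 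You correctly flag both points as ``the genuine difficulty,'' but flagging them is not the same as resolving them; as written, the argument would equally well ``prove'' a list with different exponents. To close the gap you must (i) carry out the valuation bookkeeping showing that the admissible changes of the Russell equation move $\nu(a_2)$ (resp.\ $\nu(a)$) only within a fixed congruence class, and that after removing $p^n$-th-power parts the stated values are the complete set of representatives compatible with $\sfU_K$ being $K$-wound of genus one; and (ii) for each normal form compute the image under $\Phi$ of monomials $\alpha t^{-i}$ in each of the two arguments, check that cancelling a leading term only creates terms of strictly smaller pole order (so the induction terminates), and verify that the complement of $\Phi(K\oplus K)$ in the polar part of $K$ is exactly the stated set of Laurent polynomials. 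That finite but unavoidable case analysis is the content of \cite[4.8]{CDL} and of the proposition itself.
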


Here the representatives are determined uniquely from the equation of $\sfU_K$ only in cases 
(1),(2), (3), (7) and (9).

\begin{remark} Let $X_t$ be the closed fiber of a relatively minimal model of a non-trivial torsor over $R$. 
Considered as a Cartier divisor on regular scheme $X$, it is equal to $p\bar{X}_t$ for some divisor $\bar{X}_t$ contained 
in $X_t$. A fiber is called tame if $f:X\to C$ is cohomologically flat \cite{Raynaud}, or, equivalently, 
 the normal sheaf $\calO_{\bar{X}_t}(\bar{X}_t)$ is an element of order 
$p$ in $\Pic(\bar{X}_t)$. It is conjectured by W. Lang that $X_t$ is tame if and only $n = 0$. 
The number $n$ is conjecturally  related to the length torsion subgroup of $H^1(X,\calO_X)$
\end{remark}

 \section{Torsors of unipotent groups of genus $g$: global case} \label{S7}
In this section we assume that $C$ is a smooth complete curve over an 
 algebraically closed field of characteristic $p > 0$. Let $K = \Bbbk(C)$ be its field of rational 
 functions. Let 
 $\sfU_K$ be a genus $g$ unipotent group over $K$ considered an abelian sheaf in the \'etale 
 topology of $K$. 
 
 Let us first get rid of the case  $g = 0$. Let $X_K$ be a torsor of $U_K$. Since $X_K$ and $U_K$ become 
 isomorphic after a separable extension of $K$, a regular compactification $\bar{X}_K$ is a genus 
 0 curve and hence isomorphic to a conic over $K$. By Tsen's Theorem the 
 global field $K$ is a $C_1$-field, hence every separable torsor is trivial \cite{SerreGalois}. This also follows from 
 the fact that $\Br(K) = \{0\}$.

From now on we assume that $g > 0$. We denote by $\bfU$ its N\'eron model.  We also consider $\bfU$ as an abelian sheaf in the \'etale 
 topology of $C$.

 Let 
 $$\iota: \eta= \Spec(K) \hookrightarrow C$$
 be the inclusion morphism of the generic point $\eta$ of $C$. 
 The Leray spectral sequence for the morphism $\iota$ and the sheaf $\sfU_K$ gives an exact sequence
 \beq
 0\to H_{\et}^1(C,\iota_*\sfU_K)\to H^1(K,\sfU_K) \to H^0(C,R^1\iota_*\sfU_K) \to H^2(C,\iota_*\sfU_K)\to H^2(K,\sfU_K).
 \eeq
 By definition of the N\'eron model, the sheaf $\iota_*\sfU_K$ is represented by the N\'eron model $\bfU$ of $\sfU_K$. 
The fiber  $(R^1\iota_*\sfU)(t)$ of $R^1\iota_*\sfU$ at a closed point $t\in C$ is isomorphic to $H^1(K_t,\sfU_{K_t}).$ The homomorphism 
$$\loc_t:H^1(K,\sfU_K) \to H^0(C,R^1\iota_*\sfU) \to (R^1\iota_*\sfU)(t) = H^1(K_t,\sfU_{K_t})$$
assigns to an isomorphism class $[X]$ of a torsor $X$ of $\sfU_K$ the isomorphism class of the torsor 
$X_{K_t}$ of $\sfU_{K_t}$. Since each torsor $X$ splits over some separable extension $L$ of $K$, $\loc_t([X]) = 0$ for 
any point $t$ which is not ramified in $L/K$. Let $\hat{R}_t$ be the formal completion of $R_t$ and $\hat{K}_t$ be its 
field of fractions. We have an isomorphism
$$H^1(K_t,\sfU_{K_t}) \cong H^1(\hat{K}_t,\sfU_{\hat{K}t})$$
\cite{Elkik}, so we are in a situation from the previous section. Passing to cohomology in exact 
sequence \eqref{kumerexseq1}, we obtain $H^2(K,\sfU) = 0$ \cite{Milne}. So, we get an exact sequence 
\beq\label{oggsha}
 0\to H_{\et}^1(C,\iota_*\sfU)\to H^1(K,\sfU_K) \to \bigoplus_{t\in C}H^1(\hat{K}_t,\sfU_{\hat{K}_t}) \to H_{\et}^2(C,\iota_*\sfU)\to 0.
 \eeq
The group $\Sh(\sfU,K):= H_{\et}^1(C,\bfU)$ is an analog of the Tate-Shafarevich group of an abelian variety. The 
group $H_{\et}^2(C,\bfU)$ is the group of obstructions to define a global torsor in terms of local torsors.

\begin{theorem}\label{mordellweil} Let $C$ be a complete curve over $\Bbbk$,  $\sfU_K$  a genus 
$g$ unipotent group over $K$, and $X_K$  its regular compactification. 
Then  $\Jac(X_K)(K)$ is a finite group killed by $p^{n'}$, where $n'$ is the pre-height of $\sfU_K$ (see 
section 3).
\end{theorem}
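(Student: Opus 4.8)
The plan is to realize $\Jac(X_K)(K)$ as a subgroup of a cohomology group that we can bound, and to combine finiteness of torsion in each localization with a global constraint. First I would invoke Theorem~\ref{thm:ptorsion}: $\Jac(X_K) = \bfpic^0(X_K)$ is a $p^{n'}$-torsion $F$-wound unipotent group over $K$, so $\Jac(X_K)(K)$ is automatically an elementary-type abelian group killed by $p^{n'}$, and the only content is finiteness. Since $\Jac(X_K)$ admits a regular compactification (namely inside the relative Picard scheme of $X_K$), it has a N\'eron model $\bfJ$ over $C$ by Proposition in Section~4, and by Theorem~\ref{unipotency} $\bfJ$ is a unipotent group scheme over $C$. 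The N\'eronian property gives $\Jac(X_K)(K) = \bfJ(C)$, so it suffices to prove $\bfJ(C)$ is finite.

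Next I would use the structural description from Theorem~\ref{weisfeiler}: after localizing and applying Russell's equation, $\bfJ^\circ$ embeds as a closed subgroup of a group scheme $\bfV$ that is locally isomorphic to a vector group scheme, fitting into the exact sequence $0\to \bfJ^\circ\to \bfV\overset{\mu}{\to}\bbV(\calM^{\otimes -p^{n'}})\to 0$ for a suitable invertible sheaf $\calM$ on $C$ (playing the role of $\Lie(\bfJ)$, here built from the several Russell coefficients of the $g$-dimensional group, so more precisely $\bfV$ sits in an extension of vector group schemes and $\mu$ lands in a vector group scheme $\bbV(\calN)$ with $\calN$ locally free of rank $g$). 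Passing to $C$-sections and using $\bbV(\calE)(C) = H^0(C,\calE^\vee)$, a finite-dimensional $\Bbbk$-vector space since $C$ is complete, we get that $\bfV(C)$ is finite-dimensional over $\Bbbk$; as it is also $p^{n'}$-torsion (being built from pieces of the $p$-torsion group $\bfJ$, or directly since $\bfJ(C)$ is $p^{n'}$-torsion), a $p^{n'}$-torsion finite-dimensional $\bbF_p$-vector space is finite. Hence $\bfJ^\circ(C)$ is finite. Finally, $\bfJ(C)/\bfJ^\circ(C)$ injects into $\bigoplus_{t\in C}\pi_0(\bfJ_t)$, a finite direct sum of finite groups (each $\pi_0$ of a N\'eron model fiber is finite), which settles $\bfJ(C)$ finite; alternatively one observes $\bfJ = \bfJ^\circ$ in the connected-fiber situation here. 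Therefore $\Jac(X_K)(K) = \bfJ(C)$ is finite and killed by $p^{n'}$.

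The main obstacle I expect is making the "locally a vector group scheme" description of $\bfJ$ in genus $g$ fully explicit, i.e. producing the analog of the sequence \eqref{exseqU} when $\sfU_K$ is replaced by the $g$-dimensional group $\Jac(X_K)$: Theorem~\ref{weisfeiler} as stated is phrased for $\bfU^\circ$ with $\sfU_K$ one-dimensional, and one must check its proof goes through for the higher-dimensional $\Jac(X_K)$ — that the identity component of its N\'eron model is again cut out by $p$-polynomial equations with coefficients in $\calO_C$ inside a bundle of vector groups. Granting that (it follows from the same Weisfeiler-type result \cite[Theorem 2.8]{Waterhouse} on smooth connected integral models, applied fiber by fiber over $C$, together with gluing as in Section~4), the rest is the soft argument above: global sections of a coherent sheaf on a complete curve over $\Bbbk$ form a finite-dimensional $\Bbbk$-space, and a $p^{n'}$-torsion such space is a finite set. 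One cosmetic subtlety is whether $\calN$ (the target of $\mu$) could fail to be locally free; but $\bbV(\calN)$ only needs to be a vector group scheme, which holds after the normalization step in Russell's equation, so this does not obstruct the finiteness conclusion.
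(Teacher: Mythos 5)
Your reduction of the statement to two claims --- ``killed by $p^{n'}$'' via Theorem \ref{thm:ptorsion}, and finiteness of $\bfJ(C)=\Jac(X_K)(K)$ via the N\'eron property --- is a reasonable start, but the finiteness argument has a fatal gap. After embedding $\bfJ^\circ$ (granting the higher-dimensional analogue of Theorem \ref{weisfeiler}, itself nontrivial since Russell's equation and the Waterhouse--Weisfeiler classification of integral models are one-dimensional statements) into a group scheme $\bfV$ locally isomorphic to a vector group scheme, you observe that $\bfV(C)=H^0(C,\calE^\vee)$ is a finite-dimensional $\Bbbk$-vector space and conclude that, being $p^{n'}$-torsion, it is finite. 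This is false: $\Bbbk$ is algebraically closed, hence infinite, so any nonzero finite-dimensional $\Bbbk$-vector space is an \emph{infinite} $p$-torsion group ($H^0(C,\calO_C)=\Bbbk$ already illustrates this). Finite-dimensionality over $\Bbbk$ does not give finite-dimensionality over $\bbF_p$. What actually has to be shown is that the kernel of the $p$-polynomial map $\mu$ on $\bfV(C)$ is finite, and this is essentially the whole content of the theorem: it uses that $\sfU_K$ is $K$-wound, and in the paper's later sections it is handled either by invoking this very theorem or, under the extra hypothesis $H^0(C,\Lie(\bfU))=0$, by the Frobenius-semilinear algebra of Lemma \ref{katz} (the fixed locus of a $p^k$-linear map on a finite-dimensional $\Bbbk$-space is a finite-dimensional $\bbF_{p^k}$-space). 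Your proposal supplies neither ingredient.

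For comparison, the paper's proof avoids the N\'eron model entirely and works on the smooth projective surface $X/\Bbbk$ given by a relatively minimal model of $X_K$: by the Poincar\'e Reducibility Theorem, $\bfpic_{X/\Bbbk}^0$ is isogenous to $A\times\Jac(C)$, and since $\Jac(X_K)$ is affine the abelian variety $A$ dies under restriction to the generic fiber while the kernel of that restriction is generated by $f^*\Jac(C)$ and the (finitely many) fiber components; this forces $A=0$, so $\Jac(X_K)(K)$ is a quotient of a subgroup of the finitely generated N\'eron--Severi group, hence finitely generated. Finitely generated plus $p^{n'}$-torsion gives finite. If you want to salvage your route, you would need to prove directly that $\ker(\mu)$ on global sections is finite, which amounts to redoing this geometric input in semilinear-algebra form.
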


\begin{proof} Let $f:X\to C$ be a regular relatively minimal model of $X_K$. Here $X$ is a 
smooth projective surface over $\Bbbk$. Its Picard variety $\bfpic_{X/\Bbbk}^0$ is an abelian variety.
Since $\Bbbk$ is algebraically closed we may identify any variety over $\Bbbk$ with its set of $\Bbbk$-points.
Let $\Pic^0(X) = \bfpic_{X/\Bbbk}^0(\Bbbk)$. It is a subgroup of  $\bfpic_{X/\Bbbk}(\Bbbk) = \Pic(X)$ and the quotient 
$\NS(X) = \Pic(X)/\Pic^0(X)$ is the N\'eron-Severi group of the surface $X$. It known to be a finitely generated
abelian group. The pull back morphism 
$$f^*:\Jac(C) = \bfpic_{C/\Bbbk}^0\to \bfpic_{X/\Bbbk}^0$$
is a  
 homomorphism of abelian varieties.  
 By the Poincar\'e Reducibility Theorem (see \cite{Mumford}, Chapter 4, \S 19), 
 there exists an abelian variety $A$ over $\Bbbk$  and an isogeny
$$
A\times \Jac(C) \,\to\, \bfpic_{X/\Bbbk}^0.
$$
Since $\Jac(X_K)$ is an affine algebraic group, the image of $A$ 
under the restriction morphism $r_K:\bfpic_{X/\Bbbk}^0\to \bfpic_{X_K/K}^0 = \Jac(X_K)$, 
is equal to zero. However, the kernel of $r_\eta$ is generated by $f^*\Jac(C)$ and the subgroup  of 
divisor classes of irreducible components of fibers of $f$. The latter group is 
finitely generated abelian group, and hence $A = 0$. 
 We have $f^*\Jac(X)\subset \Pic^0(X)$, and hence $\Jac(X_K)(K)$ is finite generated abelian group.
On other hand, by Theorem \ref{thm:ptorsion}, $\Jac(X_K)$ is killed by  $p^{n'}$. This proves the assertion.
\end{proof}

\begin{corollary} Let $\sfU_K$ be an unipotent group of genus $g > 0$ over 
the field $K$ of rational function of 
an irreducible algebraic curve  over an algebraically closed field. 
Then $\sfU(K)$ is an elementary $p$-group.
\end{corollary}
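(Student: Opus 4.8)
The plan is to obtain the statement as a short consequence of Theorem~\ref{mordellweil} together with the fact that every inseparable form of $\bbG_a$ is a $p$-torsion group. Since $\sfU(K)$ depends only on the field $K$, and every irreducible algebraic curve over $\Bbbk$ has the same field of rational functions as its smooth projective model, I would first replace $C$ by a smooth complete curve over $\Bbbk$, so that Theorem~\ref{mordellweil} applies verbatim.

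The exponent-$p$ part is then immediate from Russell's description (Theorem~\ref{russell}): $\sfU_K$ is a closed subgroup of the vector group $\bbG_{a,K}^2$, cut out by a Russell $p$-polynomial, and $\bbG_{a,K}^2$ is annihilated by multiplication by $p$; hence so is $\sfU_K$, and in particular the abelian group $\sfU(K)$ is killed by $p$, i.e. it is an $\bbF_p$-vector space.

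For finiteness I would use the hypothesis $g>0$. Let $X_K=\bar{\sfU}_K$ be the regular compactification of $\sfU_K$. As recalled before Theorem~\ref{thm:ptorsion} (following \cite{Achet}), there is a $\sfU_K$-equivariant closed immersion $\sfU_K\hookrightarrow \Jac(X_K)$ of $\sfU_K$ into the generalized Jacobian of $X_K$, which is an $F$-wound unipotent group of dimension $g$. Taking $K$-points yields an injection $\sfU(K)\hookrightarrow \Jac(X_K)(K)$, and Theorem~\ref{mordellweil} tells us that $\Jac(X_K)(K)$ is finite. Hence $\sfU(K)$ is a finite abelian group killed by $p$, that is, an elementary abelian $p$-group, which is the assertion.

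I do not anticipate any real difficulty: the corollary is essentially a repackaging of Theorem~\ref{mordellweil}. The only points to spell out are the harmless reduction to a smooth complete base curve and the use of the equivariant embedding $\sfU_K\hookrightarrow \Jac(X_K)$, which is precisely where the hypothesis $g>0$ enters (for $g=0$ the Jacobian is trivial, so this argument gives only the exponent-$p$ statement — which, however, still holds in that case since quasi-rational groups exist only in characteristic $2$).
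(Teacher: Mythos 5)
Your argument is correct and is essentially the one the paper intends: the $p$-torsion claim comes from the Russell equation realizing $\sfU_K$ inside $\bbG_{a,K}^2$, and finiteness comes from the embedding $\sfU_K\hookrightarrow\Jac(X_K)$ together with Theorem~\ref{mordellweil}. The reduction to the smooth complete model of the curve and the remark about $g=0$ are harmless and correctly handled.
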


Let $f:X\to C$ be a regular  relatively minimal model of $X_K$ over $C$. 
By definition, it does not contain 
$(-1)$-curves in fibers. If $g(C) > 0$, it also does not contain other $(-1)$-curves because they cannot map 
surjectively onto $C$. This $X$ is a minimal smooth algebraic surface over $\Bbbk$. If $g = 1$, it is 
a quasi-elliptic surface of Kodaira dimension $\le 1$. If $g > 1$, it is a surface of general type. 

Let $\frakC$ be the closure of the boundary point $P_\infty$ in $X_K$. It is a purely inseparable cover of $C$ of 
degree $p^k$, where $p^k$ is  the minimal degree of the splitting field of $\sfU_K$. 
Let 
$$X^\sharp = \{x\in X:f\ \text{is smooth at $x$}\}$$
The generic fiber $X_K^\sharp$ is equal to $\sfU_K$. 
Since $X\setminus X^{\sharp}$ is closed, the closure 
$\frakC$ of $P_\infty$ is contained in the complement $X\setminus X^\sharp$. 
 
 Let $X_0^\sharp$ be the open subset of $X^\sharp$ 
obtained by throwing away the irreducible components of fibers that do not intersect the zero section $\sfO$. The restriction of 
$\alpha$ to $X_0^\sharp$ defines a quasi-finite birational morphism of smooth affine schemes 
$\alpha_0:X_0^\sharp\to \bfU^\circ$. 
Applying the Zariski Main Theorem, we infer that $u_0$ is an isomorphism. Thus we can consider $\bfU^\circ$ as an open 
subscheme of $X^\sharp$ whose complement consists of irreducible components of fibers that do not intersect $\sfO$.

We know that the Neron model $\bfJ$ of $\Jac(X_K)$ is a maximal separated quotient of $\calP_{X/C'}$ 
by a discrete subsheaf supported at the points $t$ with reducible fiber $X_t$.  In particular its Lie
algebra $\Lie(\bfU)$ is isomorphic to the Lie algebra functor of $\calP_{X/C}'$ that, in its turn, is isomorphic 
to $R^1f_*\calO_X$ \cite[Proposition 1.3]{LLR}. The closed embedding 
$\sfU_K\hookrightarrow \Jac(X_K)$ defines a morphism $\sfU\to \bfJ$, and hence an injective homomorphism of 
the Lie algebras sheaves 
\beq\label{lieinclusion} 
\Lie(\bfU) \hookrightarrow \Lie(\bfJ) \cong R^1f_*\calO_X.
\eeq

 \begin{proposition}\label{vanishing} Let $f:X\to C$ be a relatively minimal model of a 
unipotent group $\sfU_K$ of genus $g> 0$ over a smooth projective curve $C$ over $\Bbbk$.  
 Suppose that the Picard scheme of $X$ is reduced. Then 
$$H^0(C,\Lie(\bfU)) = \{0\}.$$ 
\end{proposition}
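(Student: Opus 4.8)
The plan is to use the inclusion of Lie algebra sheaves from \eqref{lieinclusion}, namely $\Lie(\bfU)\hookrightarrow \Lie(\bfJ)\cong R^1f_*\calO_X$, and to combine it with the Leray spectral sequence for $f:X\to C$ applied to the structure sheaf $\calO_X$. The reducedness of the Picard scheme of $X$ is exactly the hypothesis that guarantees $\dim H^1(X,\calO_X) = \dim \bfpic_{X/\Bbbk}^0$, i.e. that there is no ``extra'' cohomology coming from a non-reduced Picard scheme; equivalently, the Bockstein-type obstructions vanish. First I would write down the five-term exact sequence of low-degree terms of the Leray spectral sequence $H^p(C,R^qf_*\calO_X)\Rightarrow H^{p+q}(X,\calO_X)$:
$$0\to H^1(C,\calO_C)\to H^1(X,\calO_X)\to H^0(C,R^1f_*\calO_X)\overset{d_2}{\to} H^2(C,\calO_C).$$
Since $C$ is a curve, $H^2(C,\calO_C) = 0$, so the restriction map $H^1(X,\calO_X)\to H^0(C,R^1f_*\calO_X)$ is surjective and we get
$$\dim H^1(X,\calO_X) = g(C) + \dim H^0(C,R^1f_*\calO_X).$$

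Next I would bring in the Picard schemes. On the one hand, $H^1(X,\calO_X) = \Lie(\bfpic_{X/\Bbbk})$, and reducedness of $\bfpic_{X/\Bbbk}$ gives $\dim H^1(X,\calO_X) = \dim \bfpic_{X/\Bbbk}^0 = \dim \bfpic_{X/\Bbbk}^0$. By the argument already used in the proof of Theorem \ref{mordellweil} (Poincar\'e reducibility together with the fact that $\Jac(X_K)$ is affine, so contributes nothing to the abelian part), the only abelian-variety part of $\bfpic_{X/\Bbbk}^0$ is $f^*\Jac(C)$, which is an abelian variety of dimension $g(C)$ isogenous to its image. Hence $\dim \bfpic_{X/\Bbbk}^0 = g(C)$, so $\dim H^1(X,\calO_X) = g(C)$. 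Comparing with the Leray computation forces $\dim H^0(C,R^1f_*\calO_X) = 0$, i.e. $H^0(C,R^1f_*\calO_X) = 0$ since it is a finite-dimensional $\Bbbk$-vector space. Finally, $H^0(C,\Lie(\bfU))\hookrightarrow H^0(C,\Lie(\bfJ)) = H^0(C,R^1f_*\calO_X) = 0$ by the left-exactness of global sections applied to the injection \eqref{lieinclusion}, which gives the claim.

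The step I expect to be the main obstacle is the passage $\dim H^1(X,\calO_X) = g(C)$, i.e. making precise that the only contribution to $\bfpic_{X/\Bbbk}^0$ is $f^*\Jac(C)$. This is where the hypothesis that $\bfpic_{X/\Bbbk}$ is reduced is essential — otherwise $\dim H^1(X,\calO_X)$ can strictly exceed $\dim\bfpic_{X/\Bbbk}^0$, and the argument collapses. One must check carefully that $f^*:\Jac(C)\to\bfpic_{X/\Bbbk}^0$ is an isogeny onto its image (or at least has finite kernel, which suffices for dimensions) and that no further abelian subvariety survives the restriction to the generic fiber; this is precisely the content reused from the proof of Theorem \ref{mordellweil}, where Poincar\'e reducibility and the affineness of $\Jac(X_K)$ are invoked. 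Everything else — the vanishing of $H^2(C,\calO_C)$, the five-term sequence, left-exactness of $H^0$ — is routine. A minor point to watch is that $R^1f_*\calO_X$ is a coherent sheaf on $C$ whose formation is compatible with the identification $\Lie(\bfJ)\cong R^1f_*\calO_X$ of \eqref{lieinclusion}, so that $H^0$ of the two sheaves genuinely agree; this was already asserted in the paragraph preceding the proposition (citing \cite[Proposition 1.3]{LLR}), so it may be invoked directly.
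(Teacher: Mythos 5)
Your proposal is correct and follows essentially the same route as the paper's proof: the five-term Leray sequence for $\calO_X$, the identification of the edge map with $f^*$ on Picard schemes, Poincar\'e reducibility plus affineness of $\Jac(X_K)$ to see that $\bfpic_{X/\Bbbk}^0$ has dimension $g(C)$, reducedness to equate this with $\dim H^1(X,\calO_X)$, and finally the inclusion $\Lie(\bfU)\hookrightarrow R^1f_*\calO_X$. Your write-up merely makes the dimension count (where reducedness enters) more explicit than the paper's terser ``$f^*$ must be an isogeny, hence $H^0(C,R^1f_*\calO_X)=0$.''
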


\begin{proof} 
Since the generic fiber is geometrically irreducible, $f_*\calO_X \cong \calO_C$. The Leray spectral sequence for $f:X\to C$ and the sheaf $\calO_X$  gives an exact sequence
\beq\label{leray1}
0\to H^1(C,\calO_C) \to H^1(X,\calO_X) \to H^0(C,R^1f_*\calO_X) \to H^2(C,\calO_C) = 0.
\eeq
The linear spaces $H^1(C,\calO_C)$ and $H^1(X,\calO_X)$ are identified with the Lie algebras of the 
Picard schemes $\bfpic_{C/\Bbbk}^0$ and $\bfpic_{X/\Bbbk}^0$. Although the former scheme is 
always reduced, the latter one may not be reduced if $p_g(X) > 0$. The map between these 
spaces corresponds to the pull-back morphism of abelian varieties 
$f^*:\bfpic_{C/\Bbbk}^0\to \bfpic_{X/\Bbbk}^0.$ 
As in the proof of Theorem \ref{mordellweil}, using the  Poincar\'e Reducibility Theorem, we show 
that $f^*$ must be an isogeny,   
hence $H^0(C,R^1f_*\calO_X) = 0$.

Now the assertion  follows from the fact that $\sfU^\circ \subset \bfJ$, hence 
$\Lie(\bfU) \subset \Lie(\bfJ)$.
\end{proof}

\begin{example} If $X$ is a quasi-hyperelliptic surface, i.e. quasi-elliptic 
surface with no reducible fibers, then $C$ is an elliptic curve and  $\deg \calL = \deg R^1f_*\calO_X = 0$. In fact 
$\calL^{\otimes 4} \cong \calO_C$ if $p = 2$ and $\calL^{\otimes 6} \cong \calO_C$ if $p = 3$
\cite{BM}. In particular, it may occur that $\calL\cong \calO_C$ in which case 
$H^0(C,\calL) = \Bbbk$.  

  Since 
$R^if_*\calO_X = 0, i > 0$, the  Leray spectral sequence degenerates at page $E_2$ and 
 gives an isomorphism
\beq\label{leray2}
H^1(C,R^1f_*\calO_X) \cong H^2(X,\calO_X),
\eeq
and  the equality
\beq\label{leray3}
\chi(X,\calO_X) = -\deg R^1f_*\calO_X.
\eeq

If $g > 1$, the surface $X$ is of general type and a recent work of Yi Gu shows 
that $\chi(X,\calO_X) > 0$ \cite{Gu1}, \cite{Gu2}  
(eliminating a few possible cases where it may be not true from  \cite{Nick}). Thus in this case, 
$\deg R^1f_*\calO_X < 0$. However, this does not imply that $\deg \calL < 0$ since 
$R^1f_*\calO_X$ is not necessary a semi-stable vector bundle. In fact, it may occur that 
$H^0(C,\calL) \ne \{0\}$  and is isomorphic the Lie algebra $H^0(X,\Theta_{X/\Bbbk})$ of regular 
  vector fields on $X$ \cite[Theorem 4.1]{Lang1}. This happens for some Raynaud surfaces which we 
  will discuss in the next Example.

\end{example}

\begin{example}\label{raynaud} Assume that  $\sfU_K$ of genus $g > 0$ has Russell equation
$u^{p^n}+v+av^{p^n} = 0$. We can compactify the group in the usual way by homogenizing the equation 
$y^{p^n}+z^{p^n-1}x+ax^{p^n} = 0$. In the open set $x\ne 0$, the affine equation  is
$Y^{p^n}+Z^{p^n-1}+a= 0$. Its singular points are the 
zeroes of the differential $\omega = z^{p^n-2}dz+da$. Thus singular point has coordinate $z$ equal to zero 
and  $da$ must vanish at this point. 
To get a smooth compactification $f:X\to C$ would require to blow up 
this point that will force the fiber passing through the singular point to be reducible. 

Suppose that  $f:X\to C$ has no reducible fibers. 
Formula \eqref{transition1} implies that $da$ is well defined as a section  
$$da\in H^0(C,\calL^{\otimes -p^n(p-1)}\otimes \omega_{C/\Bbbk})$$
that does not vanish anywhere.  In this case, we also see that the closure 
$\frakC$ of $P_\infty$ is locally given by equation $Z= 0$ and it has a smooth point in each fiber. 
Thus $\frakC$ is smooth and 
$f:\frakC\to C$ is the iterated Frobenius map $\bfF^n:C\to C$. We have
$$\calL^{p^n(p-1)}\cong \omega_{C/\Bbbk}.$$
It implies that 
$$\deg \calL = \frac{2g(C)-2}{p^n(p-1)}$$
which is positive if $g(C) > 1$. 

If $g = 1$, we see that $\calL^{\otimes 4}\cong\calO_C$ if $p = 2$ (resp. $\calL^{\otimes 6}\cong  \calO_C$ if $p = 3$)
and $C$ in an elliptic curve. This agrees with classification of quasi-elliptic surfaces with no reducible fibers from 
\cite{BM}. If $\calL \cong \calO_C$,  
$\bfpic_{X/\Bbbk}$ is 
not reduced and 
 $\dim_\Bbbk H^0(C,\calL) = 1.$ 

Assume $n= 1$. Tne injection 
$\calO_C\to \bfF_*\calO_C$ corresponds to the inclusion $\calO_C^{p}\to \calO_C$ upstairs. 
Taking the differential 
$d:\calO_C\to \calB_{C/\Bbbk}^1$ with values in the sheaf of locally exact regular $1$-forms on $C$, we obtain an 
exact  sequence
$$0\to \calO_C\to \bfF_*\calO_C\to \calB\to 0$$
where $\calB = \bfF_*\calB_{C/\Bbbk}^1$ is a locally free sheaf of rank $p-1$. The 
sheaf $\calL^{\otimes p}$ is equal to $\bfF_*\calL$ and the section $da$ defines an injection 
$\calL^{\otimes p-1} \to \calB$. The curve $C$ admitting an invertible subsheaf $\calN$ such that 
$\calN^{\otimes p} \cong \omega_{C/\Bbbk}$ is called a \emph{Tango curve} and one says that the sheaf 
$\calN$ defines a \emph{Tango structure} on $C$ \cite{Takeda}. Thus 
we see that $\calL^{\otimes p-1}$ defines a Tango structure on $C$. The surface $X\to C$ is an example of a 
\emph{Raynaud surface}. These surfaces provide counter-examples in characteristic $p > 0$ to the 
Kodaira Vanishing Theorem 
\cite{Raynaud1} as well as 
examples of surfaces of general type with of non-zero regular vector fields \cite{Lang1}. 
\end{example}
 
Recall that a smooth projective surface $X$ over $\Bbbk$ is called \emph{supersingular} in sense of Shioda if 
its second Betti number $b_2(X)$ computed in \'etale topology 
coincides with the Picard number $\rho(X)$, the 
rank of the N'eron-Severi group $\NS(X)$. It is known that a Shioda-supersingular surface is 
\emph{supersingular} in sense of Artin that means that the formal completion of its Brauer group $\Br(X)$ is 
isomorphic to the formal completion of the vector group $\bbG_a^{r}$. The number $r$ is equal to 
$\dim_\Bbbk H^2(X,\calO_X) = p_g(X)$. The converse is true for Artin-supersingular K3 surfaces 
(which conjecturally are all unirational)  but is not known for surfaces   of general type.

Since our surface $X$ is uniruled, it is Shioda-supersingular \cite[Theorem 7.3]{Liedtke}.
 
\begin{lemma} The torsion group $\Tors(\NS(X))$ of the N\'eron-Severi lattice of 
$X$ is a finite $p$-group. It is trivial if $g = 1$ and $\chi(X,\calO_X) > 0$.
\end{lemma}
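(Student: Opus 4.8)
The plan is to exploit the exact sequence \eqref{exseqU}, namely $0\to \bfU^\circ\to \bfV\overset{\mu}{\to}\bbV(\calL^{\otimes -p^n})\to 0$, together with the uniruledness of $X$, which by \cite[Theorem 7.3]{Liedtke} makes $X$ Shioda-supersingular. First I would recall the standard exact sequence relating $\NS$ and the Brauer group: since $X$ is supersingular in Shioda's sense, $b_2(X)=\rho(X)$, so the $\ell$-adic cohomology groups $H^2_{\et}(X,\bbZ_\ell(1))$ are pure of weight zero, and $\Tors(\NS(X))$ is a finite group. More precisely, the Kummer sequence in flat (or \'etale) topology gives that the prime-to-$p$ torsion in $\NS(X)$ injects into $H^2_{\et}(X,\mu_\ell)$, and for a supersingular (hence uniruled, hence simply connected in the relevant sense — $b_1=0$) surface this prime-to-$p$ part vanishes. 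Thus $\Tors(\NS(X))$ is a finite $p$-group; this is the first paragraph of the argument and is essentially a citation of known facts on supersingular surfaces.

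For the second assertion I would localize the question: a torsion class in $\NS(X)$ of order a power of $p$ corresponds, via the $\delta_p$-sequence (the Artin–Schreier / flat Kummer sequence for $\mu_p$ or $\alpha_p$), to a nonzero element of $H^1(X,\calO_X)$ or of a related $p$-torsion cohomology group. When $g=1$, $X$ is a quasi-elliptic surface, and I would use the Leray spectral sequence \eqref{leray1} together with the identification $\Lie(\bfJ)\cong R^1f_*\calO_X$ and the inclusion $\Lie(\bfU)\hookrightarrow R^1f_*\calO_X$ from \eqref{lieinclusion}. The hypothesis $\chi(X,\calO_X)>0$ forces, via \eqref{leray3}, that $\deg R^1f_*\calO_X<0$, and combined with $f_*\calO_X=\calO_C$ and $b_1(X)=2g(C)$ this pins down $h^1(X,\calO_X)=h^1(C,\calO_C)=g(C)$; the map $H^1(C,\calO_C)\to H^1(X,\calO_X)$ in \eqref{leray1} is then an isomorphism (its cokernel $H^0(C,R^1f_*\calO_X)$ vanishes because a sheaf of negative degree has no sections once one knows it is a sum of line bundles of negative degree — or, as in the proof of Proposition \ref{vanishing}, by Poincar\'e reducibility). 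Hence $\bfpic^0_{X/\Bbbk}$ is an abelian variety pulled back from $C$, and there is no room for extra $p$-torsion in $\NS(X)$: any such class would have to come from the non-reduced part of $\bfpic_{X/\Bbbk}$ or from a nontrivial $\alpha_p$- or $\mu_p$-torsor, both of which are excluded once $H^1(X,\calO_X)$ is accounted for entirely by $C$ and $\chi(X,\calO_X)>0$ rules out $p_g$-related pathologies.

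The main obstacle, I expect, is the second statement rather than the first: one must carefully rule out $p$-torsion in $\NS(X)$ coming from inseparable phenomena, i.e. from the failure of $\bfpic_{X/\Bbbk}$ to be reduced or from $\alpha_p$-torsors. The clean way is to invoke the exact sequence $0\to\Pic(X)/p\to H^2_{\fl}(X,\mu_p)\to {}_p\Br(X)\to 0$ together with the fact that for a supersingular surface the formal Brauer group is $\widehat{\bbG_a}^{p_g}$ and, when $\chi(X,\calO_X)>0$ (so that the surface is "as non-special as possible" in the quasi-elliptic range — in fact for quasi-hyperelliptic surfaces $\chi=0$, so $\chi>0$ excludes precisely those, and by the Bombieri–Mumford classification \cite{BM} the remaining quasi-elliptic surfaces have $\Pic$ reduced or at least torsion-free $\NS$), the torsion of $\NS$ must vanish. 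So the key step is a dévissage: reduce to showing that $H^1(X,\calO_X)$ is fully accounted for by $C$, which is exactly what $\chi(X,\calO_X)>0$ plus \eqref{leray3} give, and then cite the structure theory of \cite{BM} to conclude triviality of $\Tors(\NS(X))$.
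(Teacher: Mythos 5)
Both halves of your argument have gaps, and neither matches the mechanism the paper actually uses. For the first assertion, your route through ``$b_1(X)=0$'' fails: the base $C$ is an arbitrary smooth complete curve, so $b_1(X)=2g(C)$ can be positive, and Shioda-supersingularity ($b_2=\rho$) by itself says nothing about torsion in $\NS(X)$ (note also that the implication runs the other way: uniruled implies Shioda-supersingular, not conversely). The paper instead argues fiberwise: a class of order $n$ prime to $p$ corresponds to a $\bmu_{n}$-torsor whose restriction to every closed fiber --- each homeomorphic to $\bbP^1$ --- is trivial, so the class is trivial on $X_K$ and hence is supported on fibers; since $f$ has a section no fiber is multiple, so the class is an integral combination of whole fibers, i.e.\ the pull-back of a degree-zero divisor class on $C$, hence lies in $\Pic^0(X)$ and dies in $\NS(X)$.

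For the second assertion your argument is not a proof: knowing that $H^1(C,\calO_C)\to H^1(X,\calO_X)$ is an isomorphism controls $\Pic^0$ but says nothing about $\Tors(\NS(X))=\Pic^\tau(X)/\Pic^0(X)$. A classical Enriques surface in characteristic $2$ has $\chi(X,\calO_X)=1$, $H^1(X,\calO_X)=0$, and yet $2$-torsion in $\NS(X)$, so ``there is no room for extra $p$-torsion'' is not a valid inference; and the appeal to the Bombieri--Mumford classification is both unjustified and misdirected, since \cite{BM} treats the $\chi=0$ quasi-hyperelliptic surfaces that your hypothesis excludes. The paper's actual proof is a short direct computation that genuinely uses the section and $g=1$: if $\calL\cong\calO_X(D)$ is a nontrivial torsion class, then Riemann--Roch gives $\chi(\calL)=\chi(\calO_X)>0$, and since $h^0(\calL)=0$ Serre duality produces an effective divisor $D'\sim K_X-D$; because $K_X$ and $D$ have degree zero on every irreducible component of every fiber, $D'$ is vertical and hence $D'=f^*(d')$; the canonical bundle formula for the relatively minimal genus-one fibration gives $K_X=f^*(k)$, whence $D=f^*(k-d')$ with $k-d'$ of degree $0$ on $C$, so $D\in f^*\Jac(C)\subset\Pic^0(X)$ and $D$ vanishes in $\NS(X)$. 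You would need to supply an argument of this kind to close the gap.
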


\begin{proof} For any commutative 
 finite flat group scheme $G$ there is a  natural isomorphism
 $H_{\text{fl}}^1(X,G) \cong \Hom(G^D,\bfpic_{X/\Bbbk}),$
 where $H_{\text{fl}}$ stands for the flat cohomology and $G^D$ is the Cartier dual group scheme 
 \cite[Proposition 6.2.1]{Raynaud}. 
 A non-trivial invertible sheaf $\calL$ of finite order $n$ in $\Pic(X)$ defines an embedding of 
 the constant group scheme $(\bbZ/n\bbZ)_X$ into  $\bfpic_{X/C}$. It corresponds to an 
 element $\alpha$ in 
 $H_{\fl}^1(X,\bmu_{n,X})$, where $\bmu_{n,X}$ is the kernel of $x\mapsto x^n$ for the multiplicative group 
 scheme $\bbG_{m,X}$. In its turn, the cohomology class $\alpha$ defines an isomorphism class of 
 $\bmu_{n,X}$-torsor $Y_\alpha$ 
 on $X$ in the flat topology. Since each irreducible fiber $X_t$ of $f$ is homeomorphic to $\bbP^1$, 
 the \'etale cohomology of $\mu_{n,X_t}$ are trivial for $(n,p) = 1$. Thus the restriction of 
 $Y_\alpha$ to an open subset of $X$ containing the generic fiber $X_K$ is trivial. Hence the restriction 
 of $\calL$ to $X_K$ is trivial and therefore $\calL\cong \calO_X(D)$ for some divisor $D$ contained in 
 fibers of $f$. It is known that  a divisor class that intersects with zero  each irreducible 
 component of a closed fiber is linearly equivalent to a rational linear 
 combination of the divisor classes $X_t$. Since $f$ has a section,  no fiber is multiple, and hence 
 $D$ is an integer linear combination of the classes of fibers, hence comes as the pull-back of
 a divisor class in $\Pic^0(C)$. Such a class belongs to $\Pic^0(X)$. 
 
 To prove the  second assertion we use Riemann-Roch Theorem and Serre's Duality Theorem on $X$ 
 that gives that 
 $\omega_X\otimes \calL^{\otimes -1}$ has a section defined by an effective divisor 
 $D'\sim K_X-D$, where $\calL\cong \calO_X(D).$ The restriction of 
 $K_X$ and $D$ to each irreducible component of a fiber is of degree $0$, hence 
$D'$ is a linear combination of fibers and therefore $D'= f^*(d')$ for some effective divisor class on $C$. Since 
$K_J = f^*(k)$ for some divisor class on $C$, we obtain that $D = f^*(k-d)$ where $k-d$ is a torsion divisor 
class on $C$. Hence $D\in f^*\Jac(C)$ and thus it is algebraically equivalent to zero.
\end{proof}

 \begin{theorem}\label{brauer} Let $f:X\to C$ be a regular relatively minimal model of a regular 
 compactification $\bar{\sfU}$ of a unipotent group of genus $g > 0$ and $\bfJ$ be the N\'eron model of 
 $\Jac(X_K)$. Then there is an isomorphism of abelian groups 
 $$H_{\et}^1(C,\bfJ) \cong \Bbbk^{p_g(X)}\oplus \Tors(\NS(X)),$$
 where $p_g(X) = \dim_{\Bbbk}H^2(X,\calO_X)$.
  \end{theorem}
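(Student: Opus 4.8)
The plan is to identify $H_{\et}^1(C,\bfJ)$ with the Brauer group $\Br(X)=H^2_{\et}(X,\bbG_m)$ of the surface $X$, and then to read off the stated answer from the fact that $X$, being uniruled, is supersingular in the sense of Shioda. Throughout I would work in the flat topology, which for the smooth sheaf $\bbG_m$ and for the smooth group scheme $\bfJ$ coincides with the \'etale one.

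\emph{Step 1: $\Br(X)\cong H^1(C,\calP_{X/C})$.} I would run the Leray spectral sequence $H^p(C,R^qf_{*}\bbG_m)\Rightarrow H^{p+q}(X,\bbG_m)$. Here $f_{*}\bbG_m=\bbG_{m,C}$ because $f$ is proper with geometrically integral generic fibre, $R^1f_{*}\bbG_m=\calP_{X/C}$ by definition of the relative Picard sheaf, and $R^2f_{*}\bbG_m=0$: its stalks are the Brauer groups of $X$ pulled back to the strict henselisations of the local rings of $C$, which vanish since one obtains a regular two-dimensional scheme proper over a strictly henselian trait with algebraically closed residue field (Tsen on the fibres together with Grothendieck's vanishing results for such schemes). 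As $C$ is a smooth proper curve over the algebraically closed field $\Bbbk$, one has $H^i(C,\bbG_m)=0$ for $i\ge 2$ (in particular $\Br(C)=0$), so the spectral sequence collapses to a canonical isomorphism $\Br(X)\cong H^1(C,\calP_{X/C})$.

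\emph{Step 2: $H^1(C,\calP_{X/C})\cong H^1(C,\bfJ)$.} Since $X_K=\bar{\sfU}_K$ carries the $K$-rational point given by the identity of $\sfU_K$, the total-degree homomorphism $\calP_{X/C}\to(\bbZ)_C$ is an epimorphism which is surjective on global sections (realized by the class of the section $\sfO$), while $H^1(C,(\bbZ)_C)=\Hom_{\mathrm{cont}}(\pi_1^{\et}(C),\bbZ)=0$; hence the kernel $\calP_{X/C}'$ (total degree $0$) satisfies $H^1(C,\calP_{X/C}')\cong H^1(C,\calP_{X/C})$. By Raynaud's description of the N\'eron model quoted above, $\bfJ=\calP_{X/C}'/\calE$, where $\calE$ is the schematic closure of the unit section; $\calE$ is supported at the finitely many points where $f$ fails to be cohomologically flat and has trivial higher cohomology (checked using that $\Bbbk$ is algebraically closed, so finite flat $\Bbbk$-group schemes carry no positive-degree flat cohomology), whence $H^1(C,\calP_{X/C}')\cong H^1(C,\bfJ)$. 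Combining with Step 1, $H_{\et}^1(C,\bfJ)\cong\Br(X)$.

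\emph{Step 3: computing $\Br(X)$, and the main obstacle.} As $X$ is uniruled it is supersingular in the sense of Shioda, so $b_2(X)=\rho(X)$. By Grothendieck's analysis of the Brauer group of a surface, the prime-to-$p$ part of $\Br(X)$ is $(\bbQ_\ell/\bbZ_\ell)^{b_2-\rho}$ together with the prime-to-$p$ part of $\Tors(\NS(X))$; both vanish --- the first by supersingularity, the second because $\Tors(\NS(X))$ is a $p$-group by the preceding Lemma. For the $p$-primary part, Shioda-supersingularity forces Artin-supersingularity, so the Artin--Mazur formal Brauer group is $\widehat{\bbG}_a^{\,p_g(X)}$, and the flat/crystalline computation of $\Br(X)[p^\infty]$ for such a surface (Artin, Illusie) then gives $\Br(X)[p^\infty]\cong\Bbbk^{\,p_g(X)}\oplus\Tors(\NS(X))$. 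Hence $\Br(X)\cong\Bbbk^{\,p_g(X)}\oplus\Tors(\NS(X))$, and with Step 2 this proves the theorem. I expect the hard part to be precisely this last extraction of the ``continuous'' summand $\Bbbk^{\,p_g(X)}$ from $\Br(X)[p^\infty]$: it rests on the fine structure of flat and crystalline cohomology of supersingular surfaces rather than on anything established earlier in the paper, and the vanishing $R^2f_{*}\bbG_m=0$ invoked in Step 1 also deserves care.
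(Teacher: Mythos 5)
Your proposal is correct and follows essentially the same route as the paper: the paper identifies $H_{\et}^1(C,\bfJ)$ with $\Br(X)=H_{\et}^2(X,\bbG_m)$ by citing Grothendieck (your Steps 1--2 are exactly the Leray/Raynaud argument behind that citation), then invokes unirationality to get $b_2(X)=\rho(X)$ and quotes the known structure of the Brauer group of a supersingular surface from \cite[0.10]{CDL} (your Step 3). You have merely unpacked the two citations; no gap.
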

 
 \begin{proof} It is known that 
 $H^1(C,\bfJ)$ is isomorphic to the Brauer group $\Br(X)$ of $X$ that coincides with the cohomological Brauer group 
 $H_{\et}^2(X,\bbG_m)$ \cite{Grothendieck}. Since the general fiber of $f:X\to C$ is geometrically 
 rational curve, the surface $X$ is a unirational surface, hence its second Betti number 
 computed in $\ell$-adic cohomology coincides with the rank of the Picard group. 
 The computation of the Brauer group of a smooth 
 surface gives an isomorphism of abelian groups 
 $$\Br(X) \cong \Bbbk^{p_g(X)}\oplus \Tors(\NS(X))$$
 (see \cite[0.10]{CDL}). 

 \end{proof}

We use the notations from the proof of Theorem \ref{weisfeiler}. We denote by 
$\mu$ the restriction of the homomorphism of group $\bfV\to \bbV(\calL^{\otimes -p^n})$ to the subgroup 
$\bbV(\calL^{\otimes -p^n}).$ The homomorphism $\mu$ is surjective in \'etale topology. Let 
$$G = \Ker(\mu).$$

Now everything is ready to prove our main theorem. 

\begin{theorem}\label{shafarevich} Let $\bfU$ be the N\'eron model of a 
unipotent group $\sfU_K$ of genus $g > 0$. Assume $H^0(C,\Lie(\bfU)) = \{0\}$.
  Then
\begin{itemize}
\item[(i)] $\bfU^\circ(C) = H_{\et}^0(C,\bfU^\circ)\cong H_{\et}^0(C,G) = G(C)$.\\
\item[(ii)] There is an exact sequence 
$$0\to H_{\et}^1(C,G) \to H_{\et}^1(C,\bfU^\circ) \to H^1(C,\calL) \to 0.$$\\
\item[(iii)] There is an exact sequence
$$0\to \pi_0(\bfU)/\bfU(C) \to H_{\et}^1(C,\bfU^\circ)\to H_{\et}^1(C,\bfU) \to 0.$$\\
\item[(iii)] $H_{\et}^2(C,\bfU) \cong H_{\et}^2(C,G)$.
\end{itemize}
\end{theorem}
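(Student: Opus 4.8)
\section*{Proof proposal}

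The plan is to present $\bfU^\circ$ as an extension of a vector group scheme by the finite \'etale group scheme $G$, to deduce (i), (ii) and (iv) from the long exact cohomology sequence of that extension once $H_{\et}^2(C,G)=0$ is known, and to deduce (iii) from the skyscraper exact sequence $0\to\bfU^\circ\to\bfU\to\pi_0(\bfU)\to 0$.

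First I would record the short exact sequence of abelian sheaves on $C_{\et}$
\begin{equation}\label{starseq}
0\to G\to \bfU^\circ\overset{\pi}{\to}\bbV(\calL^{\otimes -1})\to 0,
\end{equation}
with $\calL=\Lie(\bfU)$ and $\pi$ the homomorphism induced, in the notation of the proof of Theorem~\ref{weisfeiler}, by the projection $(u_i,v_i)\mapsto u_i$ on the local Russell models. Let $\tau\colon\bbV(\calL^{\otimes -p^n})\to\bbV(\calL^{\otimes -p^n})$ be the restriction of the map $\mu$ of \eqref{exseqU} to the subgroup $\bbV(\calL^{\otimes -p^n})\subset\bfV$ of \eqref{ext1}; locally it is $v\mapsto v+a_1^{(i)}v^{p}+\cdots+a_m^{(i)}v^{p^{m}}$, a separable isogeny with finite \'etale kernel $G=\ker\tau$. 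Since $\tau$ is surjective, the restriction of $\mu$ to the subgroup $\bbV(\calL^{\otimes -p^n})$ has the same image as $\mu$; as $\ker\mu=\bfU^\circ$, this forces $\bfU^\circ+\bbV(\calL^{\otimes -p^n})=\bfV$, while obviously $\bfU^\circ\cap\bbV(\calL^{\otimes -p^n})=G$, and combining this with \eqref{ext1} gives \eqref{starseq}. (Equivalently, \eqref{starseq} is the top row produced by the nine lemma from the $3\times3$ diagram whose columns are the $\tau$-sequence $0\to G\to\bbV(\calL^{\otimes -p^n})\overset{\tau}{\to}\bbV(\calL^{\otimes -p^n})\to 0$, the sequence \eqref{exseqU}, and $0\to\bbV(\calL^{\otimes -1})\overset{\id}{\to}\bbV(\calL^{\otimes -1})\to 0$, and whose lower two rows are \eqref{ext1} and $0\to\bbV(\calL^{\otimes -p^n})\overset{\id}{\to}\bbV(\calL^{\otimes -p^n})\to 0$.)

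The one step with real content is the vanishing $H_{\et}^2(C,G)=0$. Because $C$ is a curve, $H^2(C,\calL^{\otimes p^n})=0$, so the long exact sequence of the $\tau$-sequence, together with $H_{\et}^i(C,\bbV(\calL^{\otimes -p^n}))\cong H^i(C,\calL^{\otimes p^n})$, gives $H_{\et}^2(C,G)\cong\Coker\!\big(\tau_*\colon H^1(C,\calL^{\otimes p^n})\to H^1(C,\calL^{\otimes p^n})\big)$ and exhibits $\ker(\tau_*|_{H^1})$ as a quotient of $H_{\et}^1(C,G)$, hence finite since $G$ is finite \'etale over the proper curve $C$. Now $H^1$ is the top cohomology of a coherent sheaf on the curve $C$, hence commutes with arbitrary base change, so $A\mapsto H^1(C,\calL^{\otimes p^n})\otimes_\Bbbk A$ is the vector group $\bbG_{a,\Bbbk}^{r}$ with $r=\dim_\Bbbk H^1(C,\calL^{\otimes p^n})$, on which $\tau$ induces a homomorphism of algebraic groups over $\Bbbk$ with finite kernel. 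A homomorphism $\bbG_{a,\Bbbk}^{r}\to\bbG_{a,\Bbbk}^{r}$ with finite kernel is surjective (it is a dominant homomorphism of connected algebraic groups of equal dimension), so $\tau_*|_{H^1}$ is surjective and $H_{\et}^2(C,G)=0$.

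Granting this, the rest is a diagram chase. Taking \'etale cohomology of \eqref{starseq} and using $H_{\et}^i(C,\bbV(\calL^{\otimes -1}))\cong H^i(C,\calL)$, the hypothesis $H^0(C,\calL)=H^0(C,\Lie(\bfU))=0$, $H^2(C,\calL)=0$ and $H_{\et}^2(C,G)=0$, one gets an isomorphism $\bfU^\circ(C)\cong H_{\et}^0(C,G)$, the exact sequence $0\to H_{\et}^1(C,G)\to H_{\et}^1(C,\bfU^\circ)\to H^1(C,\calL)\to 0$, and $H_{\et}^2(C,\bfU^\circ)=0$; these are (i) and (ii). Finally, $\pi_0(\bfU)=\bfU/\bfU^\circ$ is a skyscraper sheaf supported at the finitely many closed points with reducible fibre, so $H_{\et}^i(C,\pi_0(\bfU))=0$ for $i\ge 1$, and the long exact sequence of $0\to\bfU^\circ\to\bfU\to\pi_0(\bfU)\to 0$ yields the exact sequence $0\to\pi_0(\bfU)/\bfU(C)\to H_{\et}^1(C,\bfU^\circ)\to H_{\et}^1(C,\bfU)\to 0$ of (iii) together with an isomorphism $H_{\et}^2(C,\bfU)\cong H_{\et}^2(C,\bfU^\circ)$; since $H_{\et}^2(C,\bfU^\circ)=0=H_{\et}^2(C,G)$, this is (iv). I expect the vanishing $H_{\et}^2(C,G)=0$ to be the only genuine obstacle; the remaining bookkeeping relies only on the standard identification of the \'etale cohomology of a vector group scheme with coherent cohomology.
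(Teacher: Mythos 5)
Your proof is correct and has the same skeleton as the paper's: it uses the presentation of $\bfU^\circ$ from the proof of Theorem \ref{weisfeiler}, with $G=\ker\bigl(\tau\bigr)$ for $\tau=\mu|_{\bbV(\calL^{\otimes -p^n})}$, takes \'etale cohomology, and then handles $\pi_0(\bfU)$ by the skyscraper sequence. The genuine difference is that you isolate and actually prove the one step the paper leaves implicit. In the paper's argument, the surjectivity of $H_{\et}^1(C,\bfU^\circ)\to H^1(C,\calL)$ in (ii), and hence the identification $H_{\et}^2(C,\bfU^\circ)\cong H_{\et}^2(C,G)$ rather than merely a quotient of $H_{\et}^2(C,G)$, both hinge on the surjectivity of $\tau_*$ on $H^1(C,\calL^{\otimes p^n})$ (equivalently, the vanishing of the connecting map $H^1(C,\calL)\to H_{\et}^2(C,G)$); the paper only establishes this under the extra hypotheses of Corollary \ref{katz2}, via Lemma \ref{katz}. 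Your observation that $\tau_*$ on $H^1$ is a $p$-polynomial endomorphism $\id+\sum_i a_iF^i$ of $\bbG_{a,\Bbbk}^{r}$ with finite kernel, hence surjective, is correct and closes this gap in full generality; indeed, since each semilinear term has vanishing differential, the Jacobian of $\tau_*$ is the identity, so $\tau_*$ is an \'etale isogeny of $\bbG_{a,\Bbbk}^{r}$ and is surjective over the algebraically closed field $\Bbbk$ — this gives finiteness of the kernel and surjectivity simultaneously, without invoking finiteness of $H_{\et}^1(C,G)$. As a consequence you prove more than the theorem states, namely $H_{\et}^2(C,G)=H_{\et}^2(C,\bfU)=0$ unconditionally. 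Your explicit derivation of the sequence $0\to G\to\bfU^\circ\to\bbV(\calL^{\otimes -1})\to 0$ from \eqref{ext1} and \eqref{exseqU} is also a cleaner packaging: it gives the injectivity of $H_{\et}^1(C,G)\to H_{\et}^1(C,\bfU^\circ)$ directly from $H^0(C,\calL)=0$, whereas the route through \eqref{mainexseq} needs the surjectivity of $\tau_*$ already on $H^0$. One small inaccuracy: $G$ is quasi-finite, separated and \'etale over $C$, but in general not finite, since the top coefficient $a_m$ may vanish at points of $C$; the finiteness of $H_{\et}^1(C,G)$ still holds because $G$ represents a constructible sheaf, but as noted above you do not actually need it.
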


\begin{proof} By assumption,  $H^0(C,\calL) = 0$, thus $H_{\et}(C,\bfV) \cong H^1(C,\calL^{\otimes p^n})$, hence 
$\Ker(H_{\et}^0(C,\bfV) \overset{H^0(\mu)}{\longrightarrow} H^0(C,\calL^{\otimes p^n}))$ 
coincides with  $H_{\et}^0(C,G)$. 
Now the first assertion  follows immediately 
from exact sequence \eqref{exseqU}.  
  Taking cohomology in \eqref{exseqU}, we get an exact sequence
\beq\label{mainexseq}
0\to   
H_{\et}^1(C,\bfU^\circ)\to  H_{\et}^1(C,\bfV) \to   
H^1(C,\calL^{\otimes p^n})\to H_{\et}^2(C,\bfU^\circ) \to 0.
\eeq

Let us look at the homomorphism   $H_{\et}^1(C,\bfV) \to   
H^1(C,\calL^{\otimes p^n})$. The group $G$ is the subgroup of 
$\bbV(\calL^{\otimes -p^n})$ equal to the kernel of the projection 
$\bfU^\circ\to \bbV(\calL)$. The image of $H_{\et}^1(C,\bfU^\circ)$ in  $ H_{\et}^1(C,\bfV)$ is equal to an extension 
of $H^1(C,\calL)$ with kernel $H_{\et}^1(C,G)$. This is the kerne of $ H_{\et}^1(C,\bfV) \to   
H^1(C,\calL^{\otimes p^n})$. Thus we obtain an exact sequence
\beq\label{exseqmain1}
0\to H_{\et}^1(C,G) \to H_{\et}^1(C,\bfU^\circ) \to H^1(C,\calL) \to 0
\eeq  
and an isomorphism
$$H_{\et}^2(C,\bfU^\circ) \cong H_{\et}^2(C,G).$$
Next we relate $\bfU^\circ$ and $\bfU$ by means of 
the exact sequence coming from the definition of $\pi_0(\bfU)$
$$0\to \bfU^\circ\to \bfU\to \pi_0(\bfU) \to 0.$$
Here $\pi_0(\bfU) = \bfU/\bfU^\circ$ is supported at a finite 
set of points in $C$ and so it 
can be identifies with 
its group of sections. 
Applying cohomology, we find
$$H_{\et}^2(C,\bfU) = H_{\et}^2(C,\bfU^\circ) \cong H_{\et}^2(C,G),$$
as asserted. We also get an exact sequence
\beq\label{exseqH1}
0\to \pi_0(\bfU)/\bfU(C) \to H_{\et}^1(C,\bfU^\circ) \to H_{\et}^1(C,\bfU)\to 0.
\eeq
\end{proof}

\begin{remark} Suppose $H^0(C,\calL) \ne \{0\}$. This implies that 
$\{0\} \ne H^0(C,\calL^{\otimes p^n}))\subset H_{\et}^0(C,\bfV)$ but the kernel $\bfU^\circ(C)$ of the map 
$\alpha:H_{\et}^0(C,\bfV)\to H^0(C,\calL^{\otimes p^n})$ must be a finite group.  This happens, for example, if 
 $f:X\to C$ is a quasi-hyperelliptic surface and $\calL\cong \calO_C$. In this case 
 $$H_{\et}^0(C,\bfV) = H^i(C,\calL) = H^i(C,\calL^{\otimes p^n}) = \bbG_a(\Bbbk),\  i =0,1,$$ 
  and $\bfU(C)$ is a non-trivial finite group isomorphic to 
 $\Tors(\NS(X))$. We know that $\deg \calL > 0$ is possible, for example, for 
 Raynaud surfaces from Example \ref{raynaud}. \end{remark}

The following Lemma is taken from \cite[III, \S4, Lemma 4.13]{Milne} where one can also 
 find references to its proof.
    
 \begin{lemma}\label{katz} Let $V$ be a finite-dimensional linear space of dimension $d$ over $\Bbbk$ and 
 $\phi:V\to V$ be a $p^k$-linear map (i.e. $\phi(\lambda x) = \lambda^{p^k}\phi(x)$ for any $\lambda\in \Bbbk$).
Let $V = V_{\text{ss}}\oplus V_{\text{nil}}$, where $\phi$ is bijective on $V_{\text{ss}}$ and $\phi$ is nilpotent 
 on $V_{\text{nil}}$. Then $\phi-\id$ is surjective on $V$ and  
 the kernel of $\phi-\id$  is a vector space  over 
 $\bbF_{p^k}$ of dimension equal to $\dim V_{\text{ss}}$. 
 \end{lemma}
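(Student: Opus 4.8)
The plan is to split $V$ along the given decomposition and handle the two summands separately; write $q=p^k$ throughout. The decomposition $V=V_{\text{ss}}\oplus V_{\text{nil}}$ is just the Fitting decomposition for the $q$-linear operator $\phi$ (this makes sense because $\Bbbk$ is perfect, so $\phi$ carries $\Bbbk$-subspaces to $\Bbbk$-subspaces), and in particular it is stable under $\phi$, hence under the additive map $\phi-\id$. So it suffices to prove the assertion for $\phi|_{V_{\text{nil}}}$ and for $\phi|_{V_{\text{ss}}}$ separately.

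On $V_{\text{nil}}$, pick $N$ with $\phi^N=0$. Then $-(\id+\phi+\cdots+\phi^{N-1})$ is, by a telescoping computation using $\phi^N=0$, a two-sided additive inverse of $\phi-\id$; thus $\phi-\id$ is a bijection of $V_{\text{nil}}$, in particular surjective with trivial kernel. This contributes nothing to $\Ker(\phi-\id)$, which is consistent with the claim, since $V_{\text{ss}}$ plays no role here.

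The content is the bijective piece $W:=V_{\text{ss}}$; set $m=\dim_\Bbbk W$. Fix a $\Bbbk$-basis of $W$ and write $\phi(x)=Ax^{(q)}$ with $x^{(q)}=(x_1^q,\dots,x_m^q)$ and $A\in\GL_m(\Bbbk)$. For any $c\in W$, the equation $\phi(x)-x=c$ is a system of $m$ polynomials of degree $q$ in $x_1,\dots,x_m$ whose degree-$q$ leading forms are the components of $Ax^{(q)}$; since $A$ is invertible these vanish simultaneously only at the origin, so the system defines a zero-dimensional subscheme of $\bbA_\Bbbk^m$ of length $q^m$ by B\'ezout. Its Jacobian matrix is identically $-I$ (every partial of $x_j^q$ vanishes in characteristic $p$), so that subscheme is reduced, hence — as $\Bbbk$ is algebraically closed — consists of exactly $q^m$ distinct $\Bbbk$-points. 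Allowing $c$ to be arbitrary gives surjectivity of $\phi-\id$ on $W$; taking $c=0$ shows that $\Ker(\phi-\id|_W)$ has exactly $q^m$ elements. This kernel is an $\bbF_q$-subspace of $W$: it is an additive subgroup, and if $\phi(w)=w$ and $\lambda\in\bbF_q$ then $\phi(\lambda w)=\lambda^q\phi(w)=\lambda w$ because $\lambda^q=\lambda$; hence it has $\bbF_q$-dimension $m=\dim_\Bbbk V_{\text{ss}}$. Reassembling the two summands, $\phi-\id$ is surjective on all of $V$ and $\Ker(\phi-\id)$ is this $\bbF_q$-vector space of dimension $\dim_\Bbbk V_{\text{ss}}$.

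The only step with real content is the one on $V_{\text{ss}}$ — equivalently, the fact that a bijective $q$-linear operator on a finite-dimensional vector space over an algebraically (or merely separably) closed field admits a $\Bbbk$-basis of fixed vectors, a form of Lang's theorem (the additive Hilbert~90) for $\bbG_{a}$ — and this is precisely where algebraic closedness of $\Bbbk$ enters; everything else is formal. One may replace the B\'ezout/Jacobian argument by the observation that $x\mapsto\phi(x)-x$ is an \'etale homomorphism $\bbG_{a,\Bbbk}^m\to\bbG_{a,\Bbbk}^m$ with finite kernel, hence surjective with kernel of order equal to its degree $q^m$.
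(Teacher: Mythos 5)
Your proof is correct. Note, though, that the paper does not actually prove this lemma: it quotes it verbatim from Milne's \emph{\'Etale cohomology} (III, \S 4, Lemma 4.13) and points the reader to the references given there, so there is no in-paper argument to compare against. What you supply is a self-contained version of the standard proof, and the two halves are both sound: on $V_{\text{nil}}$ the geometric-series inverse $-(\id+\phi+\cdots+\phi^{N-1})$ of $\phi-\id$ is legitimate because $\phi$ is additive (it need not be $\Bbbk$-linear, and your inverse isn't either, but bijectivity of the additive map is all that is needed); on $V_{\text{ss}}$ the B\'ezout-plus-Jacobian count is a correct implementation of Lang's theorem for the \'etale isogeny $x\mapsto Ax^{(q)}-x$ of $\bbG_{a,\Bbbk}^m$ --- the leading forms $Ax^{(q)}$ have no common projective zero since $A$ is invertible, the scheme $\phi(x)-x=c$ is therefore finite of length $q^m$, and the identically-unipotent Jacobian $-I$ makes it reduced, whence exactly $q^m$ rational points over the algebraically closed $\Bbbk$. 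The identification of the kernel as an $\bbF_{q}$-subspace via $\lambda^q=\lambda$ is also right. The one hypothesis you use beyond the literal statement is that $\Bbbk$ is algebraically closed (or at least separably closed); this is indeed the standing assumption in the paper and is genuinely needed for surjectivity, so no gap there. In short: correct, and more informative than the paper's citation.
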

 
 Note that the dimension of $V_{\textrm{ss}}$ can be computed as follows. Choose a 
$\Bbbk$-basis 
$\underline{e}$ of $V$ and let $\phi(\underline{e}) = A\underline{e}$ for some matrix 
$A$. Then 
\beq\label{hassewitt}
\dim_\Bbbk V_{\textrm{ss}} = \text{rank}(A\cdot A^{(p)}\cdot \cdots\cdot A^{(p^{d-1})}),
\eeq
where $A^{(p^k)}$ denotes raising the entries of $A$ in $p^k$-th power and $d = \dim_\Bbbk V$. The 
number $\dim_\Bbbk V_{\textrm{ss}}$ is called the \emph{stable rank} of the matrix $A$.
 
\begin{corollary}\label{katz2} Let $u^{p^n}+v+a_1v^p+\cdots+a_mv^{p^m} = 0$ be the equation 
of the N\'eron model
$\bfU$ of a unipotent group $\sfU_K$ of genus $g > 0$. Suppose that $a_1 =\cdots = a_{m-1} = 0$. Let 
$$\alpha:\calL^{\otimes p^n} \to \calL^{\otimes p^n}$$ be the map given by 
$v\mapsto v+av^{p^m}$ and 
$$H^1(\alpha):H^1(C,\calL^{\otimes p^n}) \to H^1(C,\calL^{\otimes p^n})$$
be the corresponding map on cohomology. Then $H^1(\alpha)$ is surjective and its kernel is 
a vector space over $\bbF_{p^m}$ of dimension equal to the stable rank $r$  of the $p^m$-linear map 
 $\phi = H^1(\alpha)-\id$.

In particular, 
$$H^1(C,G) \cong (\bbZ/p^m\bbZ)^r, \quad H_{\et}^2(C,\bfU) = H^2(C,G) = 0.$$
\end{corollary}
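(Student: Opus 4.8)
The plan is to identify $G$ with the kernel of a $p$-polynomial endomorphism of a vector group scheme, run the resulting long exact sequence of étale cohomology on the curve $C$, and then apply Lemma~\ref{katz}. First I would make $G$ explicit. Since $a_1=\cdots=a_{m-1}=0$, the homomorphism $\mu\colon\bbV(\calL^{\otimes -p^n})\to\bbV(\calL^{\otimes -p^n})$ with $G=\Ker\mu$ is fibrewise $v\mapsto v+av^{p^m}$; on cohomology it induces, via the identification $H^i_{\et}(C,\bbV(\calL^{\otimes -p^n}))\cong H^i(C,\calL^{\otimes p^n})$ used in the proof of Theorem~\ref{weisfeiler}, exactly the map $H^i(\alpha)$ of the statement. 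The differential of $\mu$ at the identity section is the identity of $\Lie\bbV(\calL^{\otimes -p^n})$, so $\mu$ is étale; on each fibre of $\bbV(\calL^{\otimes -p^n})\to C$ it is a surjection of smooth connected $1$-dimensional groups (a degree $p^m$ isogeny where $a\ne 0$, the identity where $a$ vanishes), so $\mu$ is surjective, hence an epimorphism of abelian sheaves on the small étale site of $C$. Thus
$$0\longrightarrow G\longrightarrow \bbV(\calL^{\otimes -p^n})\overset{\mu}{\longrightarrow}\bbV(\calL^{\otimes -p^n})\longrightarrow 0$$
is exact.

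Next I would take étale cohomology. Using $H^i_{\et}(C,\bbV(\calL^{\otimes -p^n}))\cong H^i(C,\calL^{\otimes p^n})$ and $H^i(C,\calL^{\otimes p^n})=0$ for $i\ge 2$ (since $C$ is a curve), the long exact sequence reads
$$0\to H^0(C,G)\to H^0(C,\calL^{\otimes p^n})\xrightarrow{H^0(\alpha)}H^0(C,\calL^{\otimes p^n})\to H^1(C,G)\to H^1(C,\calL^{\otimes p^n})\xrightarrow{H^1(\alpha)}H^1(C,\calL^{\otimes p^n})\to H^2(C,G)\to 0.$$
The decisive point, and the place where the hypothesis $a_1=\cdots=a_{m-1}=0$ is used, is that on each $H^i$ the map $\alpha$ is $\id+\psi$, where $\psi$ is the $p^m$-linear (i.e.\ $p^m$-semilinear over $\Bbbk$) endomorphism induced by $v\mapsto av^{p^m}$. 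Hence $\phi:=H^i(\alpha)-\id=\psi$ is $p^m$-linear and $H^i(\alpha)=-\bigl((-\phi)-\id\bigr)$. Applying Lemma~\ref{katz} with $p^k=p^m$ to the $p^m$-linear map $-\phi$ on the finite-dimensional $\Bbbk$-spaces $H^0(C,\calL^{\otimes p^n})$ and $H^1(C,\calL^{\otimes p^n})$ gives that $H^0(\alpha)$ and $H^1(\alpha)$ are surjective and that $\Ker H^1(\alpha)=\Ker\bigl((-\phi)-\id\bigr)$ is an $\bbF_{p^m}$-vector space of dimension $\dim V_{\mathrm{ss}}$, i.e.\ the stable rank $r$ of $\phi$ given by \eqref{hassewitt}.

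Finally I would read off the conclusions: surjectivity of $H^0(\alpha)$ annihilates its cokernel in the sequence above, whence $H^1(C,G)\cong\Ker H^1(\alpha)\cong(\bbZ/p^m\bbZ)^r$; surjectivity of $H^1(\alpha)$ gives $H^2(C,G)=0$; and then $H^2_{\et}(C,\bfU)\cong H^2_{\et}(C,G)=0$ by the last assertion of Theorem~\ref{shafarevich}. I do not expect a real obstacle here: the only points needing care are the right-exactness of $\mu$ (guaranteed by $d\mu_e=\id$, which makes $\mu$ an étale cover) and the observation that, precisely because the intermediate coefficients vanish, $H^i(\alpha)$ is the identity plus a single $p^m$-linear term, which is exactly the shape Lemma~\ref{katz} requires; were some $a_i$ with $1\le i<m$ nonzero, $H^i(\alpha)$ would be the identity plus a sum of $p^j$-linear maps for several $j$ and the lemma would not apply directly.
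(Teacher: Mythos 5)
Your proof is correct and follows exactly the route the paper intends (the corollary is stated there without proof): identify $G$ as the kernel of the \'etale isogeny $v\mapsto v+av^{p^m}$ of $\bbV(\calL^{\otimes -p^n})$, take the long exact sequence of \'etale cohomology, and apply Lemma~\ref{katz} to the $p^m$-linear part on both $H^0$ and $H^1$, the vanishing of the intermediate coefficients being precisely what makes the lemma applicable. The only caveat is notational: what you actually establish (correctly) is that $H^1(C,G)$ is an $r$-dimensional vector space over $\bbF_{p^m}$, i.e.\ $(\bbZ/p\bbZ)^{\oplus mr}$ as an abelian group --- the displayed ``$(\bbZ/p^m\bbZ)^r$'' must be read in that sense, since $G$ is a subgroup of a vector group and hence $p$-torsion.
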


 Assume $\Tors(\NS(X)) = \{0\}$. The images of the 
 divisor classes of a fiber of $f:X\to C$ and of a section $\sfO$ generate a sublattice of $\NS(X)$ isomorphic 
 to the integral hyperbolic plane $\sfU$. It splits as an orthogonal summand of $\NS(X)$. Let 
 $\NS^0(X)$ be its orthogonal complement. The image of the restriction homomorphism 
 $\NS^0(X) \to \Pic(X_K)$ is isomorphic to the group of sections $ \bfU(C)$  and its kernel 
 is the sublattice $\NS_{\fib}^0(X)$ generated by components of fibers not intersection $\sfO$. 
 Let us consider a chain  
 of lattices and the corresponding dual lattices 
 $$\NS_{\fib}^0(X)\subset \NS(X)^0 \subset \NS^0(X)^\vee 
 \subset \NS_{\fib}^0(X)^\vee.$$
 The discriminant group $\NS_{\fib}^0(X)^\vee/\NS_{\fib}^0(X)$ of the lattice $\NS_{\fib}^0(X)$ is isomorphic to the group 
 $\pi_0(\bfU)$ \cite[8.1.2]{Raynaud} and the discriminant group 
 $\NS^0(X)^\vee/\NS^0(X)$ of the lattice  $\NS^0(X)$ is isomorphic to the discriminant group 
$D(\NS(X))$ of $\NS(X)$. This gives us a chain of  finite abelian groups
$$
\bfU(C) \subset \bfU(C)'\subset \pi_0(\bfU) 
$$
with quotients  $\bfU(C)'/\bfU \cong D(\NS(X))$ and $\pi_0(\bfU)/\bfU(C)' \cong \bfU(C).$
 
 Comparing it with exact sequences \eqref{exseqmain1} and \eqref{exseqH1}, we dare to make the 
 following conjecture.
 
 \begin{conjecture} The intersection $H_{\et}^1(C,G)_0$ of the subgroups  
 $H_{\et}^1(C,G)$ and $\pi_0(\bfU)/\bfU(C)$ of 
 $H_{\et}^1(C,\bfU^\circ)$   from assertions (ii) and (iii)  of the Theorem splits the exact sequenc
  $$
  0\to D(\NS(X)) \to \pi_0(\bfU)/\bfU(C) \to \bfU(C) \to 0. 
  $$
 The group $H_{\et}^1(C,\bfU)$ is isomorphic to $H^1(C,\calL)$ and fits in an extension
 $$0\to H_{\et}^1(C,G)/H_{\et}^1(G,G)_0 \to H_{\et}^1(C,\bfU) \to H^1(C,\calL) \to 0$$
 \end{conjecture}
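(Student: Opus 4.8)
The plan is to work throughout inside $H_{\et}^1(C,\bfU^\circ)$, which by parts (ii) and (iii) of Theorem~\ref{shafarevich} is simultaneously an extension of the $\Bbbk$-vector space $H^1(C,\calL)$ by the finite $p$-group $A:=H_{\et}^1(C,G)$ — via the map $\pi_1$ induced by $\bfU^\circ\twoheadrightarrow\bbV(\calL^{\otimes-1})$ — and an extension of $H_{\et}^1(C,\bfU)$ by the finite group $B:=\pi_0(\bfU)/\bfU(C)$ — via $\pi_2$ — with $H_{\et}^1(C,G)_0=A\cap B$ by construction. Then $\pi_1$ restricts to an isomorphism $B/(A\cap B)\overset{\sim}{\longrightarrow}\pi_1(B)\subset H^1(C,\calL)$ and $\pi_2$ to $A/(A\cap B)\overset{\sim}{\longrightarrow}\pi_2(A)\subset H_{\et}^1(C,\bfU)$, with $H_{\et}^1(C,\bfU)/\pi_2(A)\cong H^1(C,\calL)/\pi_1(B)$; so the whole conjecture is equivalent to the single assertion that the subgroup $A\cap B$ of $B$ is a complement of the lattice subgroup $D(\NS(X))$, equivalently that $\pi_1(B)=D(\NS(X))$ as subgroups of $H^1(C,\calL)$. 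I would first make $\pi_1|_B$ concrete. Since $\pi_0(\bfU)$ is a skyscraper sheaf, $H_{\et}^i(C,\pi_0(\bfU))=0$ for $i>0$, so $B$ is the image of the coboundary $\pi_0(\bfU)\to H_{\et}^1(C,\bfU^\circ)$ of $0\to\bfU^\circ\to\bfU\to\pi_0(\bfU)\to 0$; dividing by $G$ and using $\bfU^\circ/G\cong\bbV(\calL^{\otimes-1})$ (from $0\to G\to\bfU^\circ\to\bbV(\calL^{\otimes-1})\to 0$) gives a morphism of extensions onto $0\to\bbV(\calL^{\otimes-1})\to\bfU/G\to\pi_0(\bfU)\to 0$, whence $\pi_1|_B$ is the descent of the coboundary $\delta\colon\pi_0(\bfU)\to H_{\et}^1(C,\bbV(\calL^{\otimes-1}))=H^1(C,\calL)$ of this last sequence. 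Thus $\pi_1(B)$ is the image of $\delta$, and $A\cap B$ is the cokernel of $\bfU(C)\to(\bfU/G)(C)$, i.e. $\ker\delta$ modulo the image of the Mordell--Weil group.

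The essential step is then to match the image and kernel of $\delta$ with the filtration $\bfU(C)\subset\bfU(C)'\subset\pi_0(\bfU)$ recalled before the statement. For this I would use Raynaud's identification (\cite[8.1.2]{Raynaud}) of $\pi_0(\bfU)$ with the discriminant group of the negative-definite lattice $\NS_{\fib}^0(X)$ and of its canonical finite quadratic form, under which $D(\NS(X))$ and $\bfU(C)$ are mutual annihilators, and then trace a component class through the identification of $\bfU^\circ$ with the open part of $X^\sharp$ and through $\delta$, aiming to show that $\delta$ kills precisely the Mordell--Weil part and maps isomorphically onto $D(\NS(X))$. Granting this, the formal reductions above give $B=D(\NS(X))\oplus(A\cap B)$ with $A\cap B\overset{\sim}{\longrightarrow}\bfU(C)$ — the asserted splitting — and the exact sequence $0\to H_{\et}^1(C,G)/H_{\et}^1(C,G)_0\to H_{\et}^1(C,\bfU)\to H^1(C,\calL)/D(\NS(X))\to 0$; under the standing hypothesis $H^0(C,\Lie(\bfU))=0$ the finite group $D(\NS(X))$ embeds into the $\Bbbk$-vector space $H^1(C,\calL)$, so when that space is nonzero the quotient is abstractly isomorphic to $H^1(C,\calL)$ and one recovers the stated form $H_{\et}^1(C,\bfU)\cong H^1(C,\calL)$. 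Promoting this to a canonical isomorphism requires a splitting of $D(\NS(X))\hookrightarrow H^1(C,\calL)$ compatible with the lattice data, which is really part of the claim. As a consistency check, for $g=1$ one has $\bfU=\bfJ$, $\calL=R^1f_*\calO_X$, $H^1(C,\calL)\cong H^2(X,\calO_X)$ of dimension $p_g(X)$, and the expected $A\cap B\cong\bfU(C)$ should be compared with $\Tors(\NS(X))$ and Theorem~\ref{brauer}.

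I expect the splitting to be the main obstacle. Exact sequences~\eqref{exseqmain1} and~\eqref{exseqH1} together with the lattice identities already pin down the orders $|\pi_1(B)|=|D(\NS(X))|$ and $|A\cap B|=|\bfU(C)|$ with no extra input, but this does not yield triviality of the intersection $(A\cap B)\cap D(\NS(X))$ inside $B$; one must rule out ``diagonal'' component classes that are simultaneously killed by $\pi_1$ and lifted from Mordell--Weil data. I anticipate this forces a genuine compatibility between the Artin--Schreier-type computation of $H_{\et}^1(C,G)$ (Lemma~\ref{katz}, Corollary~\ref{katz2}) and the Raynaud discriminant pairing on $\pi_0(\bfU)$ — essentially that $A\cap B$ is isotropic of the maximal admissible size — and it is plausible that the clean conclusion needs the auxiliary hypothesis $\Tors(\NS(X))=0$ together with the induced vanishing $\bfU^\circ(C)=G(C)=\{0\}$, which would explain why the statement is left as a conjecture. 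A further point to settle is the behaviour in degenerate cases, e.g. when $H^1(C,\calL)=0$, where the conjecture forces $D(\NS(X))=\{0\}$ and hence imposes a nontrivial constraint on $\NS(X)$ that would have to be checked separately.
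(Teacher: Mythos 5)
This statement is a \emph{conjecture} in the paper: the author offers no proof, only supporting evidence (explicit verification for rational quasi-elliptic surfaces in characteristic $2,3$ via \cite{CDL}, and Hasse--Witt computations for two quasi-elliptic K3 surfaces, in one of which $\bfU(C)$ is a \emph{proper} subgroup of $H_{\et}^1(C,G)$ and only a partial identification due to Katsura is reported). So there is no paper proof to match your argument against, and your proposal does not close the gap either. Your structural reductions are sound and useful: identifying $B=\pi_0(\bfU)/\bfU(C)$ with the image of the coboundary of $0\to\bfU^\circ\to\bfU\to\pi_0(\bfU)\to 0$, descending $\pi_1|_B$ to the coboundary $\delta$ of $0\to\bbV(\calL^{\otimes -1})\to\bfU/G\to\pi_0(\bfU)\to 0$, and recognizing that everything hinges on how $\delta$ interacts with the filtration $\bfU(C)\subset\bfU(C)'\subset\pi_0(\bfU)$. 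But the decisive step --- that $\delta$ kills exactly the Mordell--Weil part and carries $D(\NS(X))$ isomorphically onto its image --- is introduced with ``Granting this'' and never argued; that \emph{is} the conjecture, restated.

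Two further points would need repair even as a conditional argument. First, your ``equivalently that $\pi_1(B)=D(\NS(X))$'' is not an equivalence: $B/(A\cap B)\cong\pi_1(B)$ being abstractly isomorphic to $D(\NS(X))$ does not produce a direct-sum decomposition $B=D(\NS(X))\oplus(A\cap B)$; you need $\pi_1$ to be injective on the subgroup $D(\NS(X))\subset B$ with image all of $\pi_1(B)$, which is again the unproven splitting (you concede this in your last paragraph, but it undercuts the claimed reduction). Second, the assertion that $|\pi_1(B)|=|D(\NS(X))|$ and $|A\cap B|=|\bfU(C)|$ follow ``with no extra input'' from \eqref{exseqmain1}, \eqref{exseqH1} and the lattice identities is unsupported: those sequences express orders in terms of $H_{\et}^1(C,\bfU)$, whose size is part of what is being conjectured, so even the numerical coincidence requires an argument. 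What your write-up does contribute beyond the paper is the explicit description of $A\cap B$ as $\ker\delta$ modulo the Mordell--Weil image and the observation that the conjecture forces $D(\NS(X))=\{0\}$ whenever $H^1(C,\calL)=0$; these are worthwhile consistency checks, but they do not constitute a proof.
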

 
 Note that $H^1(C,\calL)$ is a vector $p$-torsion group that may contain a finite elementary $p$-group with quotient 
 isomorphic to $H^1(C,\calL)$.
 
 \begin{example} Assume $g = 1, C = \bbP^1$ and $\calL \cong \calO_{\bbP^1}(-k)$. The linear 
 space  
 $H^1(C,\calL^{\otimes p}) \cong H^1(C,\calO_{\bbP^1}(-pk))$ has a natural basis formed by 
 negative Laurent monomials $e_i = t_0^{-i}t_1^{pk-i}, i = 1,\ldots,pk-1$ \cite[III,\S 5]{Hartshorne}.  
 Assume $p = 3$ and let $u^{3}+v+a_{6k}v^3 = 0$  be the equation of $\bfU$, where 
 $a_{6k}\in H^0(C,\calL^{\otimes 6}) = H^0(C,\calO_{\bbP^1}(6k))$ is a binary form  of degree $6k$. Write 
 $a_{6k} = \sum_{i=0}^{6k}c_it_0^{i}t_1^{6k-i}$. Let $A = (c_{ij})$ be a matrix with entries defined by 
  $$\Bigl(\frac{a_{6k}(t_0,t_1)}{t_0^{3i}t_1^{9k-3i}}\Bigr)' = \sum_{j=1}^{3k-1}c_{ij}e_j, i = 1,\ldots,
  3k-1.$$
Here $()'$ means that we eliminate from the  Laurent polynomial all monomials $t_0^it_1^j$ with 
 non-negative $i$, or $j$.  One computes the entries $c_{ij}$ of $A$ and obtains that 
  $$A = \begin{pmatrix}c_{3-1}&c_{6-1}&\cdots&c_{3d-1}\\
c_{3-2}&c_{6-2}&\cdots&c_{3d-2}\\
\vdots&\vdots&\vdots&\vdots\\
c_{3-d}&c_{6-d}&\cdots&c_{3d-d}\end{pmatrix}.
$$ 
 where $c_j = 0, j < 0$ and $d = 3k-1$.  The group 
 $H^1(C,G)$ is isomorphic to $(\bbZ/3\bbZ)^{\oplus r}$, where $r$ is the stable  rank of  
 $A$ (see \eqref{hassewitt}).
  The matrix $A$ coincides with the \emph{Hasse-Witt matrix} 
 that computes the $p$-rank of the 
 hyperelliptic curve $H$ of genus $d$ given by equation $t_2^2+a_{6k}(t_0,t_1) = 0$ 
 (i.e. the maximal $r$ such that $(\bbZ/p\bbZ)^r$ embeds in its Jacobian). Of course, in 
 our case, the polynomial $a_{6k}(t_0,t_1)$ may degenerate and does not define any hyperelliptic curve. 
 Note that the projection $\pi:H\to \bbP^1$ is a separable double cover ramified over $V(a_6)$ that gives
 and exact sequence
 $$0\to \calO_{\bbP^1}\to \pi_*\calO_H\to \calO_{\bbP^1}(-3k) \to 0$$
 and an isomorphism $H^1(H,\calO_H) = H^1(\bbP^1,\pi_*\calO_H) \cong H^1(\bbP^1,\calO_{\bbP^1}(-3k)).$ The 
 matrix $A$  describes the action of the Frobenius on the basis $(e_1,\ldots,e_{3k-1})$ of $H^1(H,\calO_H)$.

Assume $k = 1$. The surface $X$ is a rational 
 quasi-elliptic surface with a section. The classification of such surfaces is known, 
in particular, the group 
 $\bfU(C)$ is known in each case (see \cite[4.9]{CDL} for exposition of this classification and the references to the 
 original results). 
The conjecture is checked in characteristic $p = 2,3$ by explicit computation of the group 
$H^1(C,G)$ 
(see \cite[4.8]{CDL}). Note that in this case $D(\NS(X)) = \{0\}$ and $H^1(C,\calL) = 0$, so 
that the Tate-Shafarevich group $H_{\et}^1(C,\bfU)$ is trivial with agreement  with Theorem \ref{brauer}.

Under the same assumption, but taking $\calL = \calO_{\bbP^1}(-2)$, we get a quasi-elliptic 
K3 surface $f:X\to \bbP^1$. In this case Theorem \ref{brauer} gives that 
$H_{\et}^1(C,\bfU) \cong H^1(C,\calL) \cong \bbG_a(\Bbbk).$

The following two examples were suggested to me by T. Katsura.  It is known that the Fermat quartic surface
$x^4+y^4+z^4+w^4 = 0$ in characteristic $3$ is a supersingular K3 surface with Artin invariant $\sigma$ 
equal to $1$. It admits a quasi-elliptic fibration with Weierstrass equation 
$y^2+x^3+t_0^2t_1^2(t_0^8+t_1^{8}) = 0$ \cite{Katsura}. The Russell equation of $\bfU^\circ$ is 
$$u^3+v+t_0^2t_1^2(t_0^8+t_1^{8})v^3 = 0.$$
The quasi-elliptic fibration has 10 reducible fibers of Kodaira's type IV with 
$\pi_0(\bfU)\cong (\bbZ/3\bbZ)^{\oplus 10}$ and the Mordell-Weil group 
$\bfU(C)$ is isomorphic to $(\bbZ/3\bbZ)^{\oplus 4}$. The discriminant  group $D(\NS(S))$ is isomorphic to 
$(\bbZ/3\bbZ)^{\oplus 2}$. We compute the Hasse-Witt matrix $A$  and find 
that $H^1(C,G)\cong (\bbZ/3\bbZ)^{\oplus 4} \cong \bfU(C).$ 

However, if we take the K3 surface  given by Weierstrass equation 
$$y^2+x^3+t_0^2t_1^{10}+t_0^5t_1^7+t_0^8t_1^4+t_0^{10}t_1^2 = 0,$$
we obtain that its  Mordell-Weil group $\bfU(C)$ is  an elementary $3$-group of rank $2$, 
the quasi-elliptic fibration contains 10 
reducible fibers of type $IV$. Thus its Artin invariant $\sigma$ is equal to $3$, so that $D(\NS(S))$ is 
an elementary $3$-group of 
rank $6$ and $\pi_0(\bfU)$ is an elementary $3$-group of rank $10$. Computing the Hasse-Witt matrix $A$ we 
find that its stable rank  equal to   $4$. Thus 
$\bfU(C) \cong (\bbZ/3\bbZ)^{\oplus 2}$ is isomorphic to 
a proper subgroup of $H_{\et}^1(C,G) \cong (\bbZ/3\bbZ)^{\oplus 4}$. Katsura finds an explicit isomorphism 
from a certain subgroup $H_{\et}^1(C,G)'$ of $H_{\et}^1(C,G)$ to $\bfU(C)$.

\end{example}


\end{document}